\def\url@leostyle{%
  \@ifundefined{selectfont}{\def\UrlFont{\sf}}{\def\UrlFont{\small\ttfamily}}}
\numberwithin{equation}{section}
\theoremstyle{definition}
\newtheorem{prop}{Proposition}[section]
\newtheorem{theorem}[prop]{Theorem}
\newtheorem{lemma}[prop]{Lemma}
\newtheorem{cor}[prop]{Corollary}
\newtheorem{corollary}[prop]{Corollary}
\newtheorem{defn}[prop]{Definition}
\newtheorem{example}[prop]{Example}
\newtheorem{notation}[prop]{Notation}
\newtheorem{remark}[prop]{Remark}
\newtheorem{image}[prop]{Figure} 
\newcommand{\nc}{\newcommand}
\nc{\DMO}{\DeclareMathOperator}	
\nc{\newnotation}{\nomenclature}
\nc{\wrap}{\cW}
\nc{\Cob}{\mathsf{Cob}}
\nc{\mul}{\mathsf{Mul}}
\nc{\fat}{\mathsf{fat}}
\nc{\cob}{\mathsf{Cob}}
\nc{\coh}{\mathsf{Coh}}
\nc{\idem}{\mathsf{Idem}}
\nc{\sets}{\mathsf{Sets}}
\nc{\near}{\mathsf{near}}
\nc{\sing}{\mathsf{Sing}}
\nc{\symp}{\mathsf{Symp}}
\nc{\perf}{\mathsf{Perf}}
\nc{\ssets}{\mathsf{sSets}}
\nc{\cmpct}{\mathsf{cmpct}}
\nc{\compact}{\mathsf{cmpct}}
\nc{\pwrap}{\mathsf{PWrap}}
\nc{\coder}{\mathsf{Coder}}
\nc{\bimod}{\mathsf{Bimod}}
\nc{\grmod}{\mathsf{GrMod}}
\nc{\spaces}{\mathsf{Spaces}}
\nc{\pwrms}{\mathsf{PWrFuk}_{M,S}}
\nc{\pwrmf}{\mathsf{PWrFuk}_{M,F}}
\nc{\pwrapmf}{\mathsf{PWrFuk}_{M,F}}
\nc{\fuk}{\mathsf{Fukaya}}
\nc{\infwr}{\mathsf{InfWr}}
\nc{\fukaya}{\mathsf{Fukaya}}
\nc{\autml}{\mathsf{Aut}_{M,\Lambda}}
\nc{\fukml}{\mathsf{Fukaya}_{M,\Lambda}}
\nc{\fukmle}{\mathsf{Fukaya}_{M,\Lambda,\epsilon}}
\nc{\fukmod}{\wrfukcompact(M)\modules}
\nc{\lag}{\mathsf{Lag}}
\nc{\lagm}{\lag_M}
\nc{\lago}{\lag^o}
\nc{\lagml}{\lag_{M,\Lambda}} 
\nc{\lagmle}{\lag_{M,\Lambda,\epsilon}}
\nc{\fun}{\mathsf{Fun}}
\nc{\vect}{\mathsf{Vect}}
\nc{\chain}{\mathsf{Chain}}
\nc{\wrfuk}{\mathsf{WrFukaya}}
\nc{\wrfukcompact}{\mathsf{WrFukaya}_{\mathsf{cmpct}}}
\nc{\pwrfuk}{\mathsf{PWrFukaya}}
\nc{\inffuk}{\mathsf{InfFuk}}
\nc{\pwrfukml}{\mathsf{PWrFukaya}_{M,\Lambda}}
\nc{\inffukml}{\mathsf{InfFuk}_{M,\Lambda}}
\nc{\nattrans}{\mathsf{NatTrans}}
\nc{\corres}{\mathsf{Corres}}
\nc{\fukep}{\fukaya_\Lambda(M,\epsilon)}
\nc{\fukepop}{\fukaya_\Lambda(M,\epsilon)^{\op}}
\nc{\lagep}{\lag_\Lambda(M,\epsilon)}
\DMO{\cyl}{cyl} 
\nc{\dbcoh}{D^b\mathsf{Coh}}
\nc{\corr}{\mathsf{Corr}}
\nc{\cat}{\mathsf{Cat}}
\nc{\Cat}{\mathsf{Cat}}
\nc{\ainfty}{\mathsf{A}_\infty}
\nc{\inftycat}{\mathcal{C}\!\operatorname{at}_\infty}
\nc{\Ainftycat}{\mathcal{C}\!\operatorname{at}_{A_\infty}}
\nc{\ainftycat}{\mathcal{C}\!\operatorname{at}_{A_\infty}}
\nc{\stablecat}{\mathcal{C}\!\operatorname{at}_\infty^{\Ex}}
\DMO{\im}{im}
\DMO{\ev}{ev}
\DMO{\inj}{inj}
\DMO{\fib}{fib}
\DMO{\conf}{Conf}
\DMO{\chains}{Chains}
\DMO{\cochains}{Cochains}
\DMO{\cone}{Cone}
\DMO{\ran}{Ran}
\DMO{\rot}{Rot}
\DMO{\leg}{Leg}
\DMO{\imm}{imm}
\DMO{\adj}{adj}
\DMO{\cube}{Cube}
\DMO{\deep}{deep}
\DMO{\back}{back}
\DMO{\front}{front}
\DMO{\flow}{Flow}
\DMO{\floer}{Floer}
\DMO{\maps}{Maps}
\DMO{\exact}{exact}
\DMO{\Decomp}{Decomp}
\DMO{\decomp}{Decomp}
\DMO{\collar}{collar}
\DMO{\yoneda}{Yoneda}
\DMO{\hamspace}{Ham}
\DMO{\sympspace}{Symp}
\DMO{\holomaps}{Holomaps}
\DMO{\comp}{Comp}
\DMO{\crit}{Crit}
\DMO{\test}{{test}}
\DMO{\sign}{sign}
\DMO{\topp}{top}
\DMO{\indx}{Index}
\DMO{\Break}{Break} 
\DMO{\zero}{zero} 
\DMO{\ob}{Ob}
\DMO{\gr}{Gr} 
\DMO{\Gr}{Gr} 
\DMO{\cl}{Cl} 
\DMO{\grlag}{GrLag}
\DMO{\Pin}{Pin}
\DMO{\Graph}{Graph}
\DMO{\pin}{Pin}
\DMO{\gap}{Gap}
\DMO{\Ex}{Ex}
\DMO{\id}{id}
\DMO{\End}{End}
\DMO{\sym}{Sym} 
\DMO{\aut}{Aut}
\DMO{\DK}{DK} 
\DMO{\poly}{poly} 
\DMO{\diff}{Diff}
\DMO{\coll}{coll}
\DMO{\dist}{dist} 
\DMO{\coker}{coker} 
\nc{\kernel}{\ker} 
\DMO{\sspan}{span}
\DMO{\hocolim}{hocolim}	
\DMO{\holim}{holim}
\DMO{\sk}{sk}
\DMO{\ho}{ho}
\DMO{\fin}{fin}
\DMO{\ret}{Ret}
\DMO{\ham}{Ham}
\DMO{\con}{con}
\DMO{\leaf}{leaf}
\DMO{\supp}{supp}
\DMO{\edge}{edge}
\DMO{\colim}{colim}
\DMO{\edges}{edges}
\DMO{\Image}{image}
\DMO{\roots}{roots}
\DMO{\height}{height}
\DMO{\finmod}{FinMod}
\DMO{\leaves}{leaves}
\DMO{\planar}{planar}
\DMO{\vertices}{vertices}
\nc{\lagg}{\lag^{\cG}}
\nc{\iso}{\mathsf{Iso}}
\nc{\Set}{\mathsf{Set}}
\nc{\ass}{\mathsf{ \bf Ass}}
\nc{\Mod}{\mathsf{Mod}}
\nc{\modules}{\mathsf{Mod}}
\nc{\sset}{\mathsf{sSet}}
\nc{\liou}{\mathsf{Liou}}
\nc{\poset}{\mathsf{Poset}}
\nc{\trno}{T^*\RR^n_{\geq 0}}
\nc{\spectra}{\mathsf{Spectra}}
\nc{\tensorfin}{\tensor^{\fin}}
\nc{\lagptg}{\lag_{pt,pt}^{\cG}}
\nc{\Fin}{\mathcal{F}\mathsf{in}}
\nc{\lagnl}{\lag_{N,\Lambda}}
\nc{\lagmlg}{\lag_{M,\Lambda}^{\cG}}
\nc{\lagsplit}{\lag^{\mathsf{split}}}
\nc{\lagktimes}{(\lag^{\dd k})^\times}
\nc{\lagplanar}{\lag^{\times,\planar}}
\nc{\smsh}{\wedge}
\nc{\un}{\underline}
\nc{\xto}{\xrightarrow}
\nc{\xra}{\xto}
\nc{\tensor}{\otimes}
\nc{\del}{\partial}
\nc{\dd}{\diamond}
\nc{\tri}{\triangle}
\nc{\bb}{\Box}
\nc{\into}{\hookrightarrow}
\nc{\onto}{\twoheadrightarrow}
\nc{\contains}{\supset}
\nc{\transverse}{\pitchfork}
\nc{\uncirc}{\underline{\circ}}
\nc{\Jbar}{\overline{J}}
\nc{\Fbar}{\overline{F}}
\nc{\delbar}{\overline{\del}}
\nc{\thetabar}{\overline{\theta}}
\nc{\omegabar}{\overline{\omega}}
\nc{\colldiff}{\diff^{\del}} 
\nc{\trbar}{\overline{T^*\RR}}
\nc{\tr}{T^*\RR}
\nc{\tsa}{Ts\cA}
\nc{\tsb}{Ts\cB}
\nc{\cmbar}{\overline{\cM}}
\nc{\crbar}{\overline{\cR}}
\nc{\vece}{ {\vec \epsilon}}	
\nc{\vecd}{ {\vec \delta}}
\nc{\ov}{\overline}
\DMO{\op}{op}
\nc{\opp}{ ^{\op}}
\nc{\hiro}{\textcolor{blue}}
\nc{\eqn}{\begin{equation}}
\nc{\eqnn}{\begin{equation}\nonumber}
\nc{\eqnd}{\end{equation}}
\nc{\enum}{\begin{enumerate}}
\nc{\enumd}{\end{enumerate}}
\def\cA{\mathcal A}\def\cB{\mathcal B}\def\cC{\mathcal C}
\def\cG{\mathcal G}
\def\cL{\mathcal L}
\def\cM{\mathcal M}
\def\cR{\mathcal R}
\def\cW{\mathcal W}
\def\RR{\mathbb R}
\def\ZZ{\mathbb Z}
\def\sD{\mathsf D}
\title{The Fukaya category pairs with Lagrangian cobordisms exactly}
\author{Hiro Lee Tanaka}
\begin{document}

\maketitle

\begin{abstract}
We prove that a pairing between the Fukaya category and the $\infty$-category of Lagrangian cobordisms respects mapping cones. This is another step toward constructing a lift of Fukaya categories to the level of spectra (in the sense of stable homotopy theory). As corollaries, we show that the map in~\cite{tanaka-pairing} from cobordism groups to Floer cohomology lifts to the level of spectra, and one also recovers some results of Biran and Cornea for what we call ``vertically collared'' cobordisms.
\end{abstract}

\tableofcontents

\section{Introduction}
Fix $M$ a suitably convex symplectic manifold. 
In~\cite{tanaka-pairing}, we constructed a functor
	\eqnn
		\Xi: \lag_\Lambda(M) \to \wrap_\Lambda(M)\Mod.		
	\eqnd
The domain is an $\infty$-category build of Lagrangian branes and their cobordisms. The target is the $\infty$-category of modules over a Fukaya category. The main result of the present work is:

\begin{theorem}\label{theorem}
$\Xi$ is an exact functor. That is, it respects zero objects and sends fiber sequences to fiber sequences.
\end{theorem}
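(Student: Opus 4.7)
Since both $\lag_\Lambda(M)$ and $\wrap_\Lambda(M)\Mod$ are stable $\infty$-categories, exactness of $\Xi$ is equivalent to preserving the zero object and preserving cofiber sequences; I plan to verify these separately. For the zero object: $\lag_\Lambda(M)$ has the empty Lagrangian $\emptyset$ as its zero, and the construction of $\Xi$ in~\cite{tanaka-pairing} assigns to a brane $L$ a module built from counts of pseudo-holomorphic strips with boundary on $L$ and on the test objects generating $\wrap_\Lambda(M)$. No such strips exist when $L = \emptyset$, so $\Xi(\emptyset)$ is the zero module.

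The main content is preservation of cofiber sequences. In $\lag_\Lambda(M)$ these are realized geometrically by Lagrangian cobordisms $V$ in a suitable stabilization of $M$, whose ends supply the vertices of the triangle and whose interior supplies the filling data. The plan is to produce, for each such $V$, a cofiber square in $\wrap_\Lambda(M)\Mod$ whose vertices are $\Xi$ of the ends of $V$ and whose filling is determined by Floer-theoretic data on $V$. Concretely, I would slice the ambient $\RR^2$ direction by a height parameter transverse to the ends of $V$ and study the family of modules that $\Xi$ produces as a reference test configuration crosses each slicing height; between two such heights the module changes by a mapping cone whose connecting map counts Floer strips through the body of $V$, and whose cofiber can be identified, by a collar-expansion argument, with $\Xi$ of the end crossed. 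This is an $\infty$-categorical enhancement of the Biran--Cornea cone-decomposition picture, cast inside modules over the wrapped Fukaya category rather than inside the Fukaya category itself.

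The main obstacle is producing the filling $2$-simplex of the cofiber square, as opposed to merely an exact triangle on cohomology, and this is where the $\infty$-categorical coherence of $\Xi$ from~\cite{tanaka-pairing} is essential. Two subtleties have to be controlled. First, the continuation maps arising as one moves the reference configuration across slicing heights must assemble into a single coherent morphism of $\wrap_\Lambda(M)$-modules, which I expect follows formally from the $\infty$-functoriality already established for $\Xi$. Second, the cofiber of this morphism must be identified up to canonical quasi-isomorphism with $\Xi$ of the corresponding end; this step carries the genuinely new analytic input, most naturally via a neck-stretching / collar-expansion argument that degenerates Floer strips in the body of $V$ to strips in an end, so that the strip counts defining the cofiber agree, at the chain level and then coherently, with those defining $\Xi$ of the end. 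Granted these two ingredients, the filling $2$-simplex is assembled by the coherence machinery of~\cite{tanaka-pairing}, completing the verification that $\Xi$ sends cofiber sequences to cofiber sequences.
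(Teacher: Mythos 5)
Your zero-object step matches the paper's Lemma~\ref{lemma.zero} and is fine. The gap is in the main step. Exactness is a statement about diagrams: you must show that $\Xi$ applied to a chosen (co)fiber \emph{square} in $\lag_\Lambda(M)$ --- which, by the construction of~\cite{nadler-tanaka}, consists of the kernel/cone brane together with the morphism $j$, the zero maps, and the two specific higher cobordisms $T_1,T_2$ filling the triangles --- is a pullback/pushout square in $\wrap_\Lambda(M)\Mod$. Your slicing-by-height plan instead manufactures \emph{some} cone decomposition in modules out of a single cobordism $V$ (essentially the Biran--Cornea picture enhanced to modules), and never connects that square to the image under $\Xi$ of the square in $\lag$, i.e.\ to the actual 2-simplices $\Xi(T_1)$, $\Xi(T_2)$. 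The closing claim that ``the filling $2$-simplex is assembled by the coherence machinery of~\cite{tanaka-pairing}'' is precisely the content that needs proof and is not formal: the functoriality of $\Xi$ gives you $\Xi$ of simplices of $\lag$, not a coherent identification of your sliced continuation maps with those simplices, so your subtlety (a) is unsupported rather than automatic; and your subtlety (b) defers the identification of the cofiber with $\Xi$ of an end to an unperformed neck-stretching argument.

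For comparison, the paper's route is: pair $B(T_1)$ and $B(T_2)$ against test configurations, use the Lagrangian suspension of a compactly supported isotopy $\beta\to\gamma$ so the continuation map is lower-triangular (directionality/boundary-stripping), identify $\Xi(T_1)(X)$ with the standard kernel triangle for $f=\Xi(Y)_X$ (Lemma~\ref{lemma.T1-equivalence-1}) and $\Xi(T_2)(X)$ with a triangle on an auxiliary complex $I_X$ (Lemma~\ref{lemma.T2-equivalence}), compare $I_X$ with $\ker f$ by an explicit algebraic equivalence $\eta$ (Lemma~\ref{lemma.T1-equivalence-2}), and then --- the crux your proposal has no analogue of --- verify that the two comparison prisms share a common rectangle by identifying specific strip counts in collared regions ($\phi_{21}=\Phi_{31}$, $\phi_{10}=\Phi_{10}$, $P\circ\eta\circ\Phi_{00}=P\circ\phi_{00}$) so the prisms glue to an equivalence of squares (Lemma~\ref{lemma.rectangles-glue}), and finally check the resulting square is a pullback by pulling back along $\eta$ (Lemma~\ref{lemma.I-pullback}). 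Without an argument establishing this compatibility between the geometric fillings and the algebraic kernel diagram --- not merely the existence of an exact triangle for each cobordism --- the proposed proof does not establish exactness of $\Xi$.
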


We briefly recall the $\infty$-categories involved in this functor.

The domain of the functor, $\lag_\Lambda(M)$, is an $\infty$-category whose objects are branes in $M$, or in $M \times T^*E^n$ for some Euclidean space $E^n$. Its morphisms are Lagrangian cobordisms between objects, and its higher cobordisms are higher-dimensional Lagrangian cobordisms. Importantly, one has the freedom to choose a subset $\Lambda \subset M$ with respect to which all cobordisms of $\lag_\Lambda(M)$ must be non-characteristic.  The functor $\Xi$ applies to the case where $\Lambda$ equals the skeleton of $M$, or the case where $\Lambda$ equals $M$ itself.\footnote{
It is anticipated that $\Xi$ exists for other choices of $\Lambda$, when $\wrap_\Lambda$ would represent a {\em partially wrapped} Fukaya category. } In either case, $\lag_\Lambda(M)$ is an $\infty$-category which does not use any Floer theory in its definition.

Now we explain the target. When $\Lambda$ is the skeleton of $M$, $\wrap_\Lambda(M)$ is the wrapped Fukaya category of $M$. When $\Lambda$ is $M$ itself, it is the full subcategory of those objects which are compact Lagrangian submanifolds of $M$. The notation $\wrap\Mod$ denotes the $\infty$-category of contravariant $A_\infty$-functors from $\wrap_\Lambda(M)$ to cochain complexes. When $Y \subset M$ is a brane in one of these Fukaya categories, it is also an object of $\lag_\Lambda(M)$, and $\Xi(Y)$ is the module represented by $Y$.

One consequence of the existence of $\Xi$ is that certain Lagrangian cobordisms induce equivalences in the Fukaya category; moreover, homotopy groups of spaces of Lagrangian cobordisms are detected in Floer cohomology groups. This paper develops our study of $\Xi$ through the proof of its exactness, in particular showing that these group maps arise as maps of homotopy groups of spectra.

What do we mean by this? We proved in~\cite{nadler-tanaka} that $\lag_\Lambda(M)$ is a stable $\infty$-category for any $\Lambda$.\footnote{One can think of {\em stability} for an $\infty$-category as a generalization of being {\em pretriangulated} for a dg- or $A_\infty$-category. For example, the homotopy category of any stable $\infty$-category is triangulated. The generalization allows one to deal with $\infty$-categories that are not necessarily linear over $\ZZ$.} As a consequence, it has mapping cones, direct sums, and shift functors. Moreover, the recipient of $\Xi$---the category of $A_\infty$-modules over $\wrap$ or $\wrap^\compact$---is also a stable $\infty$-category because it is (the nerve of) a pretriangulated dg-category. Hence it is natural to ask whether $\Xi$ respects the stability of both the domain and codomain. Our main result states that it does.

\begin{remark}
Note that the stability of each $\infty$-category arises in different ways. For $\wrap\Mod$, stability is a formal consequence of $\chain$ being a stable $\infty$-category. Put another way, the Fukaya category is formally stabilized by exploiting the fact that it is an $A_\infty$-category (i.e., enriched over chain complexes, which already form a stable $\infty$-category).\footnote{We mean ``enrichment'' in the sense of $\infty$-categories, as developed by~\cite{gepner-haugseng}.} As an illustration of how formal the construction is: Given a degree 0, closed morphism in the Fukaya category, it is not always possible to give an easy geometric interpretation of the mapping cone of this morphism.\footnote{A notable example is when the morphism is a unique intersection point, in which case Lagrangian surgery represents a mapping cone.}

On the other hand, for $\lag$, stability is a {\em geometric} consequence; one does not formally enlarge the geometric objects through algebraic means. This is similar to the way that one naturally finds rich structure on Thom spaces---they allow for suspension-loop maps which make them into infinite loop spaces. One category-level higher from classical Thom spaces, the geometry involved in the $\infty$-category of Lagrangian cobordisms naturally implies that $\lag$ is stable. To contrast this situation with the example of the previous paragraph: Given any cobordism, there is a natural, geometrically defined object which is the mapping cone of that cobordism. (See Section~\ref{section.lag-kernels}.)
\end{remark}

\subsection{Applications}
We present corollaries in increasing order of geometric application. The first follows from general nonsense about exact functors:

\begin{cor}
$\Xi$ preserves all finite limits and colimits.
\end{cor}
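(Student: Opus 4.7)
The plan is to invoke a standard fact from the theory of stable $\infty$-categories: any exact functor between stable $\infty$-categories automatically preserves all finite limits and all finite colimits. The main theorem already supplies exactness of $\Xi$, so the only remaining ingredient is stability of source and target. Stability of $\lag_\Lambda(M)$ was established in~\cite{nadler-tanaka}, and stability of $\wrap_\Lambda(M)\Mod$ is immediate because it is (the nerve of) a pretriangulated dg-category, hence an $\infty$-category enriched over the stable $\infty$-category of chain complexes.

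The key steps, in order, are as follows. First, unpack the definition of exactness to record that $\Xi$ preserves zero objects and sends fiber sequences to fiber sequences. Second, recall that in any stable $\infty$-category a commuting square is a pushout if and only if it is a pullback, so that preservation of fiber sequences by $\Xi$ automatically implies preservation of cofiber sequences, and of pushout and pullback squares. Third, appeal to the fact that every finite colimit in a stable $\infty$-category can be constructed by iterated pushouts starting from the zero object, and dually for finite limits. Combining these observations, $\Xi$ preserves all finite colimits, and dually all finite limits.

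In practice the entire argument would amount to citing the appropriate result from Lurie's \emph{Higher Algebra}. There is no substantive obstacle: once the main theorem is in hand, this corollary is formal nonsense about exact functors and stable $\infty$-categories. The one bookkeeping point worth noting is merely to confirm that the notions of ``exact functor'' and ``stable $\infty$-category'' used here coincide with those in the cited references, which follows from our setup.
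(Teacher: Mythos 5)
Your proposal is correct and matches the paper, which offers no proof beyond noting that the corollary ``follows from general nonsense about exact functors'': since both $\lag_\Lambda(M)$ and $\wrap_\Lambda(M)\Mod$ are stable, exactness of $\Xi$ formally yields preservation of all finite limits and colimits, exactly as you argue via the standard results in Lurie's \emph{Higher Algebra}.
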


Out of an $\infty$-category $\cC$, one can take $\pi_0$ of all its $\hom$ spaces to construct the homotopy category $ho \cC$.\footnote{If the stable $\infty$-category is linear over $\ZZ$, $ho \cC$ is the same as the ``0th cohomology'' category, often denoted $H^0 \cC$. } It is an ordinary category, and when $\cC$ is stable, $ho \cC$ can be given a triangulated structure as follows: One declares a triangle to be distinguished if a lift in the original $\infty$-category comes from a fiber sequence. Since fiber sequences are respected by $\Xi$, we have:

\begin{cor}
The induced map of homotopy categories
	\eqnn
		ho\Xi: ho\lag_\Lambda(M) \to ho\wrap_\Lambda(M)\Mod
	\eqnd
respects distinguished triangles. (Here, $\Lambda$ equals either $\sk(M)$ or $M$.)
\end{cor}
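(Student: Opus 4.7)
This corollary reduces directly to Theorem~\ref{theorem}. The plan is to unwind the construction of the triangulated structure on $ho\cC$ for a stable $\infty$-category $\cC$: by definition, a triangle $X \to Y \to Z \to X[1]$ in $ho\cC$ is distinguished exactly when it admits a lift to a fiber sequence in $\cC$ itself (equivalently, since $\cC$ is stable, a cofiber sequence). Both $\lag_\Lambda(M)$ and $\wrap_\Lambda(M)\Mod$ are stable, so both homotopy categories carry such a triangulated structure, and $ho\Xi$ makes sense as an ordinary functor between them, obtained by applying $\Xi$ on objects and $\pi_0$ on hom spaces; functoriality is automatic from the functoriality of $\Xi$.

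Given a distinguished triangle in $ho\lag_\Lambda(M)$, I would choose a lift to a fiber sequence $X \to Y \to Z$ in $\lag_\Lambda(M)$ and apply $\Xi$. By Theorem~\ref{theorem}, the resulting sequence $\Xi(X) \to \Xi(Y) \to \Xi(Z)$ is again a fiber sequence, now in $\wrap_\Lambda(M)\Mod$. Projecting back down to homotopy categories, this witnesses the image of the original triangle under $ho\Xi$ as a distinguished triangle in $ho\wrap_\Lambda(M)\Mod$. The connecting morphism $Z \to X[1]$ lands where it should, because $\Xi$ preserves zero objects and cofiber sequences and therefore commutes (up to canonical equivalence) with the shift functor: for any object $X$, the fiber sequence $X \to 0 \to X[1]$ is sent to a fiber sequence $\Xi(X) \to 0 \to \Xi(X)[1]$, which exhibits the required identification $\Xi(X[1]) \simeq \Xi(X)[1]$.

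The only ``step'' of substance here is Theorem~\ref{theorem}; everything else is formal from the defining axioms of stable $\infty$-categories and their triangulated homotopy categories, so there is no real obstacle. All of the geometric and analytic difficulty has been absorbed into the exactness statement, and the passage to $ho\Xi$ simply discards higher-homotopical information without introducing any new content.
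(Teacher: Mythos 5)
Your proposal is correct and matches the paper's reasoning: the corollary is deduced immediately from Theorem~\ref{theorem} together with the definition of distinguished triangles in $ho\cC$ as those admitting lifts to fiber sequences, which is exactly the paper's (one-line) argument. Your additional remark that exactness forces $\Xi(X[1]) \simeq \Xi(X)[1]$ is a standard formal point and is consistent with the paper's treatment.
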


Roughly speaking, we know that the coproduct of two branes in the Fukaya category ought to be their disjoint union. But not every pair of branes in $M$ admits a disjoint embedding in $M$---for instance, if the branes are compact, a necessary condition to admitting a disjoint embedding is that their Floer complex be equivalent to zero. 
However, in $\lag_\Lambda(M)$, the coproduct of branes {\em is} given by their disjoint union: By virtue of stabilizing, branes that could not be disjoint in $M$ can easily be made disjoint in $M \times T^*E^n$.\footnote{Note here that the hom between two disjoint branes in $\lag$ is {\em not} typically zero because of the global nature of cobordisms---one can find cobordisms between far-apart branes. This is one sense in which $\lag$ is similar to $\wrap$: Wrapping also permits morphisms between far-away, non-compact objects. } The next corollary shows that disjoint union of branes---which cannot always be realized in an unstabilized setting---indeed plays the role of coproduct in Fukaya categories.

\begin{cor}
Disjoint unions of stabilized branes induce direct sums of modules over the Fukaya category.
\end{cor}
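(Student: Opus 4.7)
The plan is to reduce the statement to formal consequences of exactness. By the preceding corollary, $\Xi$ preserves finite coproducts. Since $\wrap_\Lambda(M)\Mod$ is a stable $\infty$-category, its finite coproducts are direct sums (biproducts). Therefore it suffices to prove that, in the stable $\infty$-category $\lag_\Lambda(M)$, the disjoint union of two stabilized branes represents their coproduct.

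To establish that $L_1 \sqcup L_2$ is the coproduct of $L_1$ and $L_2$ in $\lag_\Lambda(M)$, I would construct inclusion morphisms $i_j \colon L_j \to L_1 \sqcup L_2$ by exploiting the extra directions afforded by stabilization: after embedding $L_1$ and $L_2$ disjointly in $M \times T^*E^n$, one can write each $i_j$ as a cobordism that is cylindrical on $L_j$ and pairs it with a ``birth'' cobordism $\emptyset \to L_{3-j}$ (the empty Lagrangian represents a zero object after stabilization, as developed in the kernel construction of Section~\ref{section.lag-kernels}). I would then verify the universal property by showing that a cobordism $L_1 \sqcup L_2 \to N$, which in the disjoint setting decomposes along its two incoming components, is canonically determined by the pair of restrictions to $L_1$ and $L_2$. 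More efficiently in the stable setting, I would exhibit projection cobordisms $p_j \colon L_1 \sqcup L_2 \to L_j$ (constructed symmetrically, by pairing a cylinder on $L_j$ with a ``death'' cobordism $L_{3-j} \to \emptyset$) satisfying $p_j \circ i_k \simeq \delta_{jk}$ and invoke the biproduct recognition principle available in any stable $\infty$-category.

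With the identification $L_1 \sqcup L_2 \simeq L_1 \oplus L_2$ in $\lag_\Lambda(M)$ in hand, applying $\Xi$ and invoking the preceding corollary yields
\[
\Xi(L_1 \sqcup L_2) \;\simeq\; \Xi(L_1) \oplus \Xi(L_2)
\]
in $\wrap_\Lambda(M)\Mod$, which is the assertion.

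The hard part is the geometric step: verifying that disjoint union represents the biproduct in $\lag_\Lambda(M)$. The inclusion and projection cobordisms must be constructed compatibly with the non-characteristic condition with respect to $\Lambda$, and the biproduct identities $p_j \circ i_k \simeq \delta_{jk}$ must hold up to the coherent higher homotopies demanded by the $\infty$-categorical universal property rather than only at the level of the homotopy category. Once this geometric identification is in place, the corollary is purely formal from the main theorem.
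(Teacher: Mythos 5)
Your overall reduction---$\Xi$ preserves finite coproducts by the preceding corollary, so it suffices to identify the disjoint union as the coproduct in $\lag$---matches the paper's starting point, but the route you propose for that identification has two concrete gaps. First, the biproduct recognition you invoke needs more than $p_j \circ i_k \simeq \delta_{jk}$: in the (additive) homotopy category those identities only exhibit $L_1 \oplus L_2$ as a retract of $L_1 \sqcup L_2$; you also need $i_1 p_1 + i_2 p_2 \simeq \id_{L_1 \sqcup L_2}$, and that is precisely the identity whose geometric verification is nontrivial (the composite $i_j p_j$ carries a floating ``birth-then-death'' component on the other brane, which must be shown removable up to homotopy, compatibly with the $\Lambda$-non-characteristic condition). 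You never state this condition, so the recognition principle as written does not apply. Second, your alternative check of the universal property---that a cobordism out of $L_1 \sqcup L_2$ ``decomposes along its two incoming components''---is false in general: a single connected cobordism can be collared by both $L_1$ and $L_2$ at its incoming end, so morphisms out of a disjoint union do not split, and $\hom(L_1 \sqcup L_2, N) \simeq \hom(L_1,N) \times \hom(L_2,N)$ cannot be read off componentwise. So the ``hard part'' you defer is genuinely open in your write-up, not a routine verification.

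The paper avoids all of this by using the stable structure of $\lag$ already built in~\cite{nadler-tanaka}: the geometric cone construction (Section~\ref{section.lag-kernels}), applied to the zero morphism $Y_0[-1] \to Y_1$ (with zero morphisms realized as in Example~\ref{example.zero}), visibly produces the disjoint union $Y_0 \times E^\vee \coprod Y_1 \times E^\vee$, while by general nonsense the cone of a zero map is the coproduct $Y_0 \oplus Y_1$. Combined with $Y_i \times E^\vee \simeq Y_i$ in $\lag$ and the fact that $\Xi$ preserves coproducts, the corollary is immediate, with no new inclusion/projection cobordisms to construct or coherence to verify. If you want to keep your framework, the cleanest repair is to replace your hand-built biproduct data with this identification: recognize the disjoint union as the cone of a zero map via the existing cone construction, and let the categorical universal property do the rest.
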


\begin{proof}
In $\lag$, the mapping cone of a zero map $Y_0[-1] \to Y_1$ is equivalent to the disjoint union $Y_0\times E^\vee \coprod Y_1 \times E^\vee$. On the other hand, the cone of this zero map is always the coproduct of $Y_0$ with $Y_1$ by categorical nonsense. Now note that $\Xi$ preserves coproducts, and that $Y_i \times E^\vee \simeq Y_i$ in $\lag$.
\end{proof}

The next corollary gives another reason why Lagrangian cobordisms---compact or not---detect mapping cones in the Fukaya category. One should, of course, compare this to~\cite{biran-cornea, biran-cornea-2}.

\begin{cor}
Since every Lagrangian cobordism (i.e., every morphism) admits a mapping cone, Lagrangian cobordisms induce mapping cone sequences in the Fukaya category. 
\end{cor}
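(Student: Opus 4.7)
The plan is to derive this corollary directly from Theorem~\ref{theorem} together with the stability result of~\cite{nadler-tanaka}. Given a Lagrangian cobordism $W$ from $Y_0$ to $Y_1$, I view $W$ as a morphism in $\lag_\Lambda(M)$. Since $\lag_\Lambda(M)$ is stable, this morphism fits into a fiber sequence
\eqnn
	Y_0 \to Y_1 \to \cone(W),
\eqnd
where $\cone(W)$ is the geometrically defined mapping cone object in $\lag_\Lambda(M)$ (the construction appearing in Section~\ref{section.lag-kernels}, and alluded to in the second remark of the introduction).

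The next step is to apply $\Xi$ to this fiber sequence. By Theorem~\ref{theorem}, $\Xi$ sends fiber sequences to fiber sequences, so we obtain a fiber sequence
\eqnn
	\Xi(Y_0) \to \Xi(Y_1) \to \Xi(\cone(W))
\eqnd
in the stable $\infty$-category $\wrap_\Lambda(M)\Mod$. Because the target is stable, fiber sequences and cofiber (i.e., mapping cone) sequences coincide, so $\Xi(\cone(W))$ is equivalent to the mapping cone of the morphism $\Xi(W)$ in $\wrap_\Lambda(M)\Mod$. This is exactly the assertion that the original Lagrangian cobordism $W$ induces a mapping cone sequence in the (module category of the) Fukaya category.

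There is essentially no obstacle here: the content has already been carried by Theorem~\ref{theorem} and by the stability of $\lag_\Lambda(M)$. The only thing to emphasize in writing the proof is the identification of the geometric $\cone(W)$ in $\lag_\Lambda(M)$ with the algebraic mapping cone of $\Xi(W)$ under $\Xi$, which is immediate from exactness once one recalls that in a stable $\infty$-category the cone of a morphism is characterized up to equivalence by the fiber sequence it completes.
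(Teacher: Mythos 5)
Your proposal is correct and follows essentially the same route as the paper: the paper's own justification (given immediately after the statement) is exactly to take the geometric cone/kernel construction from~\cite{nadler-tanaka}, which produces a fiber square on the cobordism in $\lag_\Lambda(M)$, and then invoke the exactness of $\Xi$ from Theorem~\ref{theorem} to obtain a fiber (equivalently, by stability of the module category, cofiber) sequence in $\wrap_\Lambda(M)\Mod$. Nothing is missing.
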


Concretely, given a cobordism $f: L_0 \to L_1$, the kernel/cone construction from~\cite{nadler-tanaka} defines a new brane, $\cone(f) \subset M \times T^*E$, and a fiber sequence
	\eqnn
		\xymatrix{
			L_0 \ar[r]^f \ar[d]  \ar[dr] & L_1 \ar[d] \\
			0 \ar[r] & \cone(f)
		}
	\eqnd
which, by exactness, is sent to a fiber sequence in $\fukaya\Mod$. When $L_0$ and $L_1$ happen to be honest objects of $\fukaya$, this shows that any cobordism between $L_0$ and $L_1$ induces a fiber sequence in $\fukaya\Mod$. Now, as it turns out, one can prove that if $f$ has only a single vertical end collared by $L'$, then $\cone(f)$ is equivalent to $L'$. Thus, the above fiber sequence in $\lag$ is sent via $\Xi$ to a fiber sequence $L_0 \to L_1 \to L'$ in $\fukaya\Mod$ as well. This recovers a result of~\cite{biran-cornea} at the level of triangulated categories. We emphasize that one did not see any homotopy fiber squares, nor their compatibilities, in~\cite{biran-cornea, biran-cornea-2}---this prevents one from constructing the spectra, and the spectrum-level lifts, that we witness in Corollary~\ref{cor.spectrum-lift}.

\begin{remark}
However, not every cobordism admits vertically collared ends.\footnote{Note that this is another point of departure from~\cite{biran-cornea, biran-cornea-2}, who only consider cobordisms with collared ends.} This is because being eventually conical in $M \times T^*\RR$ does not imply that the brane splits as a product $L \times l$ for some curve $l \subset T^*\RR$. An example is given by a linear-at-infinity Hamiltonian isotopy from $L_0$ to $L_1$---the associated cobordism to this isotopy is not vertically collared by some $l$. Note that if such a Hamiltonian is bounded near the skeleton of $M$, each such cobordism is an equivalence, so their cones are zero objects in $\lag$.
\end{remark}

\begin{remark}
While~\cite{biran-cornea} observed that Lagrangian cobordisms give rise to exact triangles in the Fukaya category a posteriori by applying Floer theory to Lagrangian cobordisms, the work in~\cite{nadler-tanaka} shows that the theory of Lagrangian cobordisms a priori has a stable structure (e.g., admitting mapping cones) without any reference to Floer theory. One outcome of Theorem~\ref{theorem} is the demonstration of a compatibility between the inherent stability of Lagrangian cobordisms and the geometric constructions observed in~\cite{biran-cornea}.
\end{remark}

Finally, recall that in a stable $\infty$-category $\cC$, all $\hom$-spaces naturally inherit the structure of an infinite loop space. To see why, fix two objects $X$ and $Y$. Since $\cC$ has a zero object, $\hom(X,Y)$ has a basepoint given by a composition $X \to 0 \to Y$. By the universal property of the pullback diagram
	\eqnn
		\xymatrix{
			Y \ar[r] \ar[d]  \ar[dr] & 0 \ar[d] \\
			0 \ar[r] & Y[1]
		}
	\eqnd
we see that
	\eqnn
		\ldots \simeq \hom(X,Y) \simeq \Omega(X, Y[1]) \simeq \Omega^2 (X, Y[2]) \simeq \ldots.
	\eqnd
In fact, this sequence shows that $\hom(X,Y)$ is the 0th space of a (possibly non-connective) $\Omega$-spectrum.  Composition is compatible with the loop space structures, so one can justifiably think of a stable $\infty$-category as an $\infty$-category enriched in spectra. Since the loop space structure is induced by fiber sequences, an exact functor induces a map of spectrum-enriched $\infty$-categories.\footnote{While we have kept our discussion of enriched $\infty$-categories heuristic, the interested reader should consult~\cite{gepner-haugseng}.}

\begin{cor}\label{cor.spectrum-lift}
Let $Y_0, Y_1$ be objects of $\wrap$. Let $\hom_\lag(Y_0,Y_1)$ be the spectrum of $\Lambda$-non-characteristic Lagrangian cobordisms between them. Also let $WF^*(Y_0,Y_1)$ be the wrapped cochain complex, which we think of as a $\ZZ$-linear spectrum. Then $\Xi$ induces a map of spectra
	\eqn\label{eqn.hom-map}
		\hom_\lag(Y_0,Y_1) \to WF^*(Y_0,Y_1).
	\eqnd
In particular, one has induced maps of abelian groups
	\eqn\label{eqn.pi-hom-map}
		\pi_k(\hom_\lag(Y_0,Y_1)) \to HF^{-k}(Y_0,Y_1).
	\eqnd
(The negative indexing appears because, by the Dold-Kan correspondence for cochain complexes, negative cohomology groups of a complex are the positive homotopy groups of the associated infinite loop space.) 

In the case that $Y_0 = Y_1$, the induced map of spectra (\ref{eqn.hom-map}) is a map of $A_\infty$ ring spectra by the functoriality of $\Xi$. In particular, the induced map on homotopy groups (\ref{eqn.pi-hom-map}) is a map of associative, unital, graded rings.
\end{cor}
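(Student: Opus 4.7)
The plan is to extract Corollary~\ref{cor.spectrum-lift} from Theorem~\ref{theorem} by purely formal manipulation, using the general principle that an exact functor between stable $\infty$-categories induces a map of hom-spectra. The key observation is that the spectrum structure on $\hom_\lag(Y_0,Y_1)$ described just before the statement is built out of fiber sequences yielding equivalences $\hom(Y_0,Y_1) \simeq \Omega \hom(Y_0, Y_1[1])$; because $\Xi$ is exact, it preserves both the zero objects serving as basepoints and the pullback squares implementing these loopings, so the induced map on hom-spaces upgrades level-wise to a map of $\Omega$-spectra.

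The next step is to identify the target spectrum $\hom_{\wrap\Mod}(\Xi(Y_0), \Xi(Y_1))$ with $WF^*(Y_0,Y_1)$ regarded as a $\ZZ$-linear spectrum via Dold--Kan. Since $\Xi(Y_i)$ is by construction the Yoneda module represented by $Y_i$, the $\infty$-categorical Yoneda lemma gives $\hom_{\wrap\Mod}(\Xi(Y_0), \Xi(Y_1)) \simeq \hom_{\wrap_\Lambda(M)}(Y_0,Y_1)$, and the right-hand side is $WF^*(Y_0,Y_1)$ by the very definition of the wrapped Fukaya category. The shift in indexing in~(\ref{eqn.pi-hom-map}) then reflects the Dold--Kan convention, under which the positive homotopy groups of the simplicial abelian group attached to a cochain complex are the negative cohomology groups of that complex.

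For the $A_\infty$-ring statement when $Y_0=Y_1$, I would appeal to the general principle that in any $\infty$-category the endomorphism object $\End(Y)$ carries a canonical $E_1$-algebra (equivalently $A_\infty$) structure coming from composition; this is part of what it means for a stable $\infty$-category to be enriched in spectra in the sense of~\cite{gepner-haugseng}. Any functor of $\infty$-categories preserves composition coherently, and exactness lifts this preservation from the underlying $\infty$-category to the spectrum enrichment, so $\Xi$ carries $\End_{\lag}(Y)$ to $\End_{\wrap\Mod}(\Xi Y) \simeq WF^*(Y,Y)$ as an $A_\infty$-ring map. On homotopy groups this becomes a map of associative, unital, graded rings, matching the standard multiplicative structure on wrapped Floer cohomology.

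The main obstacle I expect is not conceptual but organizational: one must verify that the spectrum enrichment of $\wrap\Mod$ produced by its stable $\infty$-category structure coincides, as an $A_\infty$-ring, with the Dold--Kan/Eilenberg--MacLane spectrum attached to the $A_\infty$ algebra $WF^*(Y,Y)$ familiar from Floer theory. This compatibility between the formal stabilization of a pretriangulated $A_\infty$-category and the spectrum-level enrichment is standard but subtle, and writing it out cleanly---presumably by invoking the equivalence between $\ZZ$-linear stable $\infty$-categories and (pretriangulated) dg-categories, together with the known fact that Eilenberg--MacLane converts composition in the dg-setting into composition in the spectrum-enriched setting---will occupy most of the work. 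Once that compatibility is cited, both halves of the corollary follow directly from the exactness and functoriality of $\Xi$.
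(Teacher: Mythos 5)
Your proposal is correct and follows essentially the same route as the paper: the corollary is extracted formally from Theorem~\ref{theorem} via the observation that the loop-space/spectrum structure on hom's in a stable $\infty$-category is built from zero objects and fiber squares, both preserved by the exact functor $\Xi$, with the target identified with $WF^*(Y_0,Y_1)$ because $\Xi(Y_i)$ is the represented module, and the $A_\infty$-ring statement coming from compatibility of composition with the enrichment. Your explicit appeal to the Yoneda lemma and your flagged concern about matching the spectral enrichment with the dg-level $A_\infty$ structure are just elaborations of what the paper leaves implicit (with a pointer to~\cite{gepner-haugseng}).
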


\begin{remark}
Of course, (\ref{eqn.pi-hom-map}) was already known to be a map of abelian groups for $k>0$ by virtue of $\Xi$ being a functor of $\infty$-categories. The real meat of the corollary is in lifting these abelian group maps to a map between spectra, and lifting the graded ring maps to maps of $A_\infty$ ring spectra.
\end{remark}

\begin{example}
We give some examples to indicate some ways in which this homotopy theory of Lagrangian cobordisms departs from classical Pontrjagin-Thom-type cobordism theories.

Let $L_0 = L_1 = \emptyset$ be the empty Lagrangian. Because $\emptyset \in \ob \lag$ is a zero object, its endomorphism spectrum is contractible regardless of $M$. In contrast, classically, the space of cobordisms from $\emptyset$ to itself has homotopy groups equivalent to cobordism classes of compact manifolds. Heuristically, because we allow for certain non-compact cobordisms, these cobordism groups are zero in our setting. See Example~\ref{example.zero} for how non-compact cobordisms induce a zero morphism.

On the other hand, let $L_0 = L_1 = pt$ for $M = \Lambda = pt$. \footnote{We decorate the $pt$ with whatever grading one likes to make it into an object of $\lag(pt)$.} Classically, the space of cobordisms is an infinite loop space, so any notion of ``space of cobordisms from $pt$ to itself'' is homotopy equivalent to the space of cobordisms from $\emptyset$ to itself. However, $\lag_{pt}(pt)$ is not a space, but an $\infty$-{\em category}. Hence $\End(pt)$ need not be homotopy equivalent to $\End(\emptyset)$. Indeed, the former has a non-trivial map to the spectrum $H\ZZ$ (see Corollary~\ref{cor.Z}) while the latter is, as remarked above, the trivial spectrum.
\end{example}

\begin{remark}\label{remark.conjecture}
Finally, recall from~\cite{nadler-tanaka} and~\cite{tanaka-pairing} the conjecture that 
	\eqnn
	\lag_{\sk(M)}(M) \tensor_{\cL} \ZZ \simeq \wrap(M).
	\eqnd
To even define the notion of tensoring a $\cL$-linear $\infty$-category to a $\ZZ$-linear one, one needs the data of an $E_\infty$ ring map from $\cL$ to $H\ZZ$. One outcome of the present paper is the construction of the map of spectra $\cL \to H\ZZ$, which is realized by the case of $M=\Lambda=pt$ and defining $\cL := \hom(pt,pt)$. (The case $M=pt$ is the only connected, non-empty case in which a symplectic manifold is an object in its own Fukaya category.) In~\cite{tanaka-symm}, we will lift this map to a map of $E_\infty$ ring spectra\footnote{Since $H\ZZ$ is an easy spectrum to deal with, the only content in such a statement is that the domain admits an $E_\infty$ structure.}. For now, one can prove the following:
\end{remark}

\begin{cor}\label{cor.Z}
There is an $A_\infty$ map of ring spectra
	\eqnn
		\cL \to H\ZZ
	\eqnd
inducing a surjection on $\pi_0 H \ZZ \cong \ZZ$. 
\end{cor}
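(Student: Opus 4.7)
The plan is to specialize Corollary~\ref{cor.spectrum-lift} to the case $M = \Lambda = pt$, $Y_0 = Y_1 = pt$, and then identify the target. By definition $\cL = \hom_{\lag_{pt}(pt)}(pt,pt)$, so the first statement of Corollary~\ref{cor.spectrum-lift} immediately yields an $A_\infty$ map of ring spectra
	\eqnn
		\cL = \hom_\lag(pt,pt) \longrightarrow WF^*(pt,pt).
	\eqnd
Thus the content is to identify $WF^*(pt,pt)$ with $H\ZZ$ and to read off the effect on $\pi_0$.

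Next I would identify the target. When $M$ is a point, the wrapped Fukaya category has essentially a single object $pt$ whose self-hom complex $WF^*(pt,pt)$ is, as an $A_\infty$ algebra, quasi-isomorphic to $\ZZ$ concentrated in degree $0$: there is no room for non-constant holomorphic strips, so only the identity survives, and the $A_\infty$ operations collapse to ordinary multiplication on $\ZZ$. Via the Dold--Kan correspondence, the chain complex $\ZZ[0]$ corresponds to the Eilenberg--MacLane spectrum $H\ZZ$, and the $A_\infty$ ring structure matches the canonical one. This gives an $A_\infty$ ring spectrum map $\cL \to H\ZZ$, as desired.

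Finally, surjectivity on $\pi_0$ is automatic. Taking $\pi_0$ yields a map of unital rings $\pi_0 \cL \to \pi_0 H\ZZ \cong \ZZ$, which must send $1$ to $1$; since $\ZZ$ is generated by $1$ as an abelian group, the map is surjective. Concretely, the identity morphism $\id_{pt}$ in $\lag_{pt}(pt)$ represents a class in $\pi_0 \cL$ whose image is the unit $1 \in HF^0(pt,pt) \cong \ZZ$.

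The main point of care is the identification $WF^*(pt,pt) \simeq H\ZZ$ as $A_\infty$ ring spectra rather than merely as chain complexes or modules; this requires knowing that the multiplicative structure used to define the ring spectrum in Corollary~\ref{cor.spectrum-lift} is compatible, under Dold--Kan, with the ordinary ring structure on $\ZZ$. Granted this compatibility, the remaining steps are formal.
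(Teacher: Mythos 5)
Your proposal is correct and follows essentially the same route as the paper: specialize $\Xi$ (via Corollary~\ref{cor.spectrum-lift}) to $M=\Lambda=pt$, identify the target endomorphism ring with $\ZZ$ concentrated in degree $0$ (the paper phrases this as the point having a single transverse self-intersection, and passes through $\hom_{\fukaya(pt)\Mod}(\Xi(pt),\Xi(pt))\simeq\hom_{\chain_\ZZ}(\ZZ,\ZZ)\simeq H\ZZ$ rather than $WF^*(pt,pt)$ directly, which amounts to the same thing), and deduce surjectivity on $\pi_0$ from the unit being sent to the unit. The identification as $A_\infty$ ring spectra that you flag as the main point of care is treated at the same level of detail in the paper ("a simple exercise"), so there is no gap relative to the paper's own argument.
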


\begin{proof}
As in Remark~\ref{remark.conjecture}, we define $\cL$ to be the endomorphism spectrum 
	\eqnn
	\cL := \hom_{\lag_{pt}(pt)}(pt, pt).
	\eqnd
It is a simple exercise to see that, when the point is equipped with a grading and a relative Pin structure, the (wrapped) Fukaya category of $M = pt$ has a triangulated envelope given by finitely generated complexes of $\ZZ$-modules. And since the Lagrangian $pt$ has a single transverse self-intersection, its endomorphism ring is easily computed to be $\ZZ$ itself. So $\Xi$ induces a map of ring spectra
	\eqnn
		\cL \to \hom_{\fukaya(pt)\Mod}(\Xi(pt),\Xi(pt)) \simeq \hom_{\chain_\ZZ}(\ZZ,\ZZ) \simeq H\ZZ.
	\eqnd
This is obviously a surjection on $\pi_0$ because the identity morphism is sent to the identity morphism.
\end{proof}

\begin{remark}
In fact, the map $\pi_0\End_{\lag}(L) \to \pi_0\End_{\fukaya\Mod}(L)$ is always a surjection on the subgroup $\ZZ \subset \pi_0 \End_{\fukaya\Mod}(L)$ generated by the identity.
\end{remark}

\begin{remark}
There is a potential conflict of terminology. The morphisms $\hom_{\lag_\Lambda(M)}(Y_0,Y_1)$ form a spectrum, and represent an invariant of the pair $(Y_0,Y_1)$. Likewise, formal constructions such as Hochschild homology and cohomology are also spectra, and they are an invariant of $M$. These could be called {\em spectral} invariants. In the literature, there are already Floer-type ``spectral'' invariants with a longer history, where the term ``spectral'' is in the sense of eigenvalues of an operator (hence also in the sense of commutative algebra), rather than a spectrum in the sense of stable homotopy theory. These two instances of the word ``spectrum'' are almost always unrelated as far as we know. This conflict does not arise here as we never use the term ``spectral invariants,'' and we will try to avoid this conflict in later writings.
\end{remark}

\subsection{Outline of the proof}
We assume here that the reader is comfortable with the content of Section~\ref{section.recollections}. And in what follows, by a {\em diagram}, we simply mean a functor $D \to \cC$ where $\cC$ is some $\infty$-category, and $D$ is typically the 2-simplex $\Delta^2$ or a square $\Delta^1 \times \Delta^1$. By an equivalence of diagrams, we mean a functor $D \times \Delta^1 \to \cC$ where all arrows $- \times \Delta^1$ are equivalences. 

First, we prove in Lemma~\ref{lemma.zero} that $\Xi$ preserves zero objects. The meat is in proving that fiber squares are preserved.

Given any cobordism $Y: L_0 \to L_1$, we constructed in~\cite{nadler-tanaka} a fiber square $\Delta^1 \times \Delta^1 \to \lag_\Lambda(M)$, which we draw as follows:
	\eqnn
	\xymatrix{
		\ker(Y) \ar[r] \ar[d] \ar[dr]^{T_1}_{T_2} & L_0 \ar[d]^Y \\
		0 \ar[r] & L_1.
	}
	\eqnd
This contains the additional data of an object $\ker(Y)$, a morphism $\ker(Y) \to L_0$, and two higher cobordisms $T_i$---one should think of each $T_i$ as representing a homotopy-commutative triangle. To prove Theorem~\ref{theorem} we must show that, applying $\Xi$ to this square, we obtain a fiber square in $\wrap_\Lambda(M)\Mod$. Informally, this amounts to showing an equivalence $\Xi \circ \ker \simeq \ker \circ \Xi$ compatible with the homotopies $T_i$.

Recall from~\cite{tanaka-pairing} that the module
	\eqnn
	K := \Xi(\ker(Y))
	\eqnd
is defined as follows: One takes the object $\ker(Y)$ as depicted in Figure~\ref{figure.Y-BY}(b) and pairs it against a brane of the form $X \times \beta$. Here $X\in \ob \wrap$ is a test brane, and $\beta$ should be viewed as a small Hamiltonian perturbation of the real line $E \subset T^*E$.\footnote{Here, $X$ may be a brane in $M$, or a brane in the stabilized version, $M \times T^*E^n$ for some $n$.} Immediately, we see that we have little control over this module---if $Y$ is a cobordism with arbitrary behavior near the zero section of $T^*E$, we have no hope of comparing it to a known module.

To remedy this, one fixes a Hamiltonian perturbation from $\beta$ to $\gamma$---here, $\gamma \subset T^*\RR$ is a curve used to define $\Xi$ on morphisms, and one should think of it as having a very negative $E^\vee$ coordinate. Really, one needs only perturb $\beta$ to a curve which is equal to $\gamma$ on $T^*I$ for some compact interval $I \subset \RR$, so this can be achieved using a compactly supported Hamiltonian perturbation. This isotopy allows one to construct a natural equivalence between the module defined by $- \times \beta$ and the module defined by pairing against $- \times \gamma$. 

\begin{remark}\label{remark.directionality}
Importantly, we model this natural equivalence {\em not} by the usual method of continuation maps---rather, we use a pairing with the so-called suspension of the isotopy, which is a Lagrangian cobordism modeling the isotopy.\footnote{Here, too, is a terminology in conflict with terminology from (stable) homotopy theory. This in no way models the suspension space $\Sigma Y$ for any space $Y$.} This should be viewed as equivalent to the continuation map\footnote{See~\cite{oh-unwrapped-continuation} and~\cite{oh-floer-continuity}, where Oh also proposes the idea that all Hamiltonian continuation maps can be expressed through Lagrangian suspensions.}, but the cobordism construction gives us far greater control on the algebra of the map, which has a directionality property---it always looks like a lower-triangular matrix. This underlies most of the statements we are about to outline. Details can be found near the equation~(\ref{eqn.suspension-complex}) and in Section~\ref{section.hamiltonian}.
\end{remark}

\begin{remark}
For lack of a better word, we will still call the pseudoholomorphic strips that we count using the suspension {\em continuation strips}.
\end{remark}

Writing down the natural equivalence of modules, the isotopy induces two equivalences---one for each $T_i$. When applied to the higher cobordism $T_1$, one obtains an equivalence of diagrams
	\eqn\label{eqn.T1-equivalence-1}
		\xymatrix{
			K \ar[r]^{\Xi (k)} \ar[dr]^{\Xi(T_1)} & \Xi (L_0) \ar[d]^{\Xi (Y)} \\
			& \Xi (L_1)
		}
		\qquad
		\xra[\text{Lemma~\ref{lemma.T1-equivalence-1}}]{\sim}
		\qquad
		\xymatrix{
			\ker\Xi(Y) \ar[r]^p \ar[dr]_h & \Xi(L_0) \ar[d]^{\Xi (Y)} \\
			& \Xi(L_1).
		}
	\eqnd
where the righthand side is the upper-right triangle for the usual kernel diagram in $\wrap\Mod$.

When applied to $T_2$, one obtains an equivalence of diagrams which is not so familiar:
	\eqn\label{eqn.T2-equivalence}
		\xymatrix{
			K\ar[d] \ar[dr]_{\Xi(T_2)} \\
			\Xi(0) \ar[r] & \Xi(L_1). 
		}
		\qquad
		\xra[\text{Lemma~\ref{lemma.T2-equivalence}}]{\sim}
		\qquad
		\xymatrix{
			I \ar[d] \ar[dr]^g_{G}  \\
			0 \ar[r] & \Xi(L_1). 
		}
	\eqnd
We describe the module $I$ in Section~\ref{section.T2}.
As it turns out, one can write down, using purely algebra, an equivalence of triangles
	\eqn\label{eqn.T1-equivalence-2}
		\xymatrix{
			I \ar[r] \ar[dr]^g & \Xi(L_0) \ar[d]^{\Xi(Y)}  \\
			 & \Xi(L_1). 
		}
		\qquad
		\xra[\text{Lemma~\ref{lemma.T1-equivalence-2}}]{\sim}
		\qquad	
		\xymatrix{
			\ker\Xi(Y) \ar[r]^p \ar[dr]_h & \Xi(L_0) \ar[d]^{\Xi (Y)} \\
			& \Xi(L_1).
		}
	\eqnd
where the righthand triangle is the same triangle as in~(\ref{eqn.T1-equivalence-1}).

The point of all this is to first exhibit a square involving $I$, and prove that it is equivalent to the square involving $K$; but we require a proof that the composite of the equivalences $(\ref{eqn.T1-equivalence-2})\circ(\ref{eqn.T1-equivalence-1})$ is compatible with the equivalence~(\ref{eqn.T2-equivalence}). This means we must show that the two rectangles
	\eqnn
		\xymatrix{
    		& K\ar[dr] \ar[dl] & \\
    		\ker \Xi(Y) \ar[dr]_h \ar@{}[rr]^{(\ref{eqn.T2-equivalence})}
&& \Xi(L_1) \ar[dl] \\
    		& \Xi(L_1)
		}
	\eqnd
and
	\eqnn
		\xymatrix{
    		& K\ar[dr] \ar[dl] & \\
    		\ker \Xi(Y) \ar[dr]_h \ar@{}[rr]^-{(\ref{eqn.T1-equivalence-2}) \circ (\ref{eqn.T1-equivalence-1})}
				&& \Xi(L_1) \ar[dl] \\
    		& \Xi(L_1)		}
	\eqnd
are equal. It turns out they are not equal, but a slight modification of the composite prism rectifies this---gluing the diagrams together along this common rectangle, one obtains an equivalence of diagrams
	\eqn\label{eqn.square-equivalence}
		\xymatrix{
    		 K\ar[r] \ar[dr] \ar[d] & \Xi(L_0) \ar[d]  \\
    		 \Xi(0) \ar[r] & \Xi(L_1)
		}
	\qquad
		\xra[\text{Lemma~\ref{lemma.rectangles-glue}}]{\sim}
	\qquad
		\xymatrix{
			I \ar[r] \ar[dr]  \ar[d] & \Xi(L_0) \ar[d] \\
			0 \ar[r] & \Xi(L_1).
		}
	\eqnd

As a final step, we show that the equivalence $\eta: \ker \Xi (Y) \to I$ involved in~(\ref{eqn.T1-equivalence-2}) has the following property: If one pulls back the square involving $I$ to a square involving $\ker \Xi(Y)$, one recovers the usual homotopy pullback diagram in $\wrap\Mod$. This is the content of Lemma~\ref{lemma.I-pullback}. In other words, the square involving $I$ is indeed a pullback square, and the theorem is proven.

To conclude this outline, we make two cautionary remarks. First, while we have written this outline using diagrams of modules, we do not give detailed proofs that $I$ is indeed a module, nor that the maps to/from $I$ are indeed module maps. Instead, we actually run all of our arguments after applying an arbitrary test object $X$ to the putative module $I$. So, strictly speaking, what we prove is that when $X$ is evaluated on a square diagram of modules, one obtains a pullback diagram in cochain complexes. This implies our main result anyway, as limits in functor categories (e.g., module categories) are computed object-wise.

Second: While this outline was presented in the most algebraic manner possible, we emphasize that the main geometric arguments are the ones used in the ``directionality'' mentioned in Remark~\ref{remark.directionality}---this directionality was already used repeatedly in~\cite{tanaka-pairing}. Its validity is based on the boundary-stripping arguments of ibid., which in turn is based on the admissibility of certain ``collared'' Floer perturbation data, whose regularity and associated Gromov compactness were also established in ibid. The basic idea is that, using direct product Floer data, we can reduce questions of pseudoholomorphic strips to honest holomorphic strips in $T^*\RR$, where the open mapping theorem implies that disks can only propagate with ``non-backward'' derivatives near intersection points.

\subsection{Some conventions}
We always use cohomological grading, so differential raise degree by 1. By the commutator, we mean the graded commutator, so
	\eqnn
		[d,f] := df - (-1)^{|x|} fd.
	\eqnd
Also, when we evaluate the module $\Xi(L)$ on a test object $X$, we will often write the resulting cochain complex as $(X,L)$. That is,
	\eqnn
		(X,L) := \Xi(L)(X).
	\eqnd

\subsection{Acknowledgments} This work was conducted while I was supported chiefly by the National Science Foundation under Award No. DMS-1400761. 

\clearpage
\section{Recollections}\label{section.recollections}

\subsection{Stable $\infty$-categories}

Let $\cC$ be an $\infty$-category---that is, a simplicial set satisfying the weak Kan condition. 

\begin{defn}[Pullbacks and pushouts]
Consider a map of simplicial sets $\Delta^1 \times \Delta^1 \to \cC$, pictured as the diagram
	\eqnn
		\xymatrix{
			A \ar[r] \ar[d] \ar[dr] & B \ar[d] \\
			C \ar[r] & D.
		}
	\eqnd
It is called a pullback square if it realizes $A$ as the limit of the diagram $B \to D \leftarrow C$. Dually, this square is called a pushout square if it realizes $D$ as the colimit of the diagram $C \leftarrow A \to B$. 
\end{defn}

\begin{defn}
A pullback square with $C=0$ is called a {\em fiber sequence}, or a {\em homotopy kernel diagram}, or a {\em kernel} diagram. We omit the word ``homotopy'' when it is clear we are speaking of an $\infty$-category.

Likewise, a pushout square with $C=0$ is called a {\em cofiber sequence}, or a {\em mapping cone sequence.}
\end{defn}

\begin{defn}[Initial, terminal, zero objects]
An initial object is a colimit of the empty diagram, while a terminal object is a limit of the empty diagram. We say that an $\infty$-category $\cC$ has a zero object if it has an initial and a terminal object, and if a map from the former to the latter is an equivalence. 
\end{defn}

\begin{remark}
Recall that in an $\infty$-category, there is only one notion of a limit or colimit. In the case that the $\infty$-category is (the nerve of) a usual category as introduced by Eilenberg and Mac Lane~\cite{eilenberg-maclane}, this notion agrees with the classical one. When the $\infty$-category arises from a more homotopical setting (such as a combinatorial model category), a limit in the $\infty$-category is equivalent to a {\em homotopy} limit (e.g., in the model category). So when we say ``pullback,'' ``zero object,'' or any other notion of limit/colimit in this paper, we encourage the model-category-minded reader to insert an implied ``homotopy'' before the limit/colimit. We emphasize that we omit the word ``homotopy'' because, in the framework of $\infty$-categories, there is only one natural notion of limit/colimit anyway.

Here is an illustration of this remark: A pushout in the $\infty$-category of cochain complexes, which we denote $\chain$, is computed using the usual notion of mapping cones for a map, rather than the ``naive'' cokernel (i.e., degree-wise quotient) borrowed from a naive category of cochain complexes. Likewise, a pullback is computed using the shift of the mapping cone.
\end{remark}

\begin{defn}[Stable $\infty$-categories]
$\cC$ is called stable if and only if: 
\enum
	\item $\cC$ has a zero object,
	\item Every morphism extends to a fiber sequence and a cofiber sequence, and
	\item A square is a fiber sequence if and only if it's a cofiber sequence.
\enumd
\end{defn}

\begin{defn} 
For any object $A \in \cC$, the object $A[-1]$ is defined by the homotopy pullback square
	\eqnn
		\xymatrix{
		A[-1] \ar[r] \ar[d]  \ar[dr] & 0 \ar[d] \\
		0 \ar[r] & A
		}
	\eqnd
This defines an endofunctor $[-1]$; its effect on morphisms is determined by the universal property of pullbacks. 
\end{defn}

If $\cC$ is stable, $[-1]$ is an inverse functor to the functor $[1]$, which takes an object $A$ to the cofiber of the map $A \to 0$.

We now review the two relevant examples for this paper.	

\subsection{Chain complexes}
Let $\chain$ be the $\infty$-category of cochain complexes over $\ZZ$.\footnote{Of course, the category of cochain complexes is most naturally a dg-category. We render it an $\infty$-category by taking the dg-nerve~\cite{higher-algebra}, or, if one likes, a hom-wise Dold-Kan correspondence~\cite{dold-symmetric-products, kan-functors}.} Then the shift functor $[-1]$ is what one expects: $A[-1]$ is quasi-isomorphic to the cochain complex obtained from $A$ by shifting the gradings by one:
	\eqnn
		(A[-1])^k = A^{k-1}.
	\eqnd
The differential of this cochain complex is {\em minus} the differential of the original cochain complex $A$.
When we say that
	\eqnn
		\xymatrix{
			A[-1] \ar[r] \ar[d] \ar[dr] & 0 \ar[d] \\
			0 \ar[r] & A
		}
	\eqnd
is a pullback diagram in cochain complexes, all the arrows are the zero morphism, but the two-simplices specify homotopies.\footnote{This choice is important: The null-homotopy from the 0 map to the 0 map, for instance, would not exhibit a pullback diagram.} There are many possible choices for this homotopy, but the space of choices is contractible. We take our standard model for the homotopy to be exhibited by the degree -1 element in $\hom(A[-1],A)$ given by (the shift of) the identity map. This, of course, is also the element realizing the equivalence of $\hom$ cochain complexes:
	\eqnn
		\hom(B, A[-1]) \simeq \hom(B,A)[-1].
	\eqnd
\begin{remark}
We will sometimes omit the diagonal edge from $A[-1] \to A$ in our pullback diagrams, especially in $\chain$. This is because in $\chain$, there is a natural homotopy from $A[-1] \to 0 \to A$ to $A[-1] \xra{0} A$, given by the zero element of the hom cochain complex; this has the effect of being able to ``push'' all higher homotopies to either one of the two triangles in the square diagram.
\end{remark}

More generally, fix a morphism $f: A_0 \to A_1$. Then the homotopy kernel of $f$ is determined by the cochain complex
	\eqnn
		\ker f \cong (A_0 \oplus A_1[-1], d),
		\qquad
		d(x_0, x_1) = (dx_0, f(x_0) - dx_1)
	\eqnd
and the diagram	
	\eqn\label{eqn.chain-kernel}
		\xymatrix{
		\ker f \ar[r]^p \ar[d] \ar[dr]  & A_0 \ar[d]^f \\
		0 \ar[r] & A_1
		}
	\eqnd
which we now make precise. The morphism
	\eqn\label{eqn.p}
		p: \ker f \to A_0
	\eqnd
are the obvious projection. The diagonal arrow is $f \circ p$, and the upper-right triangle is given by the 0 homotopy. In the lower-left triangle, both arrows to/from 0 are the 0 map, and the triangle is specified by the element
	\eqn\label{eqn.R}
		R \in \hom^{-1}(\ker f, A_1),
		\qquad
		R(x_0, x_1) = -x_1.
	\eqnd
By abuse of notation, we will also refer to the diagram~(\ref{eqn.chain-kernel}) as $R$. Note that $R$ indeed specifies a homotopy from $f \circ p$ to the 0 map, as $dR + Rd = 0 - f \circ p$.

Finally, let us review the universal property of the homotopy kernel. Fix a diagram $G: \Delta^1 \times \Delta^1 \to \chain$, which we depict as
	\eqnn
		\xymatrix{
		A' \ar[r]^{p'} \ar[d]  \ar[dr] & A_0 \ar[d]^f \\
		0 \ar[r] & A_1.
		}
	\eqnd
Explicitly, there is some morphism $p': A' \to A_0$, and some morphism $A' \to A_1$, but otherwise all edges are as before. One has the additional data of homotopies between the various morphisms.
Then the universal property of the homotopy kernel says there exists a unique (up to contractible choice) morphism $\alpha$ such that when pulling back (\ref{eqn.chain-kernel}) along $\alpha$,
	\eqnn
		\xymatrix{
			A' \ar[dr]^\alpha \\
			& \ker f \ar[r] \ar[d]  \ar[dr] & A_0 \ar[d] \\
			& 0 \ar[r] & A_1
		}
	\eqnd
the resulting diagram is homotopic to G. The formula for $\alpha$ can be made explicit:
	\eqnn
		\alpha(x)
		:=
		(p'(x) , G (x) ).
	\eqnd
Here, we are abusing notation and writing $G$ for the element of $\hom^{-1}(A', A_1)$ realizing the homotopy between $f \circ p'$ and the 0 morphism.

\begin{remark}\label{remark.module-kernel}
Given an $A_\infty$-category $\cA$, the module dg-category $\cA\Mod$ is also a stable $\infty$-category. How? First, one thinks of $\cA\Mod$ as an $\infty$-category by applying Lurie's dg-nerve construction~\cite{higher-algebra}. One notes that a module category's (co)limits are computed object-wise, and thus concludes that the module category is pre-triangulated; we finish by noting that any pre-triangulated dg category has a stable dg-nerve (for a proof, see~\cite{cohn}).\footnote{Another proof would be to show that $\ZZ$-linear functors from the nerve of $\cA$ to the nerve of $\chain$ are equivalent as an $\infty$-category to the nerve of the dg-category $\cA\Mod$. The former is stable again for general reasons.} As an example, if $f: A_0 \to A_1$ is a morphism in the module category, then for any object $X \in \cA$, one has a natural cochain complex
	\eqn\label{eqn.kernel-X}
		\ker f_X = A_0(X) \oplus A_1(X)[-1],
		\qquad
		d(x_0, x_1) = (dx_0, f_X(x_0) - d x_1).
	\eqnd
The diagram (\ref{eqn.chain-kernel}) defines for each $X$ a homotopy kernel diagram
	\eqn\label{eqn.mod-kernel}
		\xymatrix{
		\ker f_X \ar[r]^{p_X} \ar[d]  \ar[dr] & A_0(X) \ar[d]^{f_X} \\
		0 \ar[r] & A_1(X)
		}		
	\eqnd
hence one has a homotopy kernel diagram in $\cA\Mod$ just as in (\ref{eqn.chain-kernel}). 
\end{remark}

\begin{remark}
We will often talks about ``paths'' to describe homotopies in chain complexes; we make explicit what we mean.

Any cochain complex $A$ can be truncated to a new cochain complex $\tau_{\leq 0} A$ concentrated in non-positive degrees, while leaving the non-positive cohomology groups of $A$ unchanged.\footnote{More appropriately: The truncation and identity functors allow for a natural transformation $\tau_{\leq 0} \to \id$ inducing an isomorphism on non-positive cohomology.} By the Dold-Kan correspondence, any cochain complex $\tau_{\leq 0} A$ concentrated in non-positive degrees can be made into a simplicial abelian group, and in particular, a simplicial set $DK(\tau_{\leq 0} A)$. Of course, simplicial sets are a combinatorial model for topological spaces, and in particular admit notions of homotopy groups and the like. The Dold-Kan functor induces isomorphisms
	\eqnn
		H^{-p}(\tau_{\leq 0} A) \cong \pi_p DK(\tau_{\leq 0} A)
	\eqnd
so that cohomology groups can be faithfully thought of as homotopy groups of a space. Moreover, fix two closed elements $a_0, a_1 \in A^0$. These define vertices of $DK(\tau_{\leq 0} A)$. Since the simplices of $DK$ are built from elements of $\tau_{\leq 0}A$, one can show that an element of $A^{-1}$ realizing a homotopy between $a_0, a_1 \in A^0$ induces an edge in $DK(\tau_{\leq 0} A)$ from $a_0$ to $a_1$. This allows us to unambiguously translate between homotopies (in the sense of cochain complexes) and paths (realized as edges in a simplicial set).
\end{remark}

\subsection{Lagrangian cobordisms}\label{section.lag-cone}

\begin{notation} 
We review some notation from~\cite{tanaka-pairing}:
\begin{itemize}
	\item $E = F = \RR$ is the Euclidean line. We use different letters to denote $E^n$ and $F^N$ because these Euclidean spaces play different roles: $E^n$ is a stabilizing direction where objects live. $F^N$ is the direction in which morphisms propagate.
	\item $E^\vee = T^*_0 E$ is the cotangent fiber of $T^*E$ at $0 \in E$. 
	\item $\lag_\Lambda(M)$ will be abbreviated as $\lag$ with the dependence on $\Lambda$ and $M$ suppressed.
	\item $\beta \subset T^*E$ is a non-compact curve which is closed as a subset, and which is parallel to the zero section outside some compact region. One can think of $\beta$ as a small perturbation of the zero section, though we will take some freedom with this interpretation. The key property is that when one pairs $\beta$ against $E^\vee$, one finds that the Floer cochain complex is the ``correct'' answer: It is equivalent to a free $\ZZ$ module with one generator, concentrated in some degree depending on the grading placed on $\beta$ and $E^\vee$. When we need a collection of such $\beta$, they will be indexed as $\beta_0, \beta_1, \ldots $.
	\item $\gamma \subset T^*F$ is also a non-compact curve. One chooses a collection of $\gamma$ for each (higher) morphism $Y \subset M \times T^*F^N$.  One should think of $\gamma$ as a version of $\beta$ which is ``deep enough'' (i.e., has negative enough $F^\vee$ coordinates) to be compatible with the non-characteristic property of morphisms. 
	\item We will often talk of products $L_0 \times L_1 \subset M_0 \times M_1$. When each $L_i$ is an exact, convex Lagrangian of $M_i$ (meaning that their primitives vanish outside a compact subset, and that $L_i$ is eventually invariant under the Liouville flow), it is not true that $L_0 \times L_1$ is also convex. 
	
Given $L_i \subset M_i$, we choose Hamiltonian vector fields $X_i$ such that
	\eqn\label{eqn.products}
	L_1 \times L_2 \to M_1 \times M_2,
	\qquad
	(x_1,x_2) \mapsto (\Phi_{-f_2(x_2)}^{X_1} (x_1), \Phi_{-f_1(x_1)}^{X_2}(x_2))
	\eqnd
is conical---this embedding satisfies the property that $\theta_1 + \theta_2$ vanishes along 
	\eqnn
	\del(L_1 \times L_2) = \del L_1 \times L_2 \bigcup_{\del L_1 \times \del L_2} L_1 \times \del L_2.
	\eqnd
To see such $X_i$ exist, one simply needs to take $X_i$ to be the Hamiltonian flow of some linear Hamiltonian $H_i: M_i \to \RR$---for instance, if $X_i$ is equal to the Reeb flow along $\del M_i$.

\end{itemize}
\end{notation}

\subsubsection{Objects and morphisms}
Recall that an object of $\lag$ is a collection of data
	\eqnn
	(L, f, \alpha, P)
	\eqnd
where $L \subset M \times T^*E^n$ is a Lagrangian for some $n \geq 0$, and the rest of the data are a primitive, a grading, and a relative Pin structure. By definition, a brane $L \subset M \times T^*E^n$ is considered to be the same object as $L \times E^\vee \subset M \times T^*E^n \times T^*E$.\footnote{This identification is a natural symplectic version of usual constructions in cobordism theory: Given $W \subset E^N$, one can further embed $W$ into $E^{N+1}$. The conormals satisfy $T^*_WE^N \times E^\vee = T^*_W E^{N+1}.$}

Assume without loss of generality that $L_0$ and $L_1$ are both submanifolds of $M \times T^*E^n$ for the same $n \geq 0$.
A morphism $Y$ from $L_0$ to $L_1$ is also a collection of data
	\eqnn
		(Y, f_Y, \alpha_Y, P_Y)
	\eqnd
where now $Y$ is a submanifold of $M \times T^*E^n \times T^*F$. Importantly, each $Y$ must be collared as follows: There exists some $t_0, t_1 \in F $ such that 
	\eqnn
		Y|_{(-\infty,t_0]} = L_0 \times (-\infty, t_0] \subset (M \times T^*E^n) \times F \subset (M \times T^*E^n) \times T^*F
	\eqnd
and
	\eqnn
		Y|_{[t_1,\infty)} = L_1 \times [t_1,\infty) \subset (M \times T^*E^n) \times F \subset (M \times T^*E^n) \times T^*F.
	\eqnd 
Finally, $Y$ must also satisfy a {\em $\Lambda$-non-characteristic} condition, which means that there is some $p_0 \in F^\vee$ so that
	\eqnn
		Y|_{p \leq p_0} = \{(y, q, p) \in (M \times T^*E^n) \times F \times F^\vee \text{ such that $p \leq p_0$} \}
	\eqnd
avoids some tubular neighborhood of $\Lambda \times E^n \times T^*F$. Note that there is no ``$T^*$'' in the $E^n$ factor. This $\Lambda$-non-characteristic condition ensures that $X \times \beta^n \times \gamma$ only has intersections where $Y$ is collared.

An $N$-morphism is the data of a Lagrangian $Y \subset M \times T^*E^n \times T^*F^N$, again with decorations, again with certain collaring conditions, and again with a $\Lambda$-non-characteristic requirement. See~\cite{nadler-tanaka} and~\cite{tanaka-pairing} for details.

We emphasize that while the $N=1$ case represents (possibly non-invertible) morphisms between objects, the $N \geq 2$-morphisms should be thought of as simplices in some space of cobordisms. For instance, a cobordism between $Y$ and $Y'$ should be interpreted as a homotopy between $Y$ and $Y'$.\footnote{This follows a general philosophy: While there are two natural choices of what one means by ``invertibility'' in an $(\infty,\infty)$-category, one of the choices necessitates that fully dualizable morphisms are equivalences. The key feature in $\lag$ is that in $F^N = F_1 \times F_2 \times \ldots F_N$, the $\Lambda$-characteristic condition only limits how cobordisms can behave in the $F_N$ direction, while the cobordisms realize a ``full dualizability'' in all other $F$ directions.}

\begin{example}[Zero]\label{example.zero}
The empty brane $\emptyset \subset M \times T^*E^n$ is an object of $\lag$. It is in fact a zero object~\cite{nadler-tanaka}. Depicted in Figure~\ref{figure.zero} are morphisms $\emptyset \to L$ and $L \to \emptyset$ for any object $L \in \ob \lag$. By virtue of $\emptyset$ being a zero object, any morphism to/from $L$ is homotopic to the depicted morphisms.

Concretely, fix a connected, smooth curve $l \subset T^*F$ satisfying the following conditions:
\enum
	\item There exists some real number $q_0 \in F$ such that $l$ is equal to the zero section wherever $q \leq q_0$.
	\item There exists some real number $q_1$ and some real number $p_1 \in F^\vee$ such that $l$ is equal to the vertical ray $\{(q_1, p \geq p_1)\}$ wherever $q \geq q_1$. 
	\item $l$ admits a primitive $f_l: l \to \RR$ such that $df_l = pdq|_l$ and $f_l = 0$ on the above two regions.
\enumd

Then for any $L \subset M \times T^*E^n$, the Lagrangian $L \times l$ is a brane in $(M \times T^*E^n) \times T^*F$. It is by definition collared by $L$ where $q<<0$ in the $F$ component, while it is collared by the empty manifold where $q >>0$. Thus this is a morphism from $L$ to $\emptyset$. The horizontal mirror image $l'$ of $l$ defines another cobordism $L \times l'$, which is a morphism from $\emptyset$ to $L$. See Figure~\ref{figure.zero}.

In either case, the gradings and Pin structure on $l$ and $l'$ are chosen so that the corresponding structures on $L \times l$ and $L \times l'$ restrict to the standard ones on $L \times \text{zero section}$.
\end{example}

\begin{figure}
		\[
			\xy
			\xyimport(8,8)(0,0){\includegraphics[width=2in]{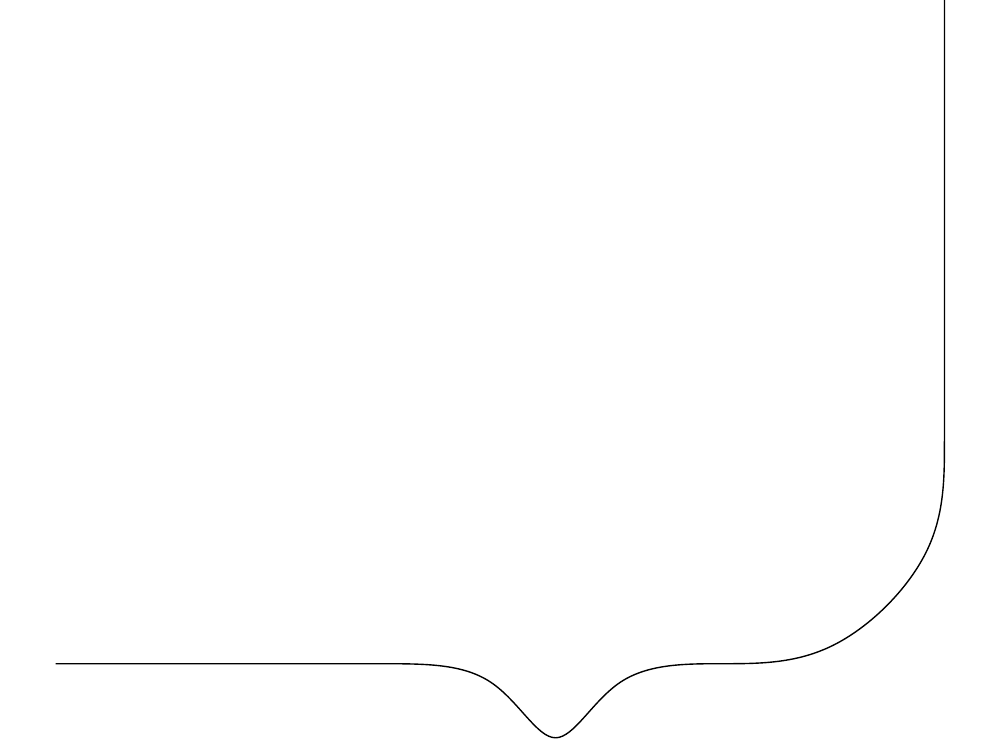}}
			,(4,-1)*+{\text{(a)}}
			\endxy
			\qquad
			\qquad
			\xy
			\xyimport(8,8)(0,0){\includegraphics[width=2in]{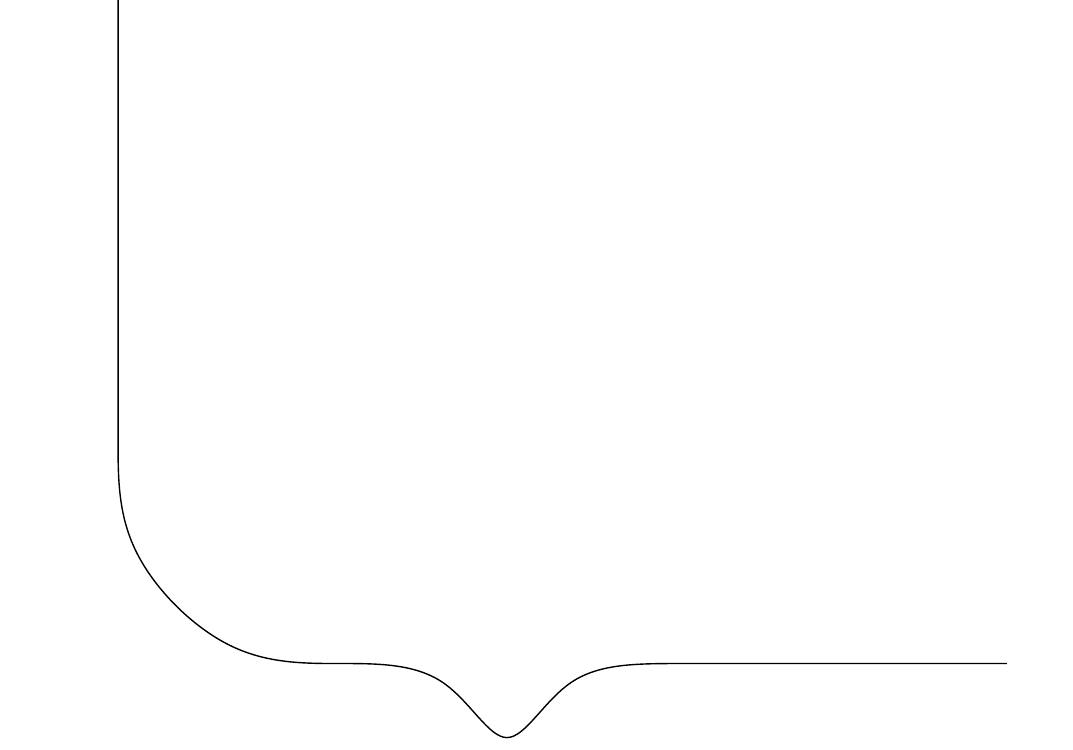}}
			,(4,-1)*+{\text{(b)}}
			\endxy
		\]
\begin{image}\label{figure.zero}
The curves $l$ and $l' \subset T^*F$. (a) The cobordism $L \times l$ is a morphism from $L$ to $\emptyset$. (b) The cobordism $L \times l'$ is a morphism from $\emptyset$ to $L$. Note that $l$ and $l'$ have ``dips'' in them to ensure that one can choose a primitive which equals zero away from a compact subset.
\end{image}
\end{figure}

\subsubsection{Kernels}\label{section.lag-kernels}
To simplify notation, we will assume that all our objects are submanifolds $L \subset M$. The reader can replace $M$ with $M \times T^*E^n$ in what follows to incorporate the cases when $L$ is in a stabilized version of $M$.

Let $Y \subset M \times T^*F$ be a morphism from $L_0$ to $L_1$. Then by definition, its kernel $\ker Y$ is a Lagrangian $B(Y) \subset M \times T^*E$, obtained from $Y$ by removing its infinite horizontal ends and appending a ``cone tail'' instead. See Figure~\ref{figure.Y-BY}. While we wrote this brane as $B(Y)$ in~\cite{tanaka-pairing}, we will hereafter refer to this brane as $\ker Y$ instead.

\begin{figure}
		\[
			\xy
			\xyimport(8,8)(0,0){\includegraphics[height=2in]{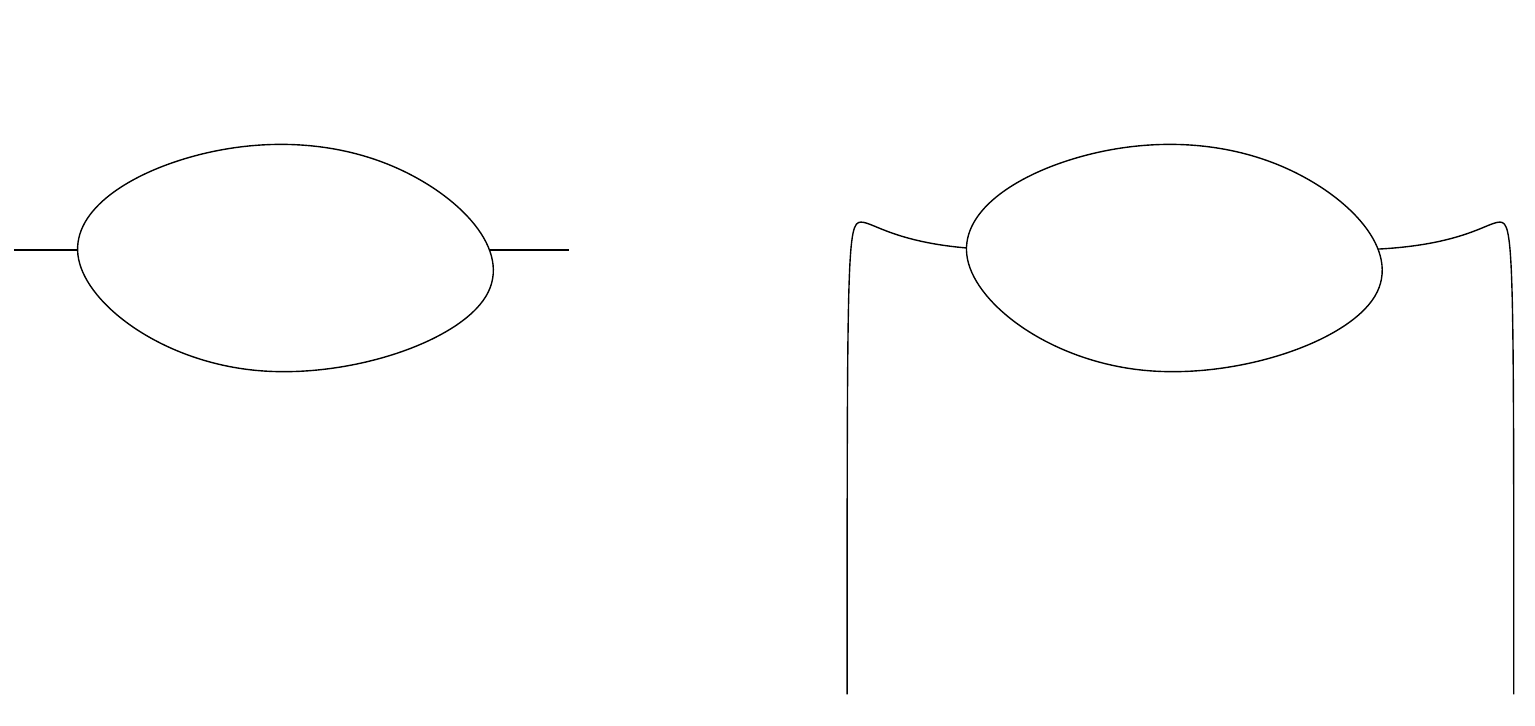}}
			,(1.5,0)*+{\text{(a)}}
			,(6.2,0)*+{\text{(b)}}
			\endxy
		\]
\begin{image}\label{figure.Y-BY}
An image of (a) a cobordism $Y \subset M \times T^*F$, and (b) the kernel object $\ker Y \subset M \times T^*E$, obtained by attaching downward pointing ``tails'' to $Y$. Note that (a) is drawn as the image of the projection of $Y$ to $T^*F$. Likewise, (b) is the image of the projection of $\ker Y$ to $T^*E$. Note also that $Y$ is vertically bounded, in that it is bounded in the $F^\vee$ component, but it need not be so long as it is $\Lambda$-non-characteristic.
\end{image}
\end{figure}

We now review the fiber diagram constructed in~\cite{nadler-tanaka}. We depict it on the left below labeling its arrows, and on the right labeling its 2-simplices:
	\eqn\label{eqn.lag-kernel}
		\xymatrix{
		\ker Y \ar[r]^j \ar[d] \ar[dr]^h & L_0 \ar[d]^Y \\
		0 \ar[r] & L_1,
		}
		\qquad
		\qquad
		\qquad
		\xymatrix{
		\ker Y \ar[r] \ar[d] \ar[dr]^{T_1}_{T_2} & L_0 \ar[d] \\
		0 \ar[r] & L_1.
		}
	\eqnd

Each of these were described in more detail in~\cite{nadler-tanaka}, so we do not rehash the descriptions here. But we do draw the cobordisms giving rise to the simplices:
\begin{itemize}
	\item For $j$, see Figure~\ref{figure.j}.
	\item For $h$, see Figure~\ref{figure.h}.
	\item For $T_1$, see Figure~\ref{figure.T1}. As a cobordism, it is simply the composition $Y^{\dd} \circ j \subset M \times T^*E \times T^*F_2$, thickened by the zero section $F_1 \subset T^*F_1$.
	\item For $T_2$, see Figure~\ref{figure.T2}.
\end{itemize}

To help the reader interpret these images, we give a description of $j$ below. As one can see, it takes a few pages to give a precise description without images; this is why we refer the reader to~\cite{nadler-tanaka} for more detailed descriptions.

\begin{figure}
		\[
			\xy
			\xyimport(8,8)(0,0){\includegraphics[width=5in]{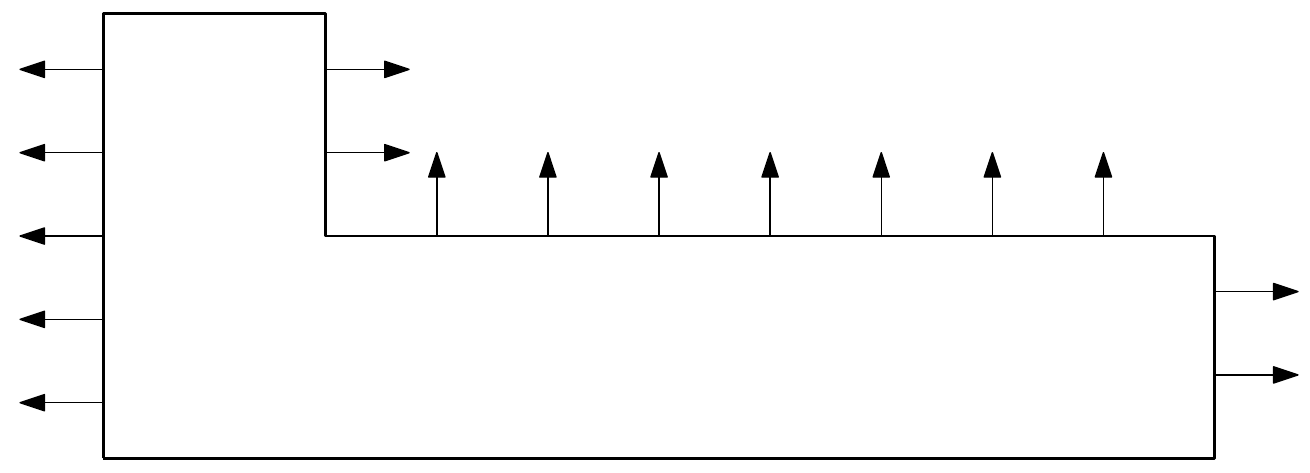}}
			,(4,2)*+{f}
			,(1.3,2)*+{L_0}
			,(1.3,6)*+{L_0}
			,(6.5, 2)*+{L_1}
			\endxy
		\]
\begin{image}\label{figure.j}
The morphism $j$ from $\ker Y$ to $L_0$, depicted by its projection to $E \times F$. The horizontal direction is the $E$ direction (with the rightward direction being positive), while the vertical direction is the $F$ direction (with the upward direction being positive). Note that, by using the standard metric on $\RR^2$, we have depicted the cotangent directions by vector fields. The presence of an arrow signifies that the morphism goes off to the indicated cotangent direction ad infinitum.
\end{image}
\end{figure}

{\bf Construction of $j$.}
Fix $x_0 \in E$ and $q_0 \in F$. Let $K \subset E \times F$ be the subset of those $(x,q)$ such that $x \geq x_0$ and $q \geq q_0$. We fix a smooth function $f$ defined on $K^C$, the complement, such that for some $\epsilon>0$:
	\enum
		\item $f$ is identically zero outside a small $\epsilon$-neighborhood $U(\del K)$ of $\del K$,
		\item $f$ approaches $+\infty$ along $\del K$,  
		\item $f$ does not depend on $x$ when $x > x_0 + \epsilon$,
		\item $f$ does not depend on $q$ when $q > q_0 + \epsilon$, and 
		\item one can isotope the graph Lagrangian $df \subset T^*E \times T^*F$ so that it allows for a primitive equalling zero outside a compact subset, while maintaining the previous two $x$- and $q$-invariant properties.
	\enumd
Let $\Gamma \subset T^*E \times T^*F$ denote this isotoped Lagrangian. Note that by the invariance property, there is a well-defined brane $\Gamma_F \subset T^*F$ collaring $\Gamma$ where $x >>0$.

Now consider $\ker Y$. By translating in the $E$ component if need be, assume that the region where $\ker f$ is collared by $L_0$ is a small neighborhood of $x_0 \in E$. We define $j$ to be the Lagrangian $j \subset M \times T^*E \times T^*F$ given by the union of the following Lagrangians:
\enum
	\item $(\ker Y \times F)|_{U(\del K)^C} \subset M \times T^*E(\del K)^C$.
	\item $(\ker Y)_{x \geq x_0 + \epsilon} \times (\Gamma_F) \subset M \times T^*E_{x \geq x_0 + \epsilon} \times T^*F.$
	\item $(L_0 \times \Gamma)|_{x\leq x_0 + \epsilon, q \geq q_0}$.
\enumd

\begin{remark}
Note that $j$ is collared at $q>>0$ by a brane of the form $L_0 \times w$ where $w$ is some curve in $T^*E$. By construction of $\Gamma$, $w$ is in fact Hamiltonian isotopic to the vertical fiber $E^\vee$ by a Hamiltonian isotopy which is linear at infinity---hence $L_0 \times w$ is equivalent to $L_0 \times E^\vee$ in $\lag$. 
\end{remark}

\begin{remark}
Likewise, the morphism $Y: L_0 \to L_1$ inside $M \times T^*F$ stabilizes to a morphism $Y \times w: L_0 \times w \to L_1 \times w$ inside $M \times T^*E \times T^*F$. 
\end{remark}

\subsection{The functor $\Xi$}
Finally, we briefly recall the definition of $\Xi$. We stop with what $\Xi$ does to 2-simplices, as that's all we need in this paper.

\subsubsection{On vertices}
If $L \subset M \times T^*E^n$ is some brane, $\Xi(L)$ is the module it represents. Concretely, we fix curves $\beta_i \subset T^*E$ for $i=0,1,\ldots$. These are chosen carefully, but one can roughly think of as perturbations of the zero section. Then any brane $X \subset M$ is sent to the cochain complex
	\eqnn
	\Xi(L)(X) := CF^*(X \times \beta_0^n, L).
	\eqnd
Given any finite collection of branes $X_0,\ldots,X_k \in \ob \wrap$, we count holomorphic disks in $M \times T^*E^n$ with boundary on
	\eqnn
		X_0 \times \beta_0^n, \ldots, X_k \times \beta_k^n, L.
	\eqnd
This defines the operations
	\eqnn
		CW^*(X_{k-1}, X_k) \tensor \ldots CF^*(X_0 \times \beta_0^n, L)
		\to
		CF^*(X_k \times \beta_k^n, L)
		\simeq
		CF^*(X_k \times \beta_0^n, L)
	\eqnd
where we have used equivalences
	\eqnn
		CF^*(X_i \times \beta_i^n, X_j \times \beta_j^n) \simeq CW^*(X_i, X_j)
	\eqnd
owing to our choices of $\beta_i$. Note importantly that the stabilization $L^{\dd} = L \times E^\vee \subset M \times T^*E^{n+1}$ represents the same module as $L$ because
	\eqnn
		CF^*(\beta_i, E^\vee) \cong \ZZ.
	\eqnd

\begin{lemma}\label{lemma.zero}
$\Xi$ preserves zero objects.
\end{lemma}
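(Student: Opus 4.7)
The strategy is direct: unpack the definition of $\Xi(\emptyset)$ on test objects and observe that everything vanishes, then invoke the fact that limits and colimits in a module $\infty$-category are computed objectwise.

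First I would recall from the construction just above the lemma that $\Xi(L)$ is the module whose value on a test brane $X$ is the Floer cochain complex
\[
    \Xi(L)(X) := CF^*(X \times \beta_0^n, L),
\]
with higher $A_\infty$ module structure maps obtained by counting pseudoholomorphic disks in $M \times T^*E^n$ whose boundary is constrained to lie on $X_0 \times \beta_0^n, \ldots, X_k \times \beta_k^n, L$. When $L = \emptyset$, there are no intersection points between $X \times \beta_0^n$ and $L$, so the underlying graded abelian group $CF^*(X \times \beta_0^n, \emptyset)$ is the zero complex. Moreover every moduli space appearing in an $A_\infty$ structure map requires a boundary condition on $L = \emptyset$, so these moduli spaces are empty and the structure maps all vanish. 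Hence $\Xi(\emptyset)$ is literally the zero module: the functor sending every test object to the zero complex, with zero structure maps.

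Next I would argue that the zero module is a zero object of $\wrap_\Lambda(M)\Mod$. Since $\wrap_\Lambda(M)\Mod$ is (the nerve of) a pretriangulated dg-category, its (co)limits are computed objectwise in $\chain$, as recalled in Remark~\ref{remark.module-kernel}. The zero complex is a zero object of $\chain$, so a module which is objectwise zero is both initial and terminal: for any module $N$, the mapping complexes $\hom(\Xi(\emptyset), N)$ and $\hom(N, \Xi(\emptyset))$ are contractible (their underlying complexes of $A_\infty$ pre-natural transformations are built out of $\hom_\chain$ into or out of zero complexes, and hence are zero). Therefore $\Xi(\emptyset)$ is a zero object.

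I do not expect any serious obstacle here; the content of the lemma is really a sanity check that the Floer-theoretic definition of $\Xi$ matches up with the categorical characterization of zero objects given in Section~\ref{section.recollections}. The only item that deserves a line of care is noting that $\emptyset \in \ob \lag$ is genuinely a zero object on the domain side (as recalled in Example~\ref{example.zero} from \cite{nadler-tanaka}), so that the statement ``preserves zero objects'' is well-posed; then the argument above verifies the analogous property on the target.
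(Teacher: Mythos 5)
Your proposal is correct and follows the same route as the paper: the paper's proof also just notes that $\emptyset$ has no intersections with $X \times \beta^n$ for any test object $X$, so $\Xi(\emptyset)(X)$ is the zero complex and hence $\Xi(\emptyset)$ is the zero object of $\wrap_\Lambda(M)\Mod$. Your additional remarks (vanishing of the higher structure maps, objectwise characterization of the zero object, and $\emptyset$ being a zero object of $\lag$) simply spell out steps the paper leaves implicit.
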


\begin{proof}
Recall from~\cite{nadler-tanaka} that $L=\emptyset$ is a 0 object of $\lag_\Lambda(M)$. Well, the empty brane has no intersection with $X \times \beta^n$ for any test object $X$ and for any $n$; hence $\Xi(L)(X)$ is the 0 cochain complex. In other words, $\Xi$ sends the empty brane to the 0 object of $\wrap\Mod$.
\end{proof}

\subsubsection{On edges}\label{section.1-simplex}
An edge in $\lag_\Lambda(M)$ is a brane $Y \subset M \times T^*E^n \times T^*F$; it must satisfy a collaring condition so that $Y$ eventually equals $F \times L_0$ and $F \times L_1$. Because of this collaring, we can modify $Y$ by adjoining two ``tails'' to $Y$---rather than extending $F \times L_0$ indefinitely, we replace the zero section by a curve in $T^*F$ which approaches negative infinity at either collared end. This is depicted in Figure~\ref{figure.Y-BY}. This new brane is denoted $B(Y) \subset M \times T^*E^n \times T^*F$ in~\cite{tanaka-pairing}, and it is diffeomorphic to $\ker Y \subset M \times T^*E^{n+1}$; the identity bijection $E \cong \RR \cong F$ identifies $B(Y)$ with $\ker Y$, but we treat the two as living in two different manifolds.

Then we must product a morphism of modules $\Xi(L_0) \to \Xi(L_1)$. To define this map, given a test object $X$, consider the Floer cochain complex
	\eqnn
		CF^*(X \times \beta_0^n \times \gamma, B(Y)). 
	\eqnd
By the assumption that $Y$ is $\Lambda$-non-characteristic, one can prove that there is a curve $\gamma$ with negative enough $F^\vee$ coordinate such that---as a  graded abelian group---this Floer complex is isomorphic to
	\eqnn
		CF^*(X \times \beta_0^n, L_0) \oplus 
		CF^*(X \times \beta_0^n, L_1)[-1]
	\eqnd
and the differential is as follows:
	\eqnn
            \left(
            \begin{array}{ccc}
              d_{(X,L_0)} & 0 \\
              \Xi(Y)_X & -d_{(X,L_1)} 
            \end{array}
            \right)
	\eqnd
Here, $d_{(X,L_i)}$ is the differential of the cochain complex $CF^*(X,\beta_0^n, L_i)$. This upper-triangular form is an outcome of the directionality we mentioned in Remark~\ref{remark.directionality}, which we discuss further in Section~\ref{section.directionality} below.

Now, note that the lower-left entry of the matrix, $\Xi(Y)_X$, is necessarily a chain map from $CF^*(X \times \beta^n_0, L_0)$ to $CF^*(X\times \beta_0^n, L_1)$ because the matrix must square to zero. One can prove that this linear map is natural in $X$---it is the first of many operations that defines a map of modules $\Xi(Y): \Xi(L_0) \to \Xi(L_1)$. 

\begin{remark}
Also note that the cochain complex above is not the cochain complex $\Xi(\ker Y)(X)$---the latter is given by
	\eqnn
		CF^*(X \times \beta_0^{n+1}, B(Y)).
	\eqnd
The two are related by choosing a Hamiltonian isotopy which takes the relevant parts of $\gamma$ to $\beta_0$.
\end{remark}

\subsubsection{On 2-simplices}
A 2-simplex in $\lag$ is given by a brane $Y \subset M \times T^*E^n \times T^*F^2$, collared as pictured in Figure~\ref{figure.BY-2-simplex}. The collaring conditions are as follows:
	\enum
		\item $F_1$ is negative, $Y$ is collared by some morphism $Y_{02} : L_0 \to L_2$. 
		\item Where $F_1$ is positive, $Y$ is collared by the composite morphism $Y_{12} \circ Y_{01}$. 
		\item Where $F_2$ is negative, $Y$ is collared by the object $L_0$, and
		\item Where $F_2$ is positive, $Y$ is collared by the object $L_2$.
	\enumd
Then one constructs a new brane $B(Y)$, living in the same manifold as $Y$. Informally, this is constructed in four steps:
	\enum
		\item Rotate $Y_{01}$ so that the $Y_2$ component turns into the $Y_1$ component. 
		\item Since $Y_{01}$ is collared by $L_1$ on one end, we can ``fill out'' the rotated $Y_{01}$ by a copy of $L_1$ times the zero section.
		\item Next, just the copy of $Y_{12}$ by the zero section $F_1$.
		\item Now replace the collared ends of the resulting brane by tails---informally, if $l \subset T^*F$ is the tail used in constructing the cone, one replaces the collared ends by $l^2$.
	\enumd
Now, to construct the 2-simplex in $\wrap\Mod$, we again discuss what we do on a test object $X \subset M$. One considers the Floer cochain complex
	\eqnn
		CF^*(X \times \beta^n \times \gamma^2, B(Y))
	\eqnd
which breaks up as a direct sum
	\eqnn
		CF^*(X \times \beta^n, L_0)
		\oplus
		CF^*(X \times \beta^n, L_1)[-1]
		\oplus
		CF^*(X \times \beta^n, L_2)[-1]
		\oplus
		CF^*(X \times \beta^n, L_2)[-2]
		\oplus
	\eqnd
and whose differential can be written as follows:
	\eqn\label{eqn.2-simplex-differential}
        \left(
        \begin{array}{cccc}
        d_{(X,L_0)}  & 0&0&0\\
        \Xi(Y_{01})  & -d_{(X,L_1})  &  0&0\\
        \Xi(Y_{02})  & 0  &  -d_{(X,L_2)} & 0 \\
        \Xi(Y)  & \Xi(Y_{12})  &  -\id & d_{(X,L_2)}
        \end{array}
        \right)
	\eqnd
This matrix can be drawn as follows, ignoring the diagonal terms, which are implicit:
	\eqnn
		\xymatrix{    		
    		(X,L_2)[-1] \ar[r]^{-\id} & (X,L_2)[-2] \\
    		(X,L_0) \ar[u]^{\Xi(Y_{02})} \ar[r]_{\Xi(Y_{01})} \ar[ur]^{\Xi(Y)}& (X,L_1)[-1] \ar[u]_{\Xi(Y_{12})}
		}
	\eqnd
The notation $(X,L_i)$ is shorthand for $CF^*(X \times \beta^n_0, L_i)$.
Since this differential squares to zero, we obtain the relation
	\eqnn
		[d,\Xi(Y)] = \Xi(Y_{12}) \circ \Xi(Y_{01}) - \Xi(Y_{02}).
	\eqnd
Thus $\Xi(Y)$ is indeed a 2-simplex realizing the homotopy between the composite $\Xi(Y_{12}) \circ \Xi(Y_{01})$ and $\Xi(Y_{02})$.

\begin{figure}
		\[
			\xy
			\xyimport(8,8)(0,0){\includegraphics[width=5in]{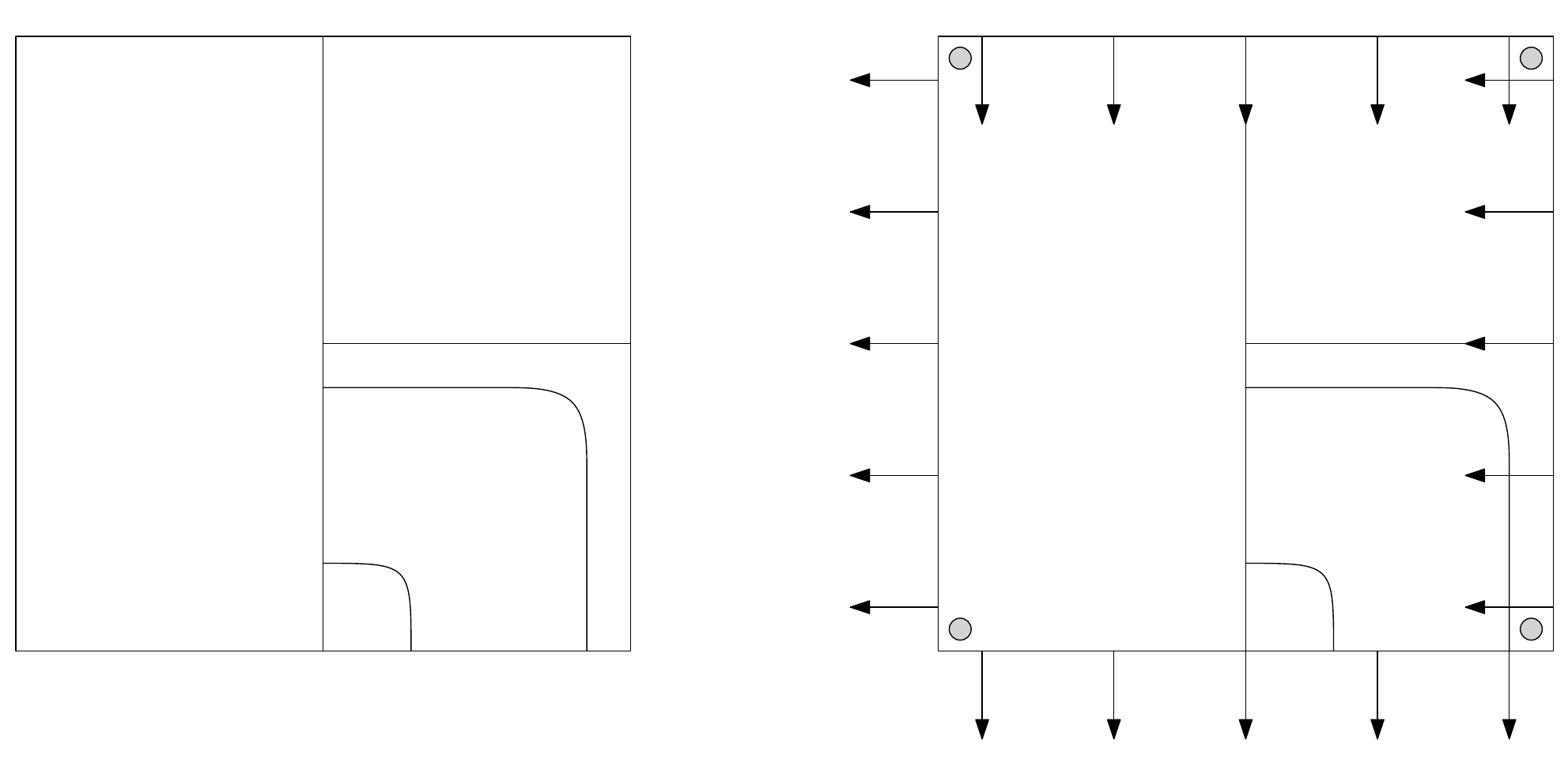}}
			,(0,0.8)*+{L_0}
			,(1.9,1.5)*+{L_0}
			,(3.1,4.1)*+{L_1}
			,(3.3,0.8)*+{L_1}
			,(3.3,8)*+{L_2}
			,(0,8)*+{L_2}
			,(2.3,3)*+{Y_{01} \text{ rotated} }
			,(0.9,4.5)*+{Y}
			,(-0.1,4.5)*+{Y_{01}}
			,(2.4,6)*+{Y_{12} \times F_1}
			,(4.5,0.8)*+{(X,L_0)}
			,(4.7,8)*+{(X,L_2)[-1]}
			,(8.5,0.8)*+{(X,L_0)[-1]}
			,(8.5,8)*+{(X,L_2)[-2]}
			\endxy
			\]
\begin{image}\label{figure.BY-2-simplex}
An image of $B(Y)$ for $Y$ a 2-simplex. The righthand picture depicts the $(F^\vee)^2$ coordinates (as vector fields) and the generators of $CF^*(X \times \beta^n_0 \times \gamma^2, B(Y))$ (labeling the grey dots). Both are drawings in $F^2$, with the $F^\vee$ components omitted. The arrows are dual to the $(F^2)^\vee$ cotangent coordinates, and the grey dots represent where $(X \times \beta_0^n) \times \gamma^2$ intersects $B(Y)$.
\end{image}
\end{figure}

\subsubsection{Directionality}\label{section.directionality}
There are two things we'd like to emphasize about the recollection: The differentials associated to $B(Y)$---whether $Y$ be a 2-simplex or a 1-simplex---have two commonalities:
	\enum
		\item The matrix is upper-triangular, and
		\item The entries of the matrix are often terms we recognize. For instance, in the case $Y$ is a 2-simplex, three of the matrix entries are terms arising from the 1-simplices $Y_{ij}$. And along the diagonal, one recognizes the differentials $d_{(X,L_i)}$. 
	\enumd
Both are consequences of the collaring condition on our branes and our Floer perturbation data. Roughly, in regions near our intersection points, we know that the branes and the perturbation data (Hamiltonians and almost-complex structures) split as a direct product.\footnote{That this direct product structure is regular was proven in~\cite{tanaka-pairing}.}

The power of this is that the projection map to any of the $T^*E$ or $T^*F$ factors is holomorphic---in particular, given any pseudoholomorphic map $u: S \to M \times T^*E^n \times T^*F^N$, the composite with a projection to $T^*E$ or $T^*F$ is an honest holomorphic map in some open subset of $S$.

This allows us to use the open mapping theorem, which in turn allows us to use the boundary-stripping arguments of~\cite{tanaka-pairing}. This allows us to conclude that---in a neighborhood of a Lagrangian intersection point---the directional derivative of $u$ must be non-negative in all $E$ or $F$ components. This is what guarantees property (1): the lower-triangular property, or the ``directionality,'' of the Floer differentials. When the directional derivative is zero, the map $u$ is only non-constant in an orthogonal factor, hence one can recover a differential or $\Xi(Y)$ term computed in lower dimensions. This explains property (2).

Finally, note that the curves $\gamma \subset T^*F$ are chosen to have very negative $F^\vee$ coordinate near $B(Y)$. As a result, when we draw an image of a brane $Q \subset M \times T^*F^2$ with an accompanying vector field, the Floer complex $CF^*(X \times \gamma^2, Q)$ only has generators in regions of $Q$ where the vector field has arbitrarily negative $F_i^\vee$ components. For instance, in Figure~\ref{figure.BY-2-simplex}, the grey intersection dots only appear where the vector fields have large negative values in both $F_i^\vee$ components---i.e., the corners.

\subsection{Hamiltonian isotopies}\label{section.hamiltonian}
When studying compact Lagrangians, we consider two to be equivalent if they are related by a (possibly time-dependent) Hamiltonian isotopy. However, in the context of convex symplectic geometry and convex, non-compact, Lagrangians, one must specify the type of Hamiltonians we consider beyond the compactly-supported type.

We let $H_t: M \to \RR$ be a time-dependent Hamiltonian such that 
$H_t(x) = 0$ for $|t| >>0$.
We define
	\eqnn
	\Phi: \RR \times M \to M,
	\qquad
	(t,x) \mapsto \Phi_t(x)
	\eqnd
where $\Phi_t$ is the result of flowing along the time-dependent Hamiltonian $H_t$ for time $t$. 

\begin{defn}
Fix a brane $L \subset M$. We say that the Hamiltonian is {\em convex for $L$} if the trace (aka the suspension)
	\eqnn
	Q = \{(\Phi_t(x), t, -H_t(\Phi_t(x))) \text{ such that $x \in L$}\} \subset M \times T^*F
	\eqnd
 of the Hamiltonian flow is a convex brane---i.e., $Q$ is exact and its primitive vanishes along $\del Q \subset \del(M \times T^*\RR$). 
\end{defn}

This puts a large restriction on the behavior of $H_t$ near $\del M$. For instance, on $\del Q$, one must have the equation
	\eqn\label{eqn.convex-H}
	\theta_M(X_{H_t}) = H_t.
	\eqnd

\begin{example}
If $M = T^*\RR^n$ with $\theta = \sum_i p_i dq_i$, such an $H$ must only have a {\em linear} dependence on the $p_i$ coordinates near $\del Q$. That is, near the boundary of $M$, and where $L$ flows, $H_t$ must have the form
	\eqn\label{eqn.convex-H-local}
	\sum p_i f_i(q_1,\ldots,q_n, t)
	\eqnd
where $f_i$ are smooth functions with the indicated possible dependencies.
This is a straightforward consequence of the equation (\ref{eqn.convex-H}), which in this context reduces to the series of equations
	\eqnn
	p_i {\frac {\del H} {\del p_i}} = H
	\qquad
	i = 1, \ldots, n
	\eqnd
near $\del M$. If one assumes that $f_i$ are constants, one can use such examples to prove that a conormal at point $q$ is cobordant to the conormal of another point $q' \in \RR^n$; and in fact, equivalent in $\lag$ by noting that these cobordisms avoid $\Lambda$ at $\pm \infty dt$.
\end{example}

Now, the rigidity. The following implies, for instance, that the conormals to non-isotopic submanifolds are not related by flows of convex Hamiltonians. Also, the antidiagonal inside $T^*\RR^{2n}$ is the conormal of a point.

\begin{prop}
If $M = T^*\RR^n$, let $\pi: M \to \RR^n$ be the projection to the zero section. If the boundaries of  two submanifolds $L_0, L_1\subset M$ do note have isotopic projections $\pi(\del L_i)$, then the $L_i$ are not related by a convex Hamiltonian flow.
\end{prop}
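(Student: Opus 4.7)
The plan is to leverage the explicit local form~(\ref{eqn.convex-H-local}) of a convex Hamiltonian near $\del M$, namely $H_t = \sum_i p_i f_i(q_1,\ldots,q_n,t)$, which is linear in the momentum coordinates. Computing the Hamiltonian vector field from this expression, the $q$-components are $\dot q_i = \del H_t / \del p_i = f_i(q,t)$, and in particular depend only on $(q,t)$ and not on $p$. It follows that the flow $\Phi_t$ of $H_t$ intertwines the projection $\pi: T^*\RR^n \to \RR^n$ with the flow $\phi_t$ of the time-dependent vector field $(f_1,\ldots,f_n)$ on $\RR^n$; since $H_t$ is assumed to vanish for $|t|$ large, $\phi_t$ is a genuine smooth ambient isotopy of $\RR^n$, equal to the identity outside a compact time interval.

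Given this, I would argue as follows. Convexity of $L_0$ places its ideal boundary $\del L_0$ inside the region near $\del M$ where the local form above applies. Suppose $L_0$ and $L_1$ were related by a convex Hamiltonian flow $\Phi_t$, so that $L_1 = \Phi_1(L_0)$. Then $\del L_1 = \Phi_1(\del L_0)$ also lies in this region, and applying $\pi$ to both sides yields $\pi(\del L_1) = \phi_1(\pi(\del L_0))$. The isotopy $\{\phi_t\}_{t \in [0,1]}$ therefore witnesses an ambient isotopy of $\RR^n$ carrying $\pi(\del L_0)$ to $\pi(\del L_1)$, so the two projections are isotopic. The proposition is then the contrapositive.

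The hard part, I expect, is formalizing the geometry of $\del M$ and $\del L$ precisely in the convex-symplectic setup, and verifying that the local expression~(\ref{eqn.convex-H-local}) really holds throughout the trace of $\del L_0$ under $\Phi_t$, not merely at its endpoints. One should also check that the resulting ambient vector field $(f_1,\ldots,f_n)$ extends smoothly to all of $\RR^n$, rather than only to the image of a collar. Once these foundational points are settled, the intertwining $\pi \circ \Phi_t = \phi_t \circ \pi$ follows mechanically from the Euler relation $p_i\,\del H/\del p_i = H$ already derived in the preceding example, and the conclusion of the proposition is immediate.
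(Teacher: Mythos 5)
Your proof is correct and follows essentially the same route as the paper: extract the linear-in-$p$ local form of a convex Hamiltonian near the boundary, observe that the $q$-component of its Hamiltonian vector field is $\sum_i f_i \,\del/\del q_i$ and hence independent of the momentum coordinates, and conclude that $\pi$ intertwines the Hamiltonian flow with an isotopy of $\RR^n$ carrying $\pi(\del L_0)$ to $\pi(\del L_1)$. The paper's proof is exactly this commuting-diagram argument, so no further comparison is needed.
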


\begin{proof}
Let $Q$ be the suspension of a flow by some $H_t$. For $H_t$ to be convex, we know $H_t$ must have the form as indicated in (\ref{eqn.convex-H-local}) near the boundary of $\del Q$. This has associated Hamiltonian vector field
	\eqnn
	X_{H_t}
	=	\sum_i f_i {\frac \del {\del q_i}}
	-
	\sum_{i,j} p_i {\frac {\del f_i} {\del q_j}} {\frac \del {\del p_j}}.
	\eqnd
The flow defined by $X_{H_t}$ only has a dependence on the $q$ coordinates in the $q$ component, hence the following diagram commutes:
	\eqnn
	\xymatrix{
	\RR \times \del(T^*\RR^n) \ar[rr]^{\text{flow along}}_{X_{H_t}} \ar[d]^{\id_\RR \times \pi}
	&& \del(T^*\RR^n) \ar[d]^\pi \\
	\RR \times \RR^n \ar[rr]^{\text{flow along}}_{\sum_i f_i {\frac {\del}{\del q_i}}} 
	&&  \RR^n.
	}
	\eqnd 
Thus if $L_0$ and $L_1$ are related by the flow of $H_t$, the projections of their boundaries are related by a flow in $\RR^n$, hence isotopic.
\end{proof}

Clearly, the above proof patches together to prove the same result for any convex manifold that can be covered by convex manifolds of the form $T^*\RR^n$. In other words, for cotangent bundles.

\begin{corollary}
If $M = T^*B$, let $\pi: M \to B$ be the projection to the zero section. If the boundaries of two submanifolds $L_0, L_1 \subset M$ do note have isotopic projections $\pi(\del L_i)$, then the $L_i$ are not related by a convex Hamiltonian flow.
\end{corollary}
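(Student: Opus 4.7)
The plan is to reduce the statement to the already-proved proposition by covering the base $B$ with coordinate charts. Choose an atlas $\{U_\alpha\}$ for $B$ with $U_\alpha \cong \RR^n$, yielding cotangent charts $T^*U_\alpha \cong T^*\RR^n$ on $M = T^*B$. Since the convexity identity $\theta_M(X_{H_t}) = H_t$ is a chart-independent equation of functions on $M$, it restricts to the analogous equation on each $T^*U_\alpha$, so the argument leading to~(\ref{eqn.convex-H-local}) applies chart-by-chart and forces $H_t$ to take the fiber-linear form $\sum_i p_i f_i^{(\alpha)}(q,t)$ near $\del Q$ in each local coordinate system.

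The key observation is that fiber-linearity of a function on $T^*B$ is a coordinate-free condition: smooth functions on $T^*B$ that are linear in the cotangent directions are in canonical bijection with smooth vector fields on $B$. Thus the locally defined expressions $\sum_i f_i^{(\alpha)}(q,t) \,\del/\del q_i$ patch, under the cotangent-bundle transition maps, to a globally well-defined time-dependent vector field $V_t$ on a neighborhood in $B$ of $\pi(\del Q)$. Moreover, the Hamiltonian flow of such a fiber-linear function is precisely the cotangent lift of the flow of the associated vector field on the base.

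Next, exactly as in the proof of the proposition, one obtains a commutative diagram
\begin{equation*}
\xymatrix{
\RR \times \del(T^*B) \ar[rr]^{\text{flow of } X_{H_t}} \ar[d]^{\id_\RR \times \pi} && \del(T^*B) \ar[d]^\pi \\
\RR \times B \ar[rr]^{\text{flow of } V_t} && B
}
\end{equation*}
so the $\pi$-image of the $X_{H_t}$-flow on $\del(T^*B)$ coincides with the $V_t$-flow on $B$. If $L_0$ and $L_1$ were related by the $H_t$-flow, then their boundaries would be related by the $X_{H_t}$-flow on $\del(T^*B)$, and projecting by $\pi$ would produce an isotopy between $\pi(\del L_0)$ and $\pi(\del L_1)$, contradicting the hypothesis.

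The main obstacle I expect is the patching step, where one must verify that the locally defined $f_i^{(\alpha)}(q,t)$ genuinely transform as the components of a global vector field on $B$ under changes of chart. This is standard but worth spelling out: a fiber-linear function on $T^*B$ is invariantly the pairing with a section of $TB$, a statement independent of any trivialization, so the fiber-linearity produced chart-by-chart automatically assembles to a globally defined $V_t$ on a neighborhood of $\pi(\del Q) \subset B$. Once this is in place, the remainder of the argument is a verbatim transcription of the proposition's proof.
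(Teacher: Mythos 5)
Your argument is essentially the paper's own: the paper proves the corollary by remarking that the $T^*\RR^n$ proof ``patches together'' over a chart cover of $B$, which is exactly your reduction. Your observation that fiber-linearity is coordinate-free (fiber-linear Hamiltonians correspond to vector fields on $B$, whose cotangent lifts give the flow) is the correct way to make that patching precise, so the proposal is correct and matches the intended proof.
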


\begin{example}
This is in contrast to the fact that an arbitrary Hamiltonian flow can actually take a conormal to a point and flow it to the zero section---for instance, via the Hamiltonian $H = p^2 + q^2$.
\end{example}

\subsection{Inverse cobordisms}\label{section.inverses}
If $Y \subset M \times T^*F$ is a cobordism which is vertically bounded---i.e., which is bounded in the $F^\vee$ component---then it is an invertible morphism in $\lag_\Lambda(M)$. In particular, $\Xi(Y): \Xi(L_0) \to \Xi(L_1)$ is an invertible map of modules. In fact, one can product an inverse morphism explicitly: The orientation-reversing diffeomorphism $t \mapsto -t$ induces a symplectomorphism $T^*\RR \to T^*\RR$, and the image of $Y$ under this symplectomorphism is called $\overline{Y}$, and this is an inverse to $Y$. We show two ways in which we can see this in Figure~\ref{figure.inverse-cobordism}.

\begin{figure}
		\[
			\xy
			\xyimport(8,8)(0,0){\includegraphics[height=2in]{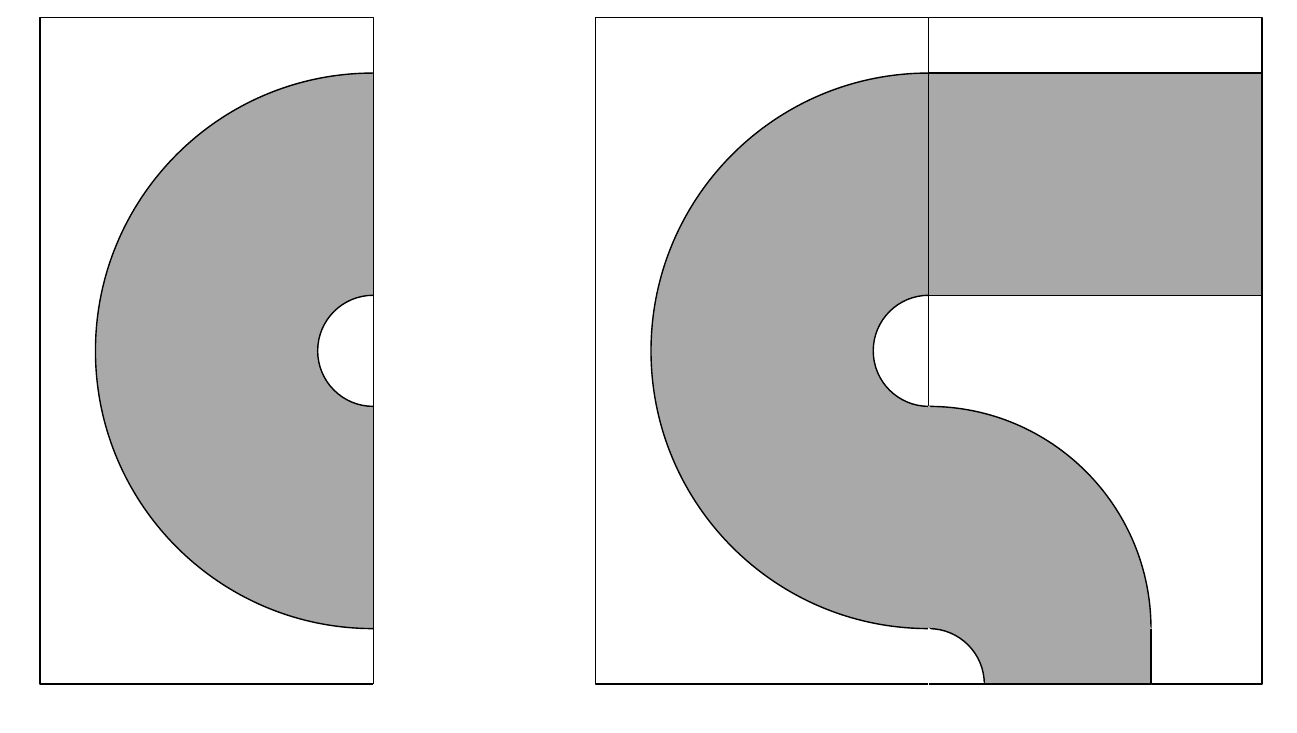}}
			,(1.2,0)*+{(a)}
			,(5.4,0)*+{(b)}
			,(2.4,4.2)*+{L_1}
			,(2.2,2.4)*+{Y}
			,(2.2,6)*+{\overline{Y}}
			,(0.7,1)*+{L_0}
			,(7,4)*+{L_1}
			,(4,0.9)*+{L_0}
			,(5.7,2.4)*+{Y}
			,(5.7,6)*+{\overline{Y}}
			\endxy
			\xy
			\xyimport(8,8)(0,0){\includegraphics[height=2in]{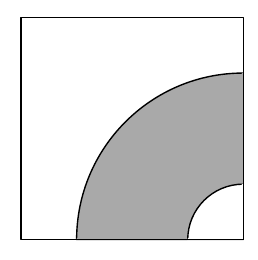}}
			,(4.5,0)*+{(c)}
			,(7,1)*+{L_1}
			,(1.85,6)*+{L_0}
			,(4.5,0.9)*+{Y}
			,(7.3,4)*+{\overline{Y}}
			\endxy
		\]
\begin{image}\label{figure.inverse-cobordism}
(a) depicts a 2-simplex in $\lag$. If $Y$ is a vertically bounded cobordism from $L_0$ to $L_1$, a full rotation (as pictured in the grey region) results in a higher cobordism showing that $\id_{L_0}$ is homotopic to $\overline{Y} \circ Y$, where $\overline{Y}$. (b) Is the result of applying the $B$ construction to this higher cobordism. (c) More directly, in (c) is depicted a brane living over a square; pairing this with $X \times \gamma^2$ shows that $\overline Y \circ Y$ is homotopic to $\id_{L_0}$. The chain map resulting from (c) is homotopic to the one resulting from (b), as these are each clearly isotopic to each other.
\end{image}
\end{figure}

\begin{figure}
		\[
			\xy
			\xyimport(8,8)(0,0){\includegraphics[width=2.5in]{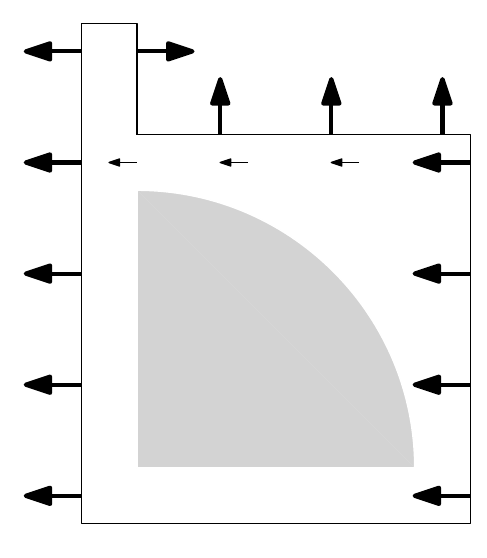}}
			,(1.9,1)*+{L_0}
			,(1.85,7)*+{L_1}
			,(6,5)*+{L_1}
			,(1.8,3)*+{Y}
			,(4.5,0.9)*+{Y}
			,(4.5,3)*+{Y}
			\endxy
		\]
\begin{image}\label{figure.h}
The morphism $h$ from $\ker Y$ to $L_1$. Note that it factors through a brane $L_1 \times b$, where $b$ is drawn by the small horizontal arrows running to the left. The vertical direction is the $F$ direction, in which the morphism propagates. The horizontal direction is the $E$ direction.
\end{image}
\end{figure}

\begin{figure}
		\[
			\xy
			\xyimport(8,8)(0,0){\includegraphics[height=2.7in]{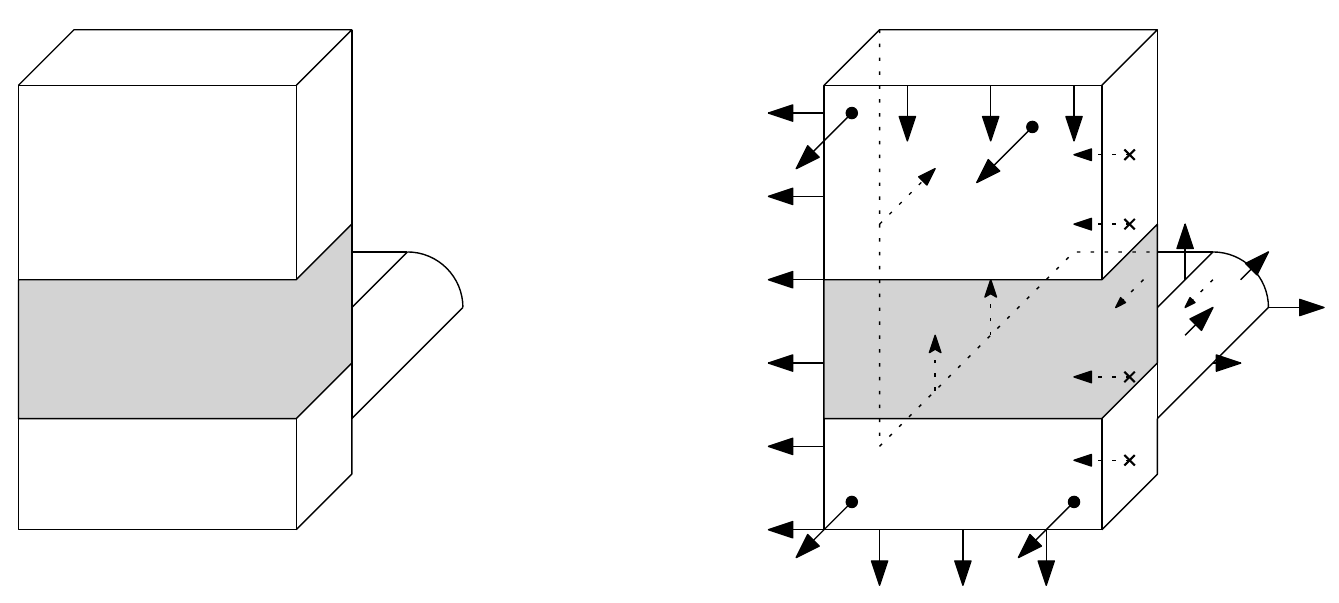}}
			,(1,1.4)*+{L_0}
			,(1,3.5)*+{T}
			,(1,5.5)*+{L_1}
			,(1,-0.5)*+{(a)}
			,(6,-0.5)*+{(b)}
			\endxy
		\]
		\[
			\xy
			\xyimport(8,8)(0,0){\includegraphics[height=2in]{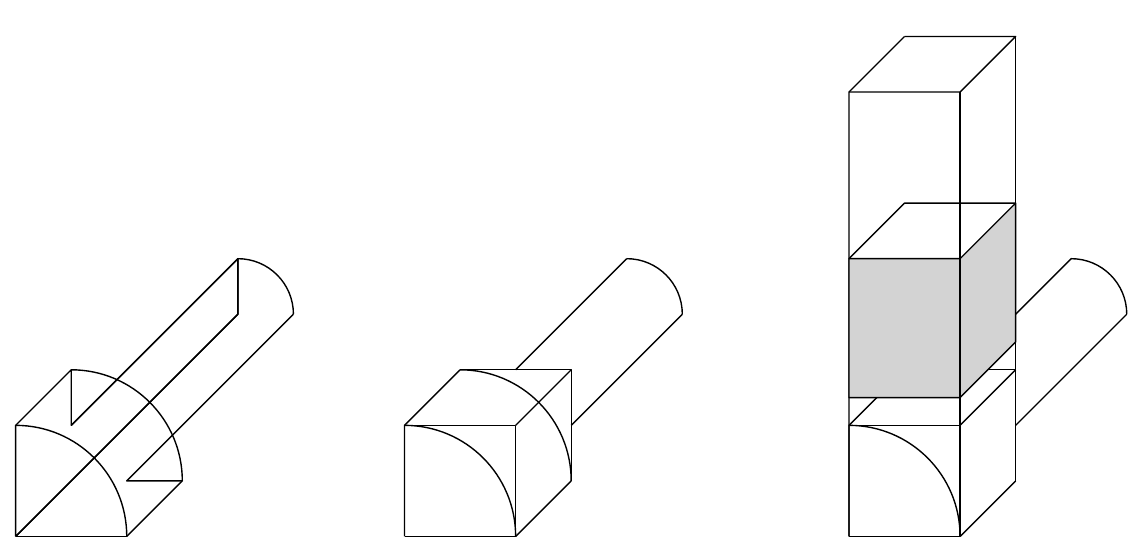}}
			,(0.7, -0.5)*+{(c)}
			,(3.3, -0.5)*+{(d)}
			,(6, -0.5)*+{(e)}
			\endxy
		\]
\begin{image}\label{figure.T1}
At the top (a) is picture of triangle $B(T_1)$, where $T_1$ is the triangle  in~(\ref{eqn.lag-kernel}). The grey region is a copy of the morphism $Y \subset M \times T^*F_2$ with a brane in $T^*(E \times F_1)$. (b) depicts the components of $E^\vee$ and $(F^2)^\vee$ as vector fields. At the bottom is an illustration of how to visualize the $B$ construction of $T_1$ as pictured. (c) is a picture of $j$ being rotated, (d) fills out the rotation with a copy of $L_0$, and (e) attaches a product of $Y^\dd$, as prescribed by the definition of $B$ in~\cite{tanaka-pairing}.
\end{image}
\end{figure}

\begin{figure}
		\[
			\xy
			\xyimport(8,8)(0,0){\includegraphics[width=2.5in]{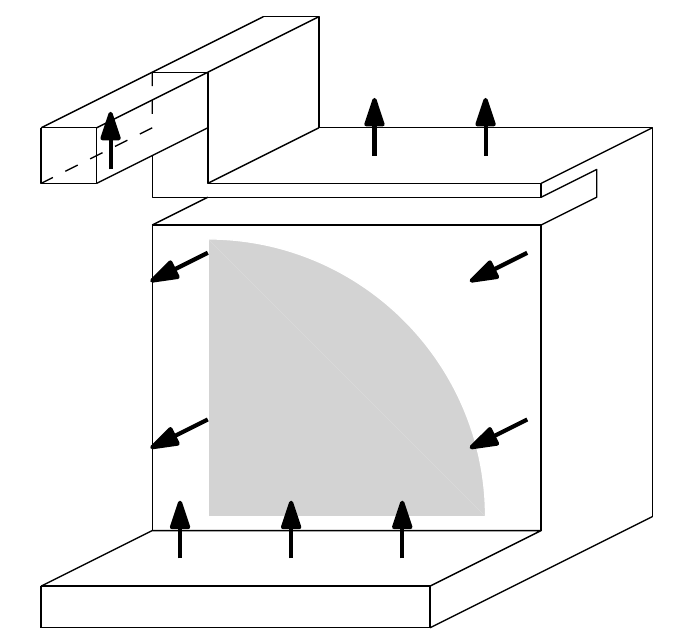}}
			\endxy
		\]
		\eqnn
			\xy
			\xyimport(8,8)(0,0){\includegraphics[width=5.5in]{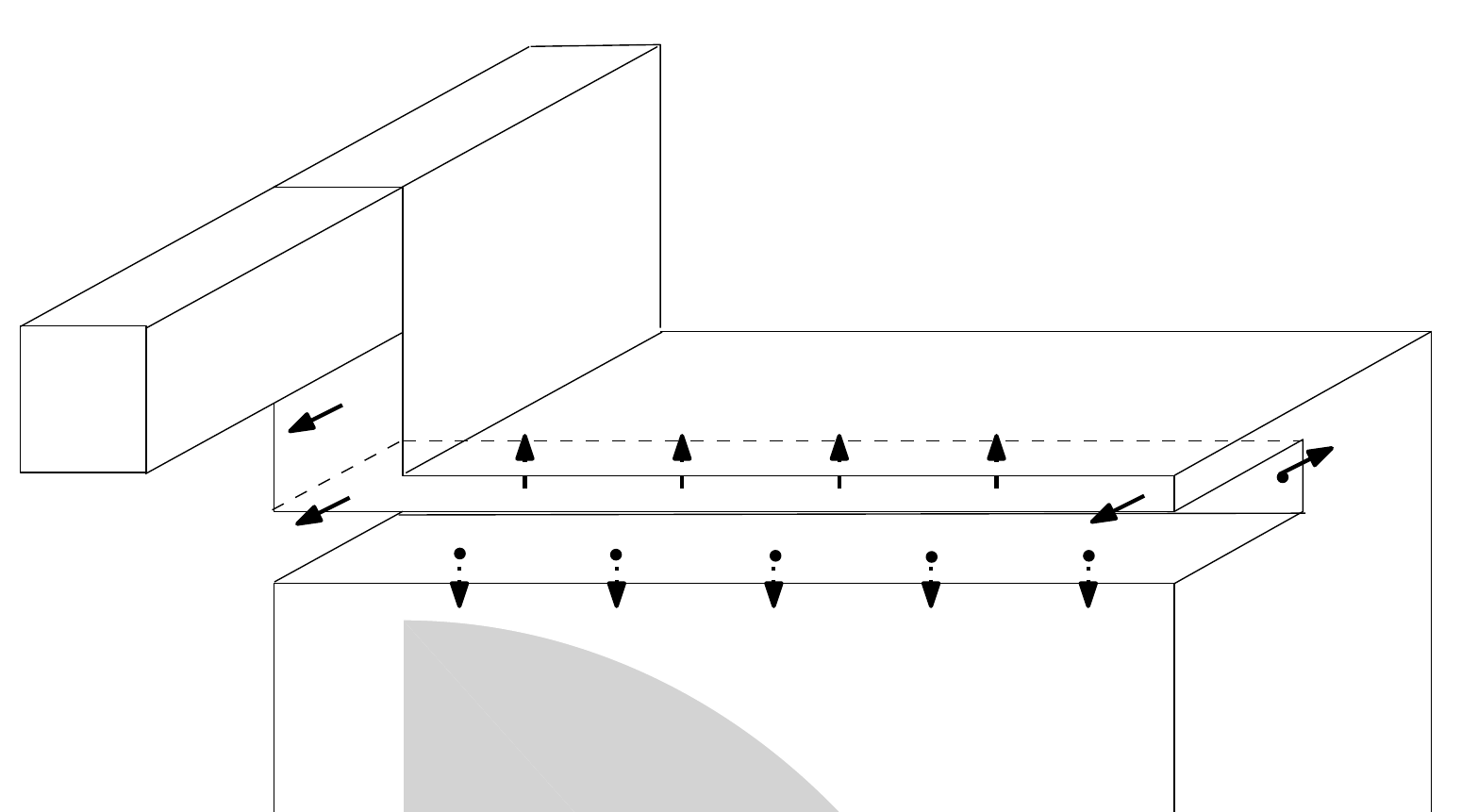}}
			\endxy
		\eqnd
\begin{image}\label{figure.T2}
On the top, an image of the cobordism defining the 2-simplex $T_2$. The obvious portion is zoomed in and displayed on the bottom. 
\end{image}
\end{figure}

\clearpage

\section{$T_1$, geometrically: The equivalence (\ref{eqn.T1-equivalence-1})}
Throughout, we fix some test object $X \in \ob \wrap$. The goal of this section is to prove

\begin{lemma}\label{lemma.T1-equivalence-1}
There is an equivalence of diagrams between $\Xi(T_1)(X)$ and the upper-right triangle in~(\ref{eqn.chain-kernel}). 
\end{lemma}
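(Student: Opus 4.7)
The plan is to produce the equivalence in two stages: first identify the vertex $\Xi(\ker Y)(X)$ with the chain complex $\ker \Xi(Y)_X$ appearing in the upper-right triangle of (\ref{eqn.chain-kernel}), then verify that under this identification the entire 2-simplex $\Xi(T_1)(X)$ matches the standard kernel triangle. In doing so I can treat the other two vertices $(X,L_0)$ and $(X,L_1)$ as identified by the identity.

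For the vertex identification, I would exploit the fact that $CF^*(X \times \beta_0^n \times \gamma, B(Y))$ was already computed in Section~\ref{section.1-simplex} to be $(X,L_0) \oplus (X,L_1)[-1]$ with the lower-triangular differential having $\Xi(Y)_X$ as its off-diagonal entry---this is literally the cochain complex $\ker \Xi(Y)_X$ as in (\ref{eqn.kernel-X}). The vertex $\Xi(\ker Y)(X) = CF^*(X \times \beta_0^{n+1}, B(Y))$ is related to this via a compactly supported Hamiltonian isotopy taking $\gamma$ back to $\beta_0$ on the relevant portion of $B(Y)$. By the directionality established in Section~\ref{section.directionality} the resulting continuation strip count is lower triangular, and because the isotopy is the identity outside a compact region its diagonal blocks are identities, giving an explicit chain equivalence $\Xi(\ker Y)(X) \simeq \ker \Xi(Y)_X$.

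For the 2-simplex analysis I would apply formula (\ref{eqn.2-simplex-differential}) to $T_1$ with vertex assignment $L_0 \mapsto \ker Y$, $L_1 \mapsto L_0$, $L_2 \mapsto L_1$, and edge assignment $Y_{01} \mapsto j$, $Y_{12} \mapsto Y$, $Y_{02} \mapsto h$. The entry $\Xi(j)_X$ should reduce, under the above identification, to the projection $p$ of (\ref{eqn.p}), by the geometric description of $j$ in Figure~\ref{figure.j}, where $j$ exhibits $\ker Y$ as fibered over $L_0$ plus a ``tail'' piece contributing nothing to the $(X,L_1)[-1]$ factor. The diagonal edge $h$, which factors through $L_1 \times b$ per Figure~\ref{figure.h}, realizes the composite $\Xi(Y)_X \circ p$. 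Finally, the homotopy $\Xi(T_1)_X$: because $T_1$ is geometrically $Y^\dd \circ j$ thickened by the zero section of $F_1$ (Figure~\ref{figure.T1}), boundary-stripping arguments from~\cite{tanaka-pairing} applied to the $F_1$-projection reduce the strips contributing to $\Xi(T_1)_X$ to those whose $F_1$-projection is constant. Modulo the vertex identification this produces exactly the identity map from the $(X,L_1)[-1]$ summand of $\ker \Xi(Y)_X$ onto the target $(X,L_1)$, which matches the homotopy $R$ of (\ref{eqn.R}) filling the upper-right triangle of (\ref{eqn.chain-kernel}).

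The hard part will be the boundary-stripping argument that pins down $\Xi(T_1)_X$ exactly: for each pseudoholomorphic strip with boundary on $X \times \beta^n \times \gamma^2$ and $B(T_1)$, one must track the behavior of its projections to $T^*E$, $T^*F_1$, and $T^*F_2$ near the corner intersection points of the type in Figure~\ref{figure.BY-2-simplex}, and apply the open mapping theorem to force these projections to be constant wherever the geometry forbids nonconstant holomorphic behavior. A secondary check is compatibility between the continuation at $\ker Y$ and the edge maps $\Xi(j)_X$, $\Xi(h)_X$; this should follow from the same directionality because the continuation is a compactly supported perturbation of the identity on the $T^*E$ factor. Once these are done, assembling $\Xi(j)_X$, $\Xi(h)_X$, and $\Xi(T_1)_X$ into a 2-simplex and comparing it with the upper-right triangle of (\ref{eqn.chain-kernel}) yields the desired equivalence of diagrams.
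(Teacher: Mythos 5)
There is a genuine gap. Your plan tries to identify the edges and the filling of the geometric triangle $\Xi(T_1)(X)$ \emph{on the nose} with $p$, $f\circ p$, and a standard homotopy, after a vertex identification $\Xi(\ker Y)(X)\simeq \ker\Xi(Y)_X$. But the geometric triangle is computed from $CF^*(X\times\beta\times\gamma^2,B(T_1))$, i.e.\ with the test curve $\beta$, and near the $\ker Y$ region the intersections of $X\times\beta$ with $B(Y)$ are completely uncontrolled (this is exactly the point stressed in the outline: with $\beta$ one ``has no hope of comparing it to a known module''). The identifications of Floer differentials with $f$, $\pm\id$, $0$ via collaring and boundary-stripping are only available for the $\gamma$-side complex (that is the content of Lemma~\ref{lemma.T1-K-differentials} and Figure~\ref{figure.T1-isotopy}(b)); they do not apply to $\Xi(j)_X$, $\Xi(h)_X$, or $\Xi(T_1)_X$ themselves. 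Relatedly, your claim that the continuation map identifying the kernel vertex has ``identity diagonal blocks'' because the isotopy is compactly supported is unjustified: the two complexes do not even have corresponding generators near $\ker Y$ (intersections with $\beta$ versus intersections localized at the tails by $\gamma$), and the paper only proves the diagonal entries are chain \emph{equivalences}, by pairing with the suspension of the inverse isotopy (Lemma~\ref{lemma.phi-equivalence}).

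The second, structural gap is the coherence data. An equivalence of diagrams $\Delta^2\times\Delta^1\to\chain$ requires, beyond vertexwise equivalences, specified homotopies filling the quadrilateral side faces and a degree $-2$ element filling the interior of the prism~(\ref{eqn.T1-prism-1}); you defer these to ``a secondary check'' by directionality, but they are the main content and do not come for free once the edges only agree up to homotopy. The paper's device is to run the $\beta\to\gamma$ suspension-continuation on the \emph{whole} complex $CF^*(X\times\beta\times\gamma^2,B(T_1))$, producing a single lower-triangular chain map $\Phi$; the relation $d\Phi=\Phi d$ then hands you all the prism data at once ($\Phi_{20}$, $\Phi_{32}$, $\Phi_{31}h'+\Phi_{10}$ for the faces, $\Phi_{30}$ for the interior), with Lemma~\ref{lemma.phi-equivalence} supplying invertibility of the vertical edges. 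Without some mechanism of this kind your assembly step has no source for the face and interior fillers. (A small but symptomatic slip: $R$ of~(\ref{eqn.R}) fills the \emph{lower-left} triangle of~(\ref{eqn.chain-kernel}); the upper-right triangle you must hit is filled by the zero homotopy, with $h:=f\circ p$.)
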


Concretely, this means one can produce a map $\Delta^2 \times \Delta^1 \to \wrap\Mod$, which we depict as follows:
	\eqn\label{eqn.T1-prism-1}
		\xymatrix{
		&&& K(X) \ar[ddlll]^{\sim} \ar[r] \ar[dr] & \Xi(L_0)(X) \ar[d]\ar[ddlll]^{\sim} \\
		&&& & \Xi(L_1)(X) \ar[ddlll]^{\sim}\\
		\ker \Xi(Y)(X) \ar[r]^p \ar[dr]^h & \Xi(L_0)(X) \ar[d]^{\Xi(Y)_X} \\
		& \Xi(L_1)(X) &&&.
		}
	\eqnd
The lower-left triangle is specified by setting $h = \Xi(Y) \circ p$ and filling the triangle with the zero homotopy. The upper-right triangle is $\Xi(T_1)$ applied to $X$.

This equivalence of diagrams is constructed by considering a Hamiltonian isotopy of $\beta$ to $\gamma$. Concretely, let $I \subset E$ be a compact interval such that $B(T_1)$ is contained in $M \times T^*I \times T^*F^2$. We let $\beta|_I = \beta \cap T^*I \subset T^*E$ and choose a compactly supported Hamiltonian on $T^*E$ such that $\beta|_I$ is taken to $\gamma|_I$.

Then the Hamiltonian suspension of this isotopy can be made into a collared Lagrangian cobordism by making the Hamiltonian time-dependent, and equal to zero outside some compact time interval. We call this cobordism $Q'$. Taking the direct product with $X \times \gamma^2$, one obtains a cobordism
	\eqnn
		Q \subset M \times T^*E \times T^*F^2 \times T^*F
	\eqnd
from $X \times \beta \times \gamma^2$ to $X \times \gamma \times \gamma^2$. Now let $\overline{\gamma} \subset T^*E$ be the reflection of $\gamma$ about the zero section. For brevity, we let
	\eqnn
		(X,T_1)_{\beta} := CF^*(X \times \beta \times \gamma^2, T_1)
		\qquad
		\text{and}
		\qquad
		(X,T_1)_{\gamma} := CF^*(X \times \gamma \times \gamma^2, T_1).
	\eqnd
Computing the Floer complex of $Q$ with $T_1 \times \overline{\gamma}$, one finds
	\eqn\label{eqn.suspension-complex}
		CF^*(Q, T_1 \times \overline{\gamma})
		\cong
		(X,T_1)_{\beta} \oplus (X,T_1)_\gamma[-1].
	\eqnd
Now let us study the differential of this complex. As in Section~\ref{section.1-simplex}, the component of the differential from the $\beta$ component to the $\gamma$ component encodes a linear map
	\eqnn
		(X,T_1)_{\beta} \to (X,T_1)_\gamma
	\eqnd
and the matrix components of this map define the simplices of~(\ref{eqn.T1-prism-1}).

\begin{figure}
		\[
			\xy
			\xyimport(8,8)(0,0){\includegraphics[height=3in]{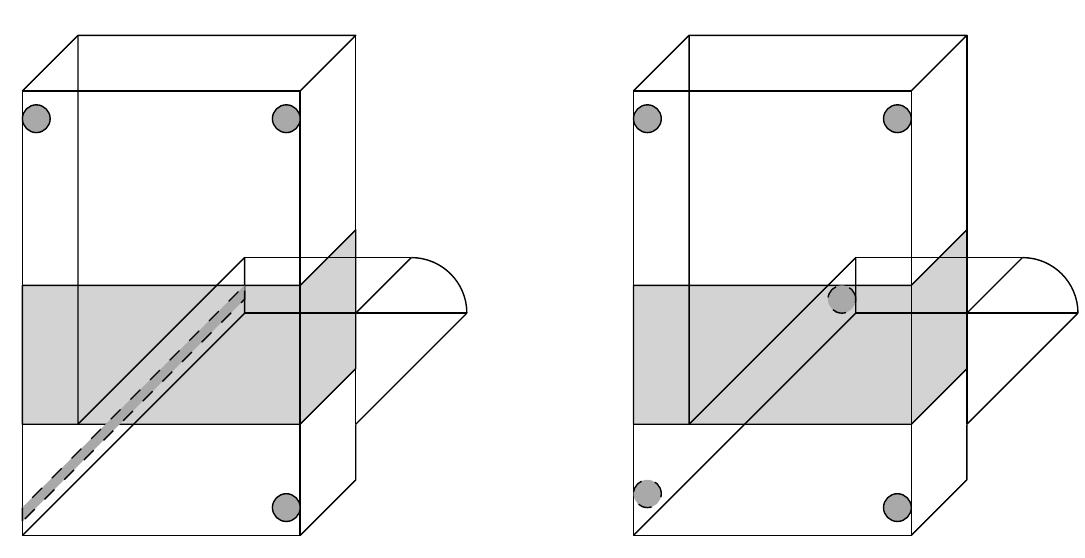}}
			,(0,0)+*{(X,\ker Y)}
			,(0,7)*+{(X,L_1[-1])}
			,(2,7)*+{(X,L_1[-2])}
			,(2,0)*+{(X,L_0[-1])}
			,(1.2, -0.5)*+{(a)}
			,(5.8, -0.5)*+{(b)}
			,(4.5,0)*+{(X,L_0)}
			,(5.7,4.2)*+{(X,L_1[-1])}
			,(4,7)*+{(X,L_1[-1])}
			,(6,7)*+{(X,L_1[-2])}
			,(6.5,0)*+{(X,L_0[-1])}
			\endxy
		\]
		\eqnn
			\xymatrix{
			\Xi(L_1)[-1] \ar[rr]^{-\id} && \Xi(L_1)[-2] \\
			\, \\
			\, \\
			\Xi \ker Y \ar[uuu]^{h'} \ar[rr]^{p'} \ar[uuurr]^{H'} && \Xi(L_0)[-1] \ar[uuu]^{f'}
			}
			\qquad
			\xymatrix{
			\Xi(L_1)[-1] \ar[rrr]  &&& \Xi(L_1)[-2] \\
			& \Xi(L_1)[-1]  \\
			\, \\
			\Xi(L_0) \ar[uuu] \ar[rrr] \ar[uur] \ar[uuurrr] &&& \Xi(L_0)[-1] \ar[uuu]
			}
		\eqnd
\begin{image}\label{figure.T1-isotopy}
In (a), a depiction of the intersection points of $X \times \beta \times \gamma^2$ with $B(T_1)$. Because we cannot give a detailed description of $X \times \beta \bigcap \ker Y$, this region is simply drawn with a dashed rectangle in (a). In (b), a depiction of the intersection points of $X \times \gamma \times \gamma^2$ with $B(T_1)$. The $\mu^1$ terms of the Floer differential induces maps between modules as depicted in the commutative diagrams. 
\end{image}
\end{figure}

First let us study the cochain complex $(X,T_1)_{\gamma}$. It is depicted in Figure~\ref{figure.T1-isotopy}(b), where its differential is also drawn. Let us determine the differential maps explicitly:
\begin{lemma}\label{lemma.T1-K-differentials}
The differential of the cochain complex $(X,T_1)_{\gamma}$ is given by the following maps:
	\eqn\label{eqn.T1-K-differentials-1}
			\xymatrix{
			\Xi(L_1)(X)[-1] \ar[rrr]^{-\id}  &&& \Xi(L_1)(X)[-2] \\
			& \Xi(L_1)(X)[-1]  \\
			\, \\
			\Xi(L_0)(X) \ar[uuu]^f \ar[rrr]^{\id} \ar[uur]^{f} \ar[uuurrr]_{0} &&& \Xi(L_0)(X)[-1] \ar[uuu]^{f}
			}
	\eqnd
Here, $f:=\Xi(Y)_X$.
\end{lemma}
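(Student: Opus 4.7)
The plan is to compute $CF^*(X \times \gamma \times \gamma^2, B(T_1))$ directly by identifying its generators and then determining each matrix entry of the Floer differential using the direct-product Floer data and the directionality arguments of Section~\ref{section.directionality}.

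First I would identify the five generator groups indicated in Figure~\ref{figure.T1-isotopy}(b). Because $\gamma$ has very negative covector coordinate on its ``deep'' portion, intersections cluster in specific corners of $B(T_1)$. The shape of $\gamma \subset T^*E$---in particular, the fact that it meets the $\ker Y$ tail region of $B(T_1)$ in two landing strata rather than the single stratum that $\beta$ would produce---combined with the geometry of $B(T_1)$ as a rotated $j$ filled out by $L_0 \times$ zero section and attached to $Y \times F_1$, yields exactly the five generator groups shown. The grading shifts are forced by the conventions on $\gamma$ and the Maslov indices at each intersection, exactly as in the standard $2$-simplex computation of Section~\ref{section.1-simplex}.

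Next I would invoke the directionality principle of Section~\ref{section.directionality}. Since the Floer perturbation data may be chosen to split as direct products near intersection points, any pseudoholomorphic strip projects to an honest holomorphic strip in each of the $T^*E$ and $T^*F^2$ factors. The open mapping theorem then forces non-negative directional derivatives at the intersection points, which rules out any arrows that would go ``leftward'' or ``downward'' in the arrangement of Figure~\ref{figure.T1-isotopy}(b), and accounts for the strict upper-triangular shape of the differential.

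Then I would compute each surviving entry. The two horizontal arrows (bottom labeled $\id$, top labeled $-\id$) come from strips constant in $M \times T^*F^2$ that reduce to honest holomorphic strips in $T^*E$ connecting the two landing strata of $\gamma$; these count the canonical isomorphism identifying the ``deep'' and ``shallow'' strata, with the sign flip on top forced by the grading shift. The two vertical arrows labeled $f$ come from strips constant in the $T^*E$ factor that reduce to pseudoholomorphic strips in $M \times T^*F^2$ realizing the $1$-simplex $Y$; these recover $\Xi(Y)_X$ exactly as in Section~\ref{section.1-simplex}. The diagonal arrow to the middle generator labeled $f$ arises from strips that simultaneously realize the $T^*E$ identification and the $T^*F^2$ count of $Y$; the direct-product structure factors such a strip as a product of an $E$-strip and an $F^2$-strip, giving the combined count $\id \cdot f = f$. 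Finally, the outer diagonal labeled $0$ vanishes by a dimension count combined with directionality: any non-constant strip connecting the extreme corners would have to pass through the middle intersection and hence split off a broken configuration already counted by the length-two composites. The relation $d^2=0$ is a strong consistency check here: from $\Xi(L_0)(X)$ to $\Xi(L_1)(X)[-2]$, the composites via top-left and via bottom-right contribute $-f$ and $f$ respectively, and cancel, so the outer diagonal is indeed $0$.

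The main obstacle will be the careful identification of the middle generator as $\Xi(L_1)(X)[-1]$ (as opposed to an unexpected shift) and the verification that the diagonal-to-middle entry really is $f$ rather than some sign or composite thereof. This requires carefully tracking how $\gamma$ in the $T^*E$ direction meets the rotated $j$-portion of $B(T_1)$ together with the sign conventions on the $\gamma^2$ intersections; matching against the standard $2$-simplex formula~(\ref{eqn.2-simplex-differential}) applied to the triangle $T_1 = Y \circ j$ in the regions where $\gamma$ and $\beta$ produce the same counts then pins down the remaining entries.
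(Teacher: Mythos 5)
Your overall strategy---identify the five generator groups, use the split perturbation data and the open mapping theorem to force the directional (upper-triangular) shape, and identify most entries with $f$ or $\pm\id$ in the regions where $B(T_1)$ is collared by $B(Y)$ or $B(\id)$---matches the paper's proof. The genuine gap is your treatment of the outer diagonal entry $D\colon \Xi(L_0)(X) \to \Xi(L_1)(X)[-2]$, which is the one entry of the lemma with real content. Your primary argument (a non-constant strip joining the extreme corners ``would have to pass through the middle intersection and hence split off a broken configuration'') is unfounded: rigid strips counted by the differential need not pass near intermediate generators and do not break. Your fallback via $d^2=0$ is a non sequitur: the cancellation of the two length-two composites $(-\id)\circ f$ and $f\circ\id$ only shows $[d,D]=0$, i.e.\ that $D$ is closed, not that $D=0$. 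Indeed $D$ is a degree $-1$ map into $\Xi(L_1)(X)$, a potential self-homotopy of $f$; compare the corner entry $\Xi(Y)$ in the 2-simplex differential~(\ref{eqn.2-simplex-differential}), which is generally nonzero even though the analogous composites there are also constrained by $d^2=0$. The paper kills $D$ geometrically: along the face $q_1\ll 0$ the brane $B(T_1)$ is the product $Y\times B(F_1)$ with a zero-section factor, and a strip jumping two degrees would have to be non-constant in both factors, hence carries excess symmetry/index and cannot lie in a zero-dimensional moduli space. Some argument of this kind is needed; without it the lemma is not proved.

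A secondary, related problem: your mechanism for the short diagonal labeled $f$---strips ``simultaneously realizing the $T^*E$ identification and the $T^*F^2$ count of $Y$,'' factored as a product with combined count $\id\cdot f$---conflicts with precisely the rigidity principle just invoked: with direct-product data, a rigid strip cannot be non-constant in two factors. In the paper this entry arises the same way as the vertical ones: in the relevant region (where $\gamma\subset T^*E$ meets the tails of the $\ker Y$ part) $B(T_1)$ is collared by a copy of $B(Y)$ with the remaining directions constant, so the count reduces to the one-simplex computation of Section~\ref{section.1-simplex} and gives $f$ on the nose. Your conclusion is correct, but the stated mechanism is the same error that undermines your vanishing argument, so both points need to be redone along the collaring/rigidity lines above.
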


\begin{proof}
By the same directionality arguments as in~\cite{tanaka-pairing}, the collaring conditions guarantee that all pseudoholomorphic strips must have non-negative partial derivatives in the $E$ and $F$ components near the generators of the Floer complex. As a result, all possible arrows must point in directions of non-decreasing $E$ and $F$ coordinates---this is why the arrows are always pointing upward or rightward (or both).

Let us first explain the diagonal map, which we claim is 0. This is because the face collaring $B(T_1)$ along $q_1<<0$ (here, $q_1 \in F_1$) is isomorphic to the Lagrangian $Y \times B(F_1)$. It was proven in~\cite{tanaka-pairing} that higher branes which are products with the zero section have no pseudoholomorphic strips that jump more than 1 degree---in the present case, this is because such a strip would be non-constant in both the $B(F_1) \subset T^*F$ component, and in the $Y$ component. Such a strip has a two-dimensional automorphism group, hence does not lie in a 0-dimensional component of the moduli space of strips. 

Now, all other maps are either $f$ or $\pm \id$, as $B(T_1)$ is collared by either $B(Y)$ or by $B(\id)$ along its edges. As for signs, they are determined the same way one concludes that $\Xi(Y)$ defines a homotopy, as in~(\ref{eqn.2-simplex-differential}).
\end{proof}

Now we simplify the diagram~(\ref{eqn.T1-K-differentials-1}). We first note that the portion of the cochain complex generated by intersection points with $q_1 <<0, q_2<<0$ (here, $q_i \in F_i$)  is precisely a copy of $\ker f$. Now let us define maps
	\eqnn
		p: \ker f \to \Xi(L_0)(X),
		\qquad
		(x_0,x_1) \mapsto x_0
	\eqnd
and
	\eqnn
		h = f \circ p: \ker f \to \Xi(L_1)(X),
		\qquad
		(x_0,x_1) \mapsto f(x_0).
	\eqnd
Then the cochain complex $(X,T_1)_\gamma$ can be re-depicted as the following:
	\eqnn
		\xymatrix{
			\Xi(L_1)(X)[-1] \ar[r]^{-\id} &\Xi(L_1)(X)[-2] \\
			\ker f \ar[u]^h \ar[r]_p & \Xi(L_0)(X)[-1] \ar[u]^f
		}
	\eqnd
which encodes the triangle
	\eqnn
		\xymatrix{
			\ker f \ar[r]^p \ar[dr]_h & \Xi(L_0)(X) \ar[d]^f \\
			& \Xi(L_1)(X).
		}
	\eqnd
Importantly, this is the lower-left triangle in~(\ref{eqn.T1-prism-1}).

Now let us analyze the chain map $(X,T_1)_{\beta} \to (X,T_1)_\gamma$. Again by directionality, this can be encoded in a lower-triangular matrix:
	\eqnn
	\Phi = (\Phi_{ij}) =
	\left(
		\begin{array}{cccc}
		\Phi_{00} & 0 & 0 & 0\\
		\Phi_{10} & \Phi_{11} & 0 & 0\\
		\Phi_{20} & 0 & \Phi_{22} & 0\\
		\Phi_{30} &  \Phi_{31} & \Phi_{32} & \Phi_{33}
		\end{array}
	\right)
	\eqnd
We emphasize that some of the entries are degree-shifting---for example, the entry $\Phi_{10}$ is a map $(X,K) \to (X,L_1)[-1]$. We don't make the domain and codomain of each entry explicit as it will be clear from context later on. At this point, the fact that $\Phi$ is a chain map is enough to deduce Lemma~\ref{lemma.T1-equivalence-1}. First, we prove

\begin{lemma}\label{lemma.phi-equivalence}
Each map $\Phi_{ii}$ is a chain equivalence.
\end{lemma}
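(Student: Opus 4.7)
The plan is to identify each diagonal block $\Phi_{ii}$ with a Floer continuation map associated with the compactly supported Hamiltonian isotopy from $\beta$ to $\gamma$ in $T^*E$, and then invoke the standard fact that such continuation maps are chain equivalences. The localization is the key step: I would use the directionality arguments of Section~\ref{section.directionality}, which say that the collared direct-product structure of the Floer perturbation data near the relevant intersection points forces any pseudoholomorphic strip contributing to $\Phi_{ii}$ to have non-negative derivatives in the $F_1, F_2$ directions, via the open mapping theorem applied to projections. Hence strips counted by a diagonal entry must stay near a single corner of $B(T_1)$ in $F^2$ and cannot jump between blocks.

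In each corner, $B(T_1)$ splits as a direct product of a brane $C_i \subset M \times T^*E$ with a simple Lagrangian in $T^*F^2$, and the cobordism $Q$ splits accordingly as the product of the suspension $Q'$ of the $\beta$-to-$\gamma$ isotopy with a factor in $T^*F^2$. Block $0$ corresponds to the $(q_1 \ll 0, q_2 \ll 0)$ corner, which is collared by $C_0 = \ker Y$; blocks $1,2,3$ are collared (up to the usual $E^\vee$-stabilization) by $L_0$, $L_1$, and $L_1$ respectively. With this splitting, each $\Phi_{ii}$ reduces to the off-diagonal differential of $CF^*(Q', C_i \times \overline{\gamma})$, which is precisely the standard Floer continuation chain map from $CF^*(X \times \beta, C_i)$ to $CF^*(X \times \gamma, C_i)$ induced by the compactly supported Hamiltonian isotopy.

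For $i = 1,2,3$ the brane $C_i$ is of the form $L_k \times E^\vee$ for some $k \in \{0,1\}$, so source and target both compute $\Xi(L_k)(X)$ (up to the appropriate degree shift) and $\Phi_{ii}$ is a continuation map between them; this is a chain equivalence by the usual argument that compactly supported Hamiltonian isotopies induce quasi-isomorphisms on Floer cochain complexes. For $i = 0$, the same $1$-simplex analysis recalled in Section~\ref{section.1-simplex} identifies $CF^*(X \times \gamma, \ker Y)$ with $\ker f$, so $\Phi_{00}$ is the continuation map $\Xi(\ker Y)(X) \to \ker f$, which is a chain equivalence by the same reasoning.

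The main obstacle is making the localization rigorous, that is, ruling out any ``off-block'' strip that would contribute to a diagonal entry from a generator in a different corner. This relies on the regularity and Gromov compactness for collared Floer perturbation data established in~\cite{tanaka-pairing}, which guarantee that the direct-product structure near each intersection point persists under perturbation, so that the open mapping theorem applies to the projections to the $T^*F_i$ factors exactly as in the differential computations recalled in Sections~\ref{section.1-simplex} and~\ref{section.directionality}. Once this is in hand, the identification of each $\Phi_{ii}$ with a bona fide compactly supported continuation map is immediate, and the lemma follows.
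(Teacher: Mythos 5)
Your overall architecture---localize each diagonal block to a collared corner, identify it with a continuation map, and invoke the standard invertibility of continuation maps---has a genuine gap at the identification step, and it is exactly the step this paper is structured to avoid. The matrix $\Phi$ is defined by pairing with the Lagrangian \emph{suspension} of the $\beta$-to-$\gamma$ isotopy (this is the lower-triangular structure coming from~(\ref{eqn.suspension-complex})), and Remark~\ref{remark.directionality} is explicit that this is \emph{not} the usual continuation-map construction: the comparison between suspension-pairing maps and classical continuation maps is only ``proposed'' in the cited work of Oh and is not established in the present setting (collared direct-product perturbation data, wrapped test objects, non-compact branes such as $\ker Y$). So your assertion that each $\Phi_{ii}$ ``is precisely the standard Floer continuation chain map,'' from which you then import the standard quasi-isomorphism statement, is itself the unproven comparison; without it, the appeal to ``the usual argument'' does not apply to $\Phi_{ii}$. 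Note also that your localization buys less than you need for block $0$: that corner is not a small product region but carries the whole brane $\ker Y$, so even after splitting off the $T^*F^2$ factors you have not reduced to a classical compactly supported continuation statement---you have reduced to the very same kind of suspension-pairing map you started with, in one fewer $T^*F$ factor.

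The paper's proof needs neither the corner-by-corner localization nor the comparison with continuation maps. It stays inside the suspension formalism: pair against the suspension $\overline{Q}$ of the \emph{inverse} Hamiltonian isotopy to obtain a second map $\Psi$, lower-triangular by the same directionality, and then use the higher cobordism of Section~\ref{section.inverses} (Figure~\ref{figure.inverse-cobordism}) exhibiting the composite of the two suspensions as homotopic to the identity cobordism; pairing with that higher cobordism yields a lower-triangular homotopy $H$ with $dH + Hd = \Psi\Phi - \id$. Reading off the diagonal entries gives $[d,H_{ii}] = \Psi_{ii}\Phi_{ii} - \id$, and the symmetric argument for the other composite shows each $\Phi_{ii}$ is a chain equivalence, with no need to identify what any $\Phi_{ii}$ actually is. If you want to salvage your outline, replace the appeal to classical continuation theory with this reverse-suspension argument---at which point the localization step becomes unnecessary, since the triangular algebra already isolates the diagonal blocks.
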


\begin{proof}
Consider the suspension $\overline{Q}$ of the inverse Hamiltonian isotopy. By pairing $T_1 \times \gamma$ against $\overline{Q}$, we obtain a map $\Psi$. It is a 4x4 lower-triangular matrix by directionality. As we recalled in Section~\ref{section.inverses}, there is a higher cobordism exhibiting a homotopy between the composite of the two isotopies, and the identity cobordism. This results in another 4x4 lower-triangular matrix $H$ satisfying the relation
	\eqnn
	dH + Hd = \Psi \Phi - \id.
	\eqnd
The diagonal entries of this matrix show that each $\Phi_{ii}$ is a chain equivalence, as the diagonal entires $H_{ii}$ realize the relation $dH_{ii} - H_{ii} d = \Psi_{ii}\Phi_{ii} - \id$.
\end{proof}

\begin{proof}[Proof of Lemma~\ref{lemma.T1-equivalence-1}.]
We denote the differential of $CF^*(X \times \beta \times \gamma^2, T_2)$ as labeled in Figure~\ref{figure.T1-isotopy}. Then the relation $d \Phi = \Phi d$ results in the following equations:
\begin{align}
(ii)& :  d\Phi_{ii} = \Phi_{ii} d \\
(10)& :  d\Phi_{10} + \Phi_{10}d = h \Phi_{00} - \Phi_{11}h' \\
(20)& :  d\Phi_{20} + \Phi_{20}d = p\Phi_{00} - \Phi_{22}p' \\
(30)& :  d \Phi_{30} - \Phi_{30} d = \Phi_{31}h' + \Phi_{32}p' + \Phi_{33}H' + \Phi_{10} - f \Phi_{20} \\
(13)& :  d\Phi_{31} + \Phi_{31} d = -\Phi_{33} + \Phi_{11} \\ 
(23)& :  d\Phi_{32} + \Phi_{32} d = \Phi_{33}f' - f \Phi_{22}
\end{align}
Here, the $(ij)$th relation results from examining the $i$th row and $j$th column of the matrix $d\Phi - \Phi d$. Now we construct the prism. The edges are as follows:
	\eqn\label{eqn.T1-equivalence-1-prism}
		\xymatrix{
		&&& K(X) \ar[ddlll]^{\sim}_{\Phi_{00}} \ar[r]^{p'} \ar[dr]_{h'} & \Xi(L_0)(X) \ar[d]^{f'} \ar[ddlll]^{\sim}_{\Phi_{22}} \\
		&&& & \Xi(L_1)(X) \ar[ddlll]^{\sim}_{\Phi_{33}}\\
		\ker \Xi(Y)(X) \ar[r]^p \ar[dr]_h & \Xi(L_0)(X) \ar[d]^{f} \\
		& \Xi(L_1)(X) &&&.
		}
	\eqnd
From the above relations, we deduce that 
	\begin{itemize}
		\item $\Phi_{20}$ is a homotopy filling in the top rectangle, as it realizes the relation $p \Phi_{00} \sim \Phi_{22} p'$.
		\item $\Phi_{32}$ is as homotopy filling the rightmost face, as it realizes the relation $f \Phi_{22} \sim \Phi_{33} f'$.
		\item The bottom face of the prism is filled by $\Phi_{31}h' + \Phi_{10}$, which realizes the relations $h\Phi_{00} \sim \Phi_{11}h' \sim \Phi_{33}h'$.
		\item The triangular face containing $K(X)$ is filled by $H'$.
		\item Finally, the interior of the prism is filled by $\Phi_{30}$---this is the content of the $(30)$th equation.
	\end{itemize}
Lemma~\ref{lemma.phi-equivalence} shows that each $\Phi_{ii}$ is an equivalence, so we are finished.
\end{proof}

\clearpage
\section{$T_2$, geometrically: The equivalence (\ref{eqn.T2-equivalence})}\label{section.T2}
The main goal of this section is to construct the equivalence claimed in~(\ref{eqn.T2-equivalence}). After constructing a cochain complex $I_X$ (the value of a putative module $I$ on a test object $X$), we articulate this equivalence in Lemma~\ref{lemma.T2-equivalence} below. For notational simplicity, we assume that $Y$ is an unstabilized cobordism, so it is a submanifold of $M \times T^*F$. For the stabilized version, one can replace every instance of $X$ in what follows with $X \times (E^\vee)^n$. 

Consider Figures~\ref{figure.T2-continuation-1} and~\ref{figure.T2-continuation-2}. In the first of these figures is depicted the generators of the cochain complex
	\eqnn
		CF^*(X \times \beta \times \gamma^2 , B(T_2)).
	\eqnd
We draw this Floer complex as follows:
	\eqnn
		\xymatrix{
			(X,L_1)[-1] \ar[rrr]^{-\id} & &&(X,L_1)[-2] \\ 
			\, \\
			\, \\
			(X \times \beta, \ker(Y)) \ar[uuu]^{h} \ar[uuurrr]_{H}.
		}
	\eqnd
Since the total differential squares to zero, we determine that $h$ is a chain map from $(X \times \beta, \ker Y)$ to $(X,L_1)$, and that $H$ is a homotopy from $h$ to the zero map.

\begin{figure}
		\[
			\xy
			\xyimport(8,8)(0,0){\includegraphics[width=4.5in]{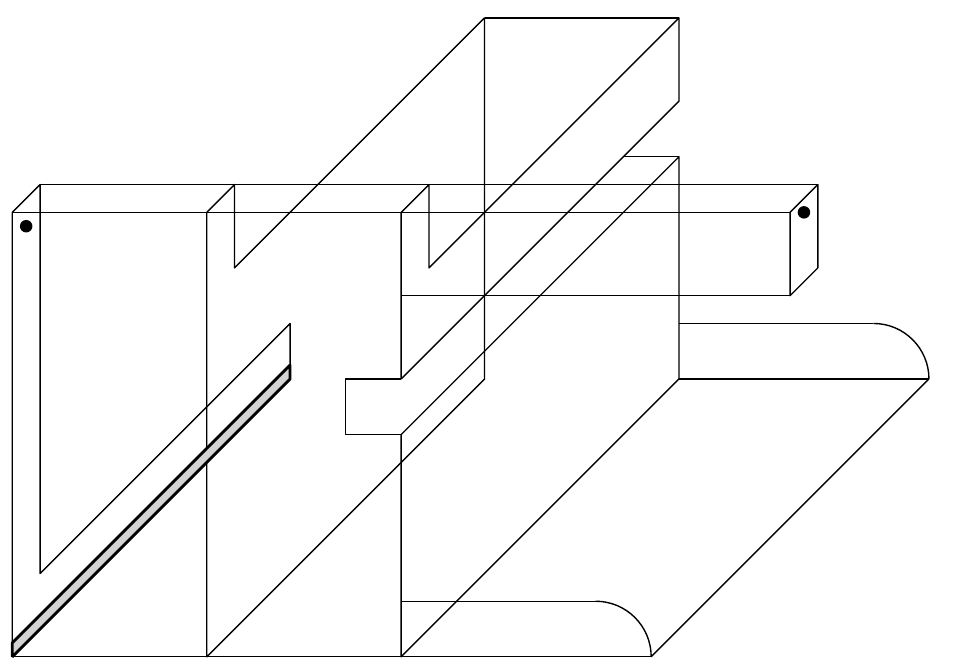}}
			,(-0.7,0.4)*+{(X,\ker Y)}
			,(-0.8,5.5)*+{(X,L_1)[-1]}
			,(7.7,5.5)*+{(X,L_1)[-2]}
			\endxy
		\]
\begin{image}\label{figure.T2-continuation-1}
An image of the intersections of $X \times \beta \times \gamma^2$ with $B(T_2)$, drawn as an image in $\RR^3 \cong E \times F^2$. Indicated in grey is the region where $X \times E$ intersects $K(Y)$. Note that this image is rotated from the one in Figure~\ref{figure.T2}.  To be explicit: Running away from the reader is $E$; the $F_1$ coordinate runs to the right, and the $F_2$ coordinate runs upward.
\end{image}
\end{figure}

\begin{figure}
		\[
			\xy
			\xyimport(8,8)(0,0){\includegraphics[width=4.5in]{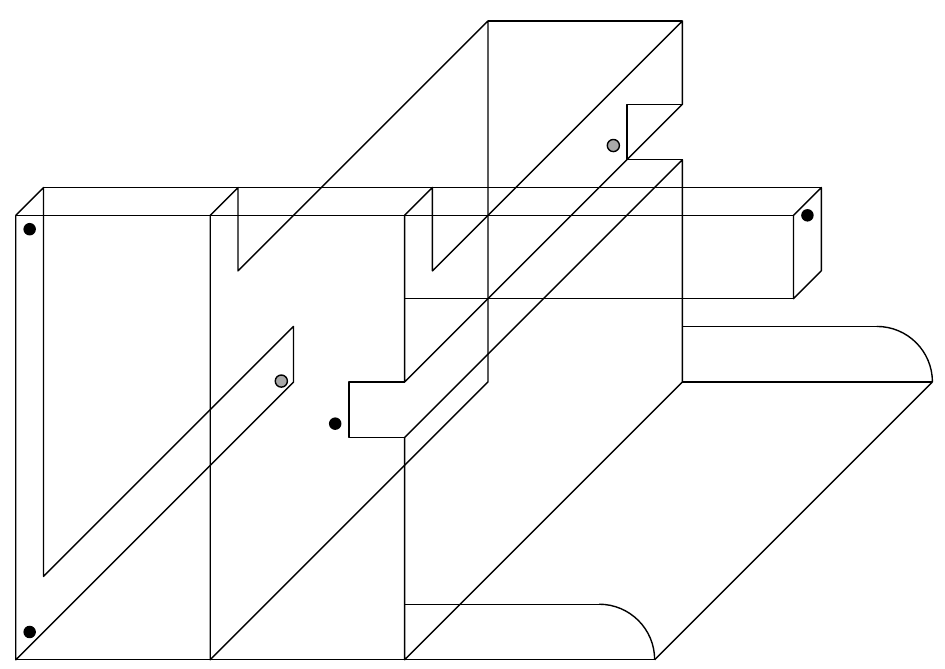}}
			,(-0.7,0.4)*+{(X,L_0)}
			,(-0.8,5.5)*+{(X,L_1)[-1]}
			,(7.7,5.5)*+{(X,L_1)[-2]}
			,(2, 3.7)*+{(X,L_1)[-1]}
			,(3.7,2.5)*+{(X,L_1)[-1]}
			,(6.5,6.5)*+{(X,L_1)[-2])}
			\endxy
		\]
\begin{image}\label{figure.T2-continuation-2}
An image of the intersections of $X \times \gamma \times \gamma^2$ with $B(T_2)$. Indicated in grey is the region where $X \times E$ intersects $K(Y)$. The intersection points live in the regions where the ``vector fields'' (i.e., the components of $(F^\vee)^3$) are arbitrary negative in all three $F^\vee$ components. The grey dots are intersection points occurring where the $E$ coordinate is large.
\end{image}
\end{figure}
	
\begin{lemma}\label{lemma.T2-gamma}
The differentials of the Floer complex 
	\eqnn
		CF^*(X \times \gamma \times \gamma^2 , B(T_2)).
	\eqnd
are as follows:
	\eqnn
		\xymatrix{
			(X,L_1)[-1] \ar[rrrrr]^{-\id} &&&&&(X,L_1)[-2] \\ 
			 & & & (X,L_1)[-2] \\
			 & (X, L_1)[-1] \ar[urr]) & (X,L_1)[-1] \ar[ur]_{\id} \ar@/_2pc/[uurrr]_{G_2}\\
			(X, L_0) \ar[uuu]^h \ar[urr] \ar[ur]^f\ar@/^2pc/[uuurrrrr]^{G_0} \ar[uurrr]
		}
	\eqnd
\end{lemma}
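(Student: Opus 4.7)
The plan is to adapt the approach of Lemma~\ref{lemma.T1-K-differentials} to the brane $B(T_2)$, whose three collared faces correspond to the edges $\ker Y \to 0$, $0 \to L_1$, and the diagonal $h : \ker Y \to L_1$ of the 2-simplex $T_2$. First I would enumerate the generators of $CF^*(X \times \gamma \times \gamma^2, B(T_2))$ by locating the intersections of $X \times \gamma \times \gamma^2$ with $B(T_2)$ depicted in Figure~\ref{figure.T2-continuation-2}. Because each $\gamma$ is chosen with arbitrarily negative cotangent coordinate, the intersection points lie only in those corners of $B(T_2)$ where the vector field blows up negatively in all three of $E^\vee, F_1^\vee, F_2^\vee$; one reads off the generator $(X,L_0)$ from the region where $B(T_2)$ is collared by $\ker Y$ (and $X \cap \ker Y$ reduces to $X \cap L_0$), and the various $(X,L_1)[-1]$ and $(X,L_1)[-2]$ generators from corners of the remaining collared faces.

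Next I would invoke the directionality arguments of~\cite{tanaka-pairing}. Near each intersection, the Floer perturbation data split as direct products on the factors $M$, $T^*E$, $T^*F_1$, $T^*F_2$, so projection to any cotangent factor is holomorphic. The open mapping theorem then forces every pseudoholomorphic strip to have non-negative boundary derivatives in each of the $E, F_1, F_2$ components, so all differentials must travel in the direction of weakly increasing $(E, F_1, F_2)$. This gives exactly the pattern of arrows drawn in the lemma. To identify the value of each short arrow I would read off the collaring piece governing the region between its endpoints: along a face collared by $B(h)$ the arrow is $h$; along a face inherited (after stabilization) from $Y$ it is $f = \Xi(Y)_X$; and along a face collared by the identity cobordism it is $\pm \id$. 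The signs are forced by the relation $d^2 = 0$, exactly as in~(\ref{eqn.2-simplex-differential}) and in the proof of Lemma~\ref{lemma.T1-K-differentials}.

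The principal obstacle, and the point where the analysis departs from the $T_1$ case, is the presence of the long arrows $G_0$ and $G_2$. In Lemma~\ref{lemma.T1-K-differentials} the analogous long diagonal vanished because $B(T_1)$ was collared by a product $Y \times B(F_1)$ with the zero section, and degree-jumping strips in such a product carry a two-dimensional automorphism group and so cannot appear in the zero-dimensional component of the moduli space. No such product structure exists along the $B(h)$-collared face of $B(T_2)$, and consequently $G_0$ and $G_2$ need not vanish: they are genuine higher operations recording the coherences between $h$, $f$ and the identity that are needed to make the total differential square to zero. The remaining work is therefore not to compute $G_0$ and $G_2$ in closed form, but to rule out any other higher-degree arrows. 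This follows by running the directionality argument one more time: any putative extra arrow either violates the weakly-increasing-coordinate constraint, or factors through a collared face that has already been computed, in which case it is absorbed into a short arrow already identified. Combining these three steps yields the differential as stated.
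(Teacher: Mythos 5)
Your proposal matches the paper's proof in essentially every step: read off the generators from the corners where $\gamma$ is deep, use the direct-product perturbation data and boundary-stripping/directionality argument to constrain the arrows, identify the labeled arrows from the regions where $B(T_2)$ is collared by the identity, by $Y$, and by $h:\ker Y \to L_1$, and leave $G_0$, $G_2$ as unidentified strip counts (the paper likewise makes no comment on them). Your added observation that the $T_1$-style vanishing argument fails here for lack of a product-with-zero-section collar is a correct elaboration, not a departure.
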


\begin{proof}
In Figure~\ref{figure.T2-continuation-2} is depicted the generators of the Floer complex. The content of the present Lemma is to show that differentials only propagate in a particular direction, and that we can identify the labeled arrows as $\id$, $f$, and $h$.

The directionality of the differential (for instance, there are no differentials from $(X,L_1)[-2]$ to any other summand) is the usual directionality argument---near each intersection point, we choose a direct sum almost-complex structure and a boundary-stripping argument so that the directional derivative in the $E$ or $F$ directions must be non-negative.

Now we show how to identify the differentials. We make no comment on $G_0$ and $G_2$, as these are simply arbitrary maps given by counting strips. The arrows labeled $\id$, $f$, and $h$ are identified with the labels because, in those regions, the cobordism $T_2$ is collared by the identity morphism, by $Y$, and by the map $\ker Y \to L_1$, respectively. This finishes the proof.
\end{proof}

For brevity, let us treat the bottom half of this diagram as a single cochain complex, $I_X$. Concretely, $I_X$  is the cochain complex that we draw as
	\eqn\label{eqn.I_X}
		\xymatrix{
			&&& (X,L_1)[-2] \\
			&(X,L_1)[-1] \ar[urr] & (X,L_1)[-1] \ar[ur] \\
			(X,L_0) \ar[ur] \ar[urr] \ar[rrruu] &&& .
		}
	\eqnd
Then the Floer complex 
	$
	CF^*(X \times \gamma \times \gamma^2 , B(T_2))
	$
may be summarized as
	\eqnn
		\xymatrix{
			(X,L_1)[-1] \ar[rrr]^{-\id} & &&(X,L_1)[-2] \\ 
			\, \\
			\, \\
			I_X \ar[uuu]^{g} \ar[uuurrr]_{G = G_0 \oplus G_2}
		}
	\eqnd
Since the total differential squares to zero, we determine that $g$ is a chain map from $I_X$ to $(X,L_1)$, and that $G$ is a homotopy from $g$ to the zero map.

Now, one can apply the same Hamiltonian isotopy as before to turn $\beta$ into $\gamma \subset T^*E$. As usual, pairing against the suspension of this isotopy, one finds an associated map
	\eqnn
		\phi:	CF^*(X \times \beta \times \gamma^2, B(T_2)) \to CF^*(X \times \gamma \times \gamma^2, B(T_2)).
	\eqnd
$\phi$ is represented by a lower-triangular matrix
	\eqnn
        \left(
        \begin{array}{ccc}
         \phi_{00} &  0  & 0  \\
         \phi_{10} & \phi_{11}   & 0  \\
         \phi_{20} & \phi_{21}  & \phi_{22}  
        \end{array}
        \right)
	\eqnd
where $(a_0,a_1,a_2) \in K(X) \oplus (X,L_1)[-1] \oplus (X,L_1)[-2]$ is sent to
	\eqnn
		( \sum \phi_{0i} a_i, \sum \phi_{1i} a_i, \sum \phi_{2i} a_i) \in I_X \oplus (X,L_1)[-1] \oplus (X,L_1)[-2].
	\eqnd

Exploiting the inverse Hamiltonian isotopy (as in Lemma~\ref{lemma.phi-equivalence}), one arrives at:

\begin{lemma}
Each diagonal entry $\phi_{ii}$ is a homotopy equivalence of cochain complexes.
\end{lemma}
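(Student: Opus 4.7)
The strategy is to mimic Lemma~\ref{lemma.phi-equivalence} exactly, using the inverse Hamiltonian suspension together with the higher cobordism relating the composite of the two isotopies to the identity. The only thing one needs to check is that the directionality arguments of Section~\ref{section.directionality} still apply in the presence of the triangle $B(T_2)$ rather than $B(T_1)$, so that the relevant maps remain lower-triangular matrices of the correct shape.

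First I would construct, exactly as before, the suspension cobordism $\overline{Q}'$ of the inverse Hamiltonian isotopy from $\gamma|_I$ back to $\beta|_I$, and take the product with $X \times \gamma^2$ to obtain $\overline{Q} \subset M \times T^*E \times T^*F^2 \times T^*F$, a cobordism from $X \times \gamma \times \gamma^2$ back to $X \times \beta \times \gamma^2$. Pairing $\overline{Q}$ with $B(T_2) \times \overline{\gamma}$ yields a chain map
\begin{equation}\nonumber
\psi \colon CF^*(X \times \gamma \times \gamma^2, B(T_2)) \to CF^*(X \times \beta \times \gamma^2, B(T_2)).
\end{equation}
By the same boundary-stripping/open-mapping arguments used to obtain the lower-triangular form of $\phi$ (and as in Lemma~\ref{lemma.T2-gamma}), $\psi$ is also represented by a $3\times 3$ lower-triangular matrix with respect to the decompositions $I_X \oplus (X,L_1)[-1] \oplus (X,L_1)[-2]$ and $K(X) \oplus (X,L_1)[-1] \oplus (X,L_1)[-2]$.

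Next, as recalled in Section~\ref{section.inverses} (see Figure~\ref{figure.inverse-cobordism}), the composite of the two Hamiltonian isotopies is exhibited as homotopic to the identity isotopy via a 2-simplex in $\lag$. Applying the pairing construction to this higher cobordism against $B(T_2)$, one obtains a lower-triangular $3 \times 3$ matrix $H$ of degree $-1$ satisfying
\begin{equation}\nonumber
dH + Hd = \psi\phi - \id.
\end{equation}
Looking at the diagonal, the $(i,i)$-entry yields $dH_{ii} - H_{ii}d = \psi_{ii}\phi_{ii} - \id$, so $\psi_{ii}\phi_{ii}$ is chain-homotopic to the identity on the $i$th summand. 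Running the same argument with the roles of $Q$ and $\overline{Q}$ exchanged gives that $\phi_{ii}\psi_{ii}$ is also chain-homotopic to the identity. Hence each $\phi_{ii}$ is a chain equivalence with homotopy inverse $\psi_{ii}$.

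The only real subtlety is to confirm that all three matrices $\phi$, $\psi$, and $H$ are lower-triangular with respect to the chosen decompositions; without this, one cannot conclude anything from the diagonal of the relation $dH+Hd=\psi\phi-\id$. But this is precisely the directionality property of Section~\ref{section.directionality}: pseudoholomorphic strips with boundary on branes that split as direct products near intersection points must have non-negative directional derivatives in the relevant $E$ and $F$ components, which forces all non-trivial matrix entries to point from summands indexed by smaller $q$-coordinates to summands indexed by larger $q$-coordinates. Since exactly the same collaring and perturbation data are used for $B(T_2)$ as for $B(T_1)$, and since the decomposition of the Floer complex into $I_X$ plus two copies of shifted $(X,L_1)$ was already obtained in Lemma~\ref{lemma.T2-gamma} by this argument, no new geometric input is required; the proof reduces to citing Lemma~\ref{lemma.phi-equivalence} applied verbatim to $B(T_2)$.
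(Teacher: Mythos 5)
Your argument is exactly the paper's: the statement is proved by citing the inverse-suspension construction of Lemma~\ref{lemma.phi-equivalence} verbatim, with the same lower-triangular matrices $\psi$ and $H$ satisfying $dH+Hd=\psi\phi-\id$ and the conclusion read off the diagonal. Your additional step of exchanging the roles of the two suspensions to get a two-sided homotopy inverse is a small extra precaution the paper leaves implicit, but the route is the same.
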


Recall the computations that led us to the prism~(\ref{eqn.T1-equivalence-1-prism}) in the proof of Lemma~\ref{lemma.T1-equivalence-1}. Just as we did there, writing out the equation $d \phi = \phi d$ results in the following:

\begin{lemma}\label{lemma.T2-equivalence}
There is an equivalence of diagrams $\Delta^2 \times \Delta^1 \to \chain$ as follows:
	\eqn\label{eqn.T2-equivalence-prism}
		\xymatrix{
		&&& K(X) \ar[dr]^{h'} \ar[d] \ar[ddlll]_{\phi_{00}}^\sim\\
		&&&0 \ar[r] \ar[ddlll]& \Xi(L_1)(X) \ar[ddlll]_{\phi_{22}}^\sim\\
		I_X \ar[dr]^g  \ar[d]              \\
		0 \ar[r]    & \Xi(L_1)(X)  
		}
	\eqnd
The interior is filled by the element $\phi_{20} \in \hom^{-2}(K(X), \Xi(L_1)(X))$, and the five faces of the prism are given by 
	\eqnn
		0, \qquad
		0, \qquad
		G, \qquad
		H', \qquad
		\phi_{21}h' + \phi_{10}.
	\eqnd
\end{lemma}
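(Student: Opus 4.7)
The proof is a direct analog of Lemma~\ref{lemma.T1-equivalence-1}, with the $4\times 4$ matrix of that proof replaced by a $3\times 3$ matrix: the ``$L_0$ column'' disappears because the $T_2$ triangle has a zero object in the position where $T_1$ had $L_0$. The plan is to pair $B(T_2)$ against the suspension cobordism of the same Hamiltonian isotopy used in Section~3 and read the prism~(\ref{eqn.T2-equivalence-prism}) off the matrix of the resulting continuation chain map.

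First, I would form the continuation map $\phi$ by counting strips from $X\times\beta\times\gamma^2$ to $X\times\gamma\times\gamma^2$ with boundary on $B(T_2)$, obtained as usual from the Floer complex $CF^*(Q, B(T_2)\times\overline{\gamma})$ where $Q$ is the suspension of the isotopy from $\beta$ to $\gamma$. By the directionality arguments of Section~\ref{section.directionality} and the generator pictures already recorded in Figures~\ref{figure.T2-continuation-1} and~\ref{figure.T2-continuation-2}, $\phi$ is lower-triangular:
\[
\phi=\begin{pmatrix}\phi_{00}&0&0\\ \phi_{10}&\phi_{11}&0\\ \phi_{20}&\phi_{21}&\phi_{22}\end{pmatrix}\colon K(X)\oplus (X,L_1)[-1]\oplus (X,L_1)[-2]\longrightarrow I_X\oplus (X,L_1)[-1]\oplus (X,L_1)[-2].
\]
Pairing with the inverse isotopy and the resulting higher cobordism, exactly as in Lemma~\ref{lemma.phi-equivalence}, forces each diagonal entry $\phi_{ii}$ to be a chain equivalence.

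Next, I would expand the relation $d\phi-\phi d=0$ entry by entry. The $(i,i)$ equations give that each $\phi_{ii}$ is a chain map; the $(1,0)$ equation reads $[d,\phi_{10}]=g\phi_{00}-\phi_{11}h$, and the $(2,1)$ equation provides a homotopy between the two parallel $-\id$ edges. Combining these, the nontrivial quadrilateral face of the prism (the one joining the $h'$ edge on top with the $g$ edge on the bottom) is filled by $\phi_{21}h'+\phi_{10}$. The two quadrilateral faces passing through the zero object are filled by the zero homotopy, the top triangular face is filled by $H'$, and the bottom triangular face is filled by $G$; the latter two identifications use precisely the off-diagonal differential blocks identified in Lemma~\ref{lemma.T2-gamma} (and in the analogous description of the source complex preceding it). Finally, the $(2,0)$ entry of $d\phi=\phi d$ reads, up to signs, $[d,\phi_{20}]=\phi_{10}+\phi_{21}h+\phi_{22}H-G\phi_{00}$, which is exactly the equation stating that $\phi_{20}$ trivializes the boundary 2-cycle built from the five faces just assembled; hence $\phi_{20}$ fills the interior 3-cell of the prism.

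The main obstacle is the sign and degree bookkeeping in the $(2,0)$ equation: one must verify that, with the orientation conventions implicit in the diagrams of the paper, the algebraic boundary of the five claimed 2-faces in the Dold-Kan simplicial set of $\hom^\bullet(K(X),\Xi(L_1)(X))$ coincides on the nose with the expression produced by the $(2,0)$ matrix entry, so that $\phi_{20}$ genuinely witnesses a 3-simplex rather than merely a homotopy up to sign. All geometric input beyond what was already established for $T_1$ is contained in the identification of $G$ (and the constancy of the intermediate $-\id$ block) from Lemma~\ref{lemma.T2-gamma}; once this is in hand, the construction of the prism is a purely algebraic consequence of $\phi$ being a chain map, and the equivalence of diagrams follows from $\phi_{00}$ and $\phi_{22}$ being equivalences.
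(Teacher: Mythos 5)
Your proposal matches the paper's own argument: the paper constructs exactly this $3\times 3$ lower-triangular continuation map $\phi$ from the suspension of the $\beta\to\gamma$ isotopy, shows the diagonal entries are equivalences via the inverse isotopy as in Lemma~\ref{lemma.phi-equivalence}, and then obtains Lemma~\ref{lemma.T2-equivalence} precisely by writing out $d\phi = \phi d$ entry by entry, with the $(1,0)$, $(2,1)$, and $(2,0)$ relations supplying the quadrilateral face $\phi_{21}h'+\phi_{10}$ and the interior filler $\phi_{20}$, and the geometric identifications of $g$, $G$, $h'$, $H'$ coming from Lemma~\ref{lemma.T2-gamma} and the source-complex description. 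This is essentially the same proof, so no further comparison is needed.
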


\clearpage
\section{$T_1$, algebraically: The equivalence (\ref{eqn.T1-equivalence-2})}
The goal of this section is to exhibit the equivalence~(\ref{eqn.T1-equivalence-2}), which we encode in Lemma~\ref{lemma.T1-equivalence-2}. As before, we fix a test object $X$ throughout this section.

Before we proceed, let us label the maps involved in defining the complex $I$. We ``flatten out'' the defining arrows in~(\ref{eqn.I_X}) to make the arrows more visible:
	\eqnn
		\xymatrix{
			(X,L_1)[-1] \ar[r]^{-q} & (X,L_1)[-2] \\
			(X,L_0) \ar[r]^r \ar[u]^f \ar[ur]^{T} & (X,L_1)[-1]	\ar[u]^{\id}
		}
	\eqnd
The arrows $T, q, r$ are undetermined maps, and all we can glean from $I$ so far is that $T$ defines a homotopy from $qf$ to $r$:
	\eqn\label{eqn.dT}
		[d,T] = qf - r.
	\eqnd
For explicitness, we record that the differential of the complex $I_X$ is equal to
	\eqnn
		d(x_0, x_1, x_2, x_3)
		=
		(d x_0, fx_0 - dx_1, r x_0 - dx_2, Tx_0 - q x_1 + x_2 + d x_3).
	\eqnd

\subsection{An (eventual) universal property for $I$}
First we exhibit a property that will turn out to be the universal property of $I$ as a kernel object.

Assume that one has a diagram of cochain complexes $\Delta^1 \times \Delta^1 \to \chain$ as follows:
	\eqnn
		\xymatrix{
			A \ar[r]^k \ar[d] \ar[dr]^Q_R & (X,L_0) \ar[d]^f \\
			0 \ar[r] & (X,L_1).
		}
	\eqnd
where the triangles are labeled $Q$ and $R$. Then one has an induced map to $I_X$,
	\eqnn
		\eta: A \to I,
		\qquad
		x \mapsto (k x, Qx- Rx, -Tk x + qQx - qRx, 0).
	\eqnd

\begin{lemma}
$\eta$ is a chain map.
\end{lemma}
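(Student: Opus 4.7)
The plan is a direct verification: I would compute $d(\eta(x))$ using the explicit formula for the differential of $I_X$ displayed just above the statement, compare with $\eta(dx)$, and check agreement coordinate by coordinate.

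First I would enumerate the relations in play. The edge $k\colon A\to (X,L_0)$ is a chain map, and $f$, $q$, $r$ are chain maps by construction. Writing $\Delta\colon A\to (X,L_1)$ for the diagonal arrow of the given square, the two 2-simplex fillings supply homotopies
\[
[d,Q] = fk - \Delta, \qquad [d,R] = 0 - \Delta,
\]
following the sign convention already used for the element $R$ of~(\ref{eqn.R}); in particular $[d,Q-R] = fk$, so $Q - R$ is a nullhomotopy of $fk$. Finally, $T$ satisfies $[d,T] = qf - r$ by~(\ref{eqn.dT}).

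The check then splits into four coordinate computations. The $0$th coordinate reduces to $[d,k]=0$. The $3$rd coordinate vanishes on the nose: substituting $x_0 = kx$, $x_1 = Qx-Rx$, $x_2 = -Tkx + qQx - qRx$, $x_3 = 0$ into $Tx_0 - qx_1 + x_2 + dx_3$ produces a telescoping cancellation. The first coordinate reduces exactly to $[d,Q-R]=fk$ applied to $x$. The only substantive check is the second coordinate, where, using $dq=qd$ to push $d$ past $q$, the equation $d\eta = \eta d$ rearranges to the identity
\[
rk + [d,T]\,k \;-\; q\,[d,Q] \;+\; q\,[d,R] \;=\; 0.
\]
Substituting the four relations above, the left-hand side becomes $rk + (qf-r)k - q(fk - \Delta) + q(-\Delta) = 0$, a formal cancellation.

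I expect no obstacle beyond careful sign bookkeeping under the graded commutator convention $[d,\varphi] = d\varphi - (-1)^{|\varphi|}\varphi d$; since $Q$, $R$, and $T$ all have degree $-1$, each commutator reads $d\varphi + \varphi d$. The formula defining $\eta$ is essentially forced by this computation: the first component is the given $k$, the second uses the canonical nullhomotopy $Q - R$ of $fk$, and the third is engineered so that the $q$-image of $Q - R$ combines with $-Tk$ to absorb the discrepancy between $qfk$ and $rk$ produced in the second coordinate of the differential.
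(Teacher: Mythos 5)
Your proposal is correct and takes essentially the same route as the paper: both verify $d\eta=\eta d$ coordinate by coordinate, using the on-the-nose cancellation in the last coordinate, the identity $[d,Q-R]=fk$ in the degree-one coordinate, and $[d,T]=qf-r$ together with $qd=dq$ in the remaining one. The only cosmetic difference is that you name the diagonal arrow $\Delta$ and record the two triangle homotopies separately, which the paper leaves implicit in the single relation $[d,Q-R]=fk$.
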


\begin{proof}
We compute $d \eta x$:
	\begin{align}
		d\eta x 
		&= d(k x, Qx-Rx, -Tk x + q Q x - q R x, 0) \nonumber\\
		&= (dk x, fk x - d(Qx - Rx), rk x - d(-Tk x + qQx - qRx), \nonumber\\
		 &\qquad Tk x - q(Qx-Rx) - Tk x + qQx - qRx) \nonumber \\
		&= (dk x, fk x - d(Qx - Rx), rk x - d(-Tk x + qQx - qRx), 0 ) \nonumber
	\end{align}
while
	\begin{align}
		\eta d x
		&= (k dx, Qdx - Rdx, -Tk dx + qQdx - qRxd, 0) \nonumber
	\end{align}
Analyzing $d\eta x - \eta d x$ component by component, we see
	\eqnn
		(d\eta x - \eta d x)_0 = d k x - k d x = 0
	\eqnd
since $k$ was assumed a chain map. By the definition of $Q$ and $R$, we have:
	\begin{align}
		(d\eta x - \eta d x)_1 
		&= fkx - [d,Q-R]x \nonumber \\
		&= fkx - (fk - 0) x = 0. \nonumber
	\end{align}
Using~(\ref{eqn.dT}) we find
	\begin{align}
		(d\eta x - \eta d x)_2
		&= rkx + [d,T]kx -q[d,Q-R]x\nonumber \\
		&= rkx + (qf-r)kx - q(fk-0)x \nonumber \\
		&= 0.\nonumber
	\end{align}
This completes the proof.
\end{proof}

\subsection{$\eta$ applied to $\ker f = \ker(\Xi(Y)(X))$}
Now we apply the construction of $\eta$ to $\ker f$. In this case, we know that $Q=0$, $\phi = p$, and $R(x_0,x_1) = -x_1$. (See the discussion near~(\ref{eqn.chain-kernel}).) Hence 
	\eqnn
		\eta(x_0,x_1)
		=
		(x_0, x_1, -Tx_0 +qx_1, 0).
	\eqnd

\begin{lemma}\label{lemma.eta-equivalence}
The map $\eta: \ker f \to I$ is an equivalence.
\end{lemma}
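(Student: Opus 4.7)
The plan is to make $\eta$ into an explicit chain homotopy equivalence by producing a one-sided inverse and an explicit contraction witnessing the retraction property. The key structural observation is that the third and fourth summands of $I_X$ together form a contractible subcomplex: inspecting the differential formula, the $+x_2$ term in the fourth component is the identity map $(X,L_1)[-1] \to (X,L_1)[-1]$, and together with the $-dx_2$ and $+dx_3$ terms it makes $\{(0,0,x_2,x_3)\} \subset I_X$ the mapping cone of an identity, which is automatically acyclic. Hence $I_X$ ought to be quasi-isomorphic to the quotient by this cone, and that quotient is canonically identified with $\ker f$ by forgetting $x_2$ and $x_3$.

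First I would define the projection $\pi : I_X \to \ker f$ by the formula $\pi(x_0,x_1,x_2,x_3) := (x_0,x_1)$. From the explicit formula for the differential on $I_X$, the first two components of $d(x_0,x_1,x_2,x_3)$ are exactly the differential of $\ker f$ recalled around~(\ref{eqn.chain-kernel}), so $\pi$ is a chain map; and $\pi \circ \eta = \id_{\ker f}$ is immediate from the defining formula for $\eta$. Thus $\eta$ is a section of $\pi$, and all that remains is to exhibit a chain homotopy $\eta\pi \simeq \id_{I_X}$.

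Next I would write down the explicit contraction $H : I_X \to I_X$ of degree $-1$ defined by $H(x_0,x_1,x_2,x_3) := (0,0,x_3,0)$. The degree matches because an element $x_3$ in $(X,L_1)[-2]$ living in total degree $k$ has native degree $k-2$, which is exactly the native degree required in the $(X,L_1)[-1]$ slot at total degree $k-1$. A direct calculation using the differential formula on $I_X$ together with $\eta(x_0,x_1) = (x_0,x_1,-Tx_0+qx_1,0)$ then gives
$$[d,H](x_0,x_1,x_2,x_3) \;=\; (0,\,0,\, x_2 + Tx_0 - qx_1,\, x_3) \;=\; (\id_{I_X} - \eta\pi)(x_0,x_1,x_2,x_3),$$
where $[d,H] = dH + Hd$ by the sign convention for odd $H$ stated in Section~1.3. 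This exhibits the desired homotopy and proves that $\eta$ is a chain homotopy equivalence, hence an equivalence in $\chain$.

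The only technical point I expect to be annoying is the bookkeeping of the shifts $[-1]$ and $[-2]$ across the four summands of $I_X$ and the accompanying signs in the differential and in $[d,H]$; in particular one should double-check that $-dx_3$ in $dH(x_0,x_1,x_2,x_3)$ cancels the $+dx_3$ coming from $Hd(x_0,x_1,x_2,x_3)$. This is not a genuine obstacle: the argument is essentially formal, driven entirely by the fact that $I_X$ differs from $\ker f$ by a contractible direct summand modeled on the cone of an identity map, and $\eta$ is the evident splitting.
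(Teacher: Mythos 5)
Your proposal is correct and is essentially the paper's own proof: the same projection $\pi(x_0,x_1,x_2,x_3)=(x_0,x_1)$ with $\pi\circ\eta=\id$, and the same degree $-1$ homotopy $(x_0,x_1,x_2,x_3)\mapsto(0,0,x_3,0)$ (called $N$ in the paper) realizing $\id_{I_X}-\eta\pi$, with the $dx_3$ cancellation working exactly as you checked.
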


\begin{proof}
We claim we have an inverse chain map $\pi: I \to \ker f$ given by
	\eqnn
		(x_0, x_1, x_2, x_3) \mapsto (x_0, x_1).
	\eqnd
Clearly, $\pi \circ \eta = \id_{\ker f}$. As for $\eta \circ \pi$, one has a homotopy
	\eqnn
		N: I \to I,
		\qquad
		(x_0,x_1,x_2,x_3)
		\mapsto
		(0, 0, x_3, 0).
	\eqnd
For then
	\begin{align}
	dN+Nd(x_0,x_1,x_2,x_3)
	&= d(0,0,x_3,0) + (0,0,Tx_0 -qx_1 + x_2 + dx_3,0) \nonumber \\
	&= (0, 0, Tx_0 - qx_1 +x_2, x_3).\nonumber
	\end{align}
But this is obviously equal to $\id_I - \eta \circ \pi$.
\end{proof}

\begin{lemma}\label{lemma.T1-equivalence-2}
$\eta$ induces an equivalence of diagrams
	\eqn\label{eqn.T1-equivalence-2-prism}
		\xymatrix{
		&&& \ker f \ar[ddlll]^{\sim}_{\eta} \ar[r]^{p} \ar[dr]_{h} & \Xi(L_0)(X) \ar[d]^{f} \ar[ddlll]^{\sim}_{\id} \\
		&&& & \Xi(L_1)(X) \ar[ddlll]^{\sim}_{\id}\\
		I_X \ar[r]^{P} \ar[dr]_{g} & \Xi(L_0)(X) \ar[d]^{f} \\
		& \Xi(L_1)(X) &&&
		}
	\eqnd
where $P: I_X \to \Xi(L_0)(X)$ is the projection $(x_0,x_1,x_2,x_3) \mapsto x_0$.
\end{lemma}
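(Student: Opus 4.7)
The plan is to build the prism $\Delta^2 \times \Delta^1 \to \chain$ in~(\ref{eqn.T1-equivalence-2-prism}) face by face, and then invoke Lemma~\ref{lemma.eta-equivalence} to conclude that the three vertical edges are all equivalences (two of them trivially, and $\eta$ by that lemma).

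First I would pin down the two triangular faces. The top triangle is the standard chain-level kernel diagram~(\ref{eqn.chain-kernel}) applied to $f = \Xi(Y)_X$: the diagonal is $h = f \circ p$ and the lower-left 2-cell is the element $R$ of~(\ref{eqn.R}). The bottom triangle is the one carried by $I_X$, whose lower-left 2-cell is read off from the Floer differential on $CF^*(X \times \gamma \times \gamma^2, B(T_2))$ described in Lemma~\ref{lemma.T2-gamma}, restricted to the summands connecting $I_X$ to $\Xi(L_1)(X)$.

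Next I would handle the three rectangular faces. Two of them commute strictly: since $P\eta(x_0, x_1) = x_0 = p(x_0, x_1)$, the rectangle over $p$/$P$ takes the zero homotopy; and the rectangle with $f$ on both horizontals does as well. The remaining rectangle, relating $h$ and $g$, requires a genuine degree $-1$ element $\kappa$ satisfying $[d, \kappa] = g \circ \eta - h$. Using the explicit formula $\eta(x_0,x_1) = (x_0, x_1, -Tx_0 + qx_1, 0)$ and the description of $g$ on the four summands of $I_X$ read off from Figure~\ref{figure.T2-continuation-2}, I would unwind $g\eta(x_0,x_1)$ and exhibit its difference with $h(x_0,x_1) = f(x_0)$ as a $d$-boundary. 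The key input is the relation~(\ref{eqn.dT}), and the computation runs in parallel with the chain homotopy $N$ used in the proof of Lemma~\ref{lemma.eta-equivalence}.

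Finally, the interior 3-cell is a degree $-2$ element glueing the five faces together; I would construct it from $R$, the bottom 2-cell, $\kappa$, and the two zero homotopies, then verify that the alternating sum of coboundaries matches the prescribed differential on $I_X$. The main obstacle I anticipate is not a conceptual hurdle but the bookkeeping involved in keeping track of signs and degree shifts throughout the construction of $\kappa$ and the interior 3-cell—once those fit together, Lemma~\ref{lemma.eta-equivalence} immediately upgrades the resulting prism to an equivalence of diagrams, and the lemma follows.
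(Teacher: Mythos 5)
Your overall strategy (assemble the prism face by face and then invoke Lemma~\ref{lemma.eta-equivalence} for the vertical equivalences) has the right shape, but you are anticipating homotopical content where the actual point of this lemma is that there is none. Once one reads off from Lemma~\ref{lemma.T2-gamma} that the only component of $g$ out of $I_X$ landing in $\Xi(L_1)(X)$ is the arrow emanating from the $(X,L_0)$ summand (the vertical arrow labelled ``$h$'' there), and that this arrow is $f$, one has $g = f \circ P$ on the nose. Combined with $P \circ \eta = p$ (immediate from the formula for $\eta$) and $h := f \circ p$, every face of the prism commutes strictly: $g \circ \eta = f \circ P \circ \eta = f \circ p = h$ exactly. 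So all 2- and 3-simplices are specified by $0$; there is no degree $-1$ element $\kappa$ to construct, and the relation~(\ref{eqn.dT}) plays no role in this lemma (it is only used to verify that $\eta$ is a chain map). If your unwinding of $g\eta$ produces a nonzero difference $g\eta - h$ in need of a bounding homotopy, you have misread the components of $g$: the maps $q$, $r$, $T$ are internal differentials of $I_X$ and do not contribute to $g$.

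A second concrete correction: the 2-cell you propose for the top triangular face is the wrong one. The element $R$ of~(\ref{eqn.R}) fills the lower-left triangle of the kernel square, i.e.\ it is a homotopy from $f \circ p$ to the zero map through the zero object; that triangle is not a face of the prism~(\ref{eqn.T1-equivalence-2-prism}). The face with vertices $\ker f$, $\Xi(L_0)(X)$, $\Xi(L_1)(X)$ must be filled by a homotopy from $f \circ p$ to $h$, which is the zero homotopy since $h = f \circ p$ by definition; inserting $R$ there does not even satisfy the required boundary condition. In short, the one substantive input is the geometric identification $g = f \circ P$ coming from the collaring argument behind Lemma~\ref{lemma.T2-gamma}, not an algebraic construction of fillers.
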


\begin{proof}
Everything in sight commutes on the nose---all 2- and 3-simplices are specified by 0. That $g = f \circ P$ follows from examining the vertical arrow labeled ``$h$'' in Lemma~\ref{lemma.T2-gamma}.
\end{proof}

\section{Proof of Theorem~\ref{theorem}}
So far we have yielded prisms

\eqnn
	\xymatrix{
		& K(X) \ar[d] \ar[dr] \ar[dl]^{\sim}_{(\ref{eqn.T2-equivalence-prism})} \\
		I_X \ar[dr] \ar[d] & 0 \ar[r] \ar[dl] & (X,L_1) \ar[dl]^{\sim}\\
		0 \ar[r] & (X,L_1)
	}
	\text{and}
	\xymatrix{
		&&K(X) \ar[r] \ar[dr] \ar[dl]^{\sim}_{(\ref{eqn.T1-equivalence-1-prism})}
				& (X,L_0) \ar[d] \ar[dl]^{\sim}\\
		&\ker f\ar[r] \ar[dr]\ar[dl]^{\sim}_{(\ref{eqn.T1-equivalence-2-prism})}
			& (X,L_0) \ar[d] \ar[dl]^{\sim} 
			&	 (X,L_1)\ar[dl]^{\sim}\\
		I_X \ar[r] \ar[dr] & (X,L_0) \ar[d] & (X,L_1)\ar[dl]^{\sim} \\
			& (X,L_1) & &.
	}
\eqnd
If we could glue these together along a common rectangle, we would produce an equivalence~(\ref{eqn.square-equivalence}) as we desire. We show that there is such a common rectangle after a small modification, and glue the diagrams together to obtain:

\begin{lemma}\label{lemma.rectangles-glue}
There is an equivalence of diagrams as in~(\ref{eqn.square-equivalence}).
\end{lemma}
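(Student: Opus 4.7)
An equivalence of square diagrams is a map $\Delta^1\times\Delta^1\times\Delta^1\to\chain$ whose two square faces in the ``square direction'' are the squares appearing in (\ref{eqn.square-equivalence}) and whose four edges in the ``equivalence direction'' are quasi-isomorphisms. I will triangulate the cube $\Delta^1\times\Delta^1\times\Delta^1$ into two prisms $\Delta^2\times\Delta^1$ by cutting along the diagonal rectangle over $K(X)\xra{h'}\Xi(L_1)(X)$, and I will supply the two prisms from the lemmas already proved: the prism over the lower-left triangle will be (\ref{eqn.T2-equivalence-prism}), and the prism over the upper-right triangle will be the stacking, in the equivalence direction, of (\ref{eqn.T1-equivalence-1-prism}) on top of (\ref{eqn.T1-equivalence-2-prism}).

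I would then restrict each of these prisms to its would-be common face: the rectangle whose top edge is $K(X)\xra{h'}\Xi(L_1)(X)$, whose bottom edge is $I_X\xra{g}\Xi(L_1)(X)$, and whose vertical edges are the induced equivalences. From (\ref{eqn.T2-equivalence-prism}) the vertical edges are $\phi_{00}$ and $\phi_{22}$, with the two triangular fillings built from $\phi_{20}$ and $G$. From the stacked composite the vertical edges are $\eta\circ\Phi_{00}$ and $\Phi_{33}$, with fillings assembled from the off-diagonal entries $\Phi_{ij}$, the homotopy $H'$, and the explicit formula for $\eta$ in Section~\ref{section.T2}. These two restrictions do not coincide on the nose, and matching them is the heart of the proof.

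The main obstacle is to exhibit the comparison data between them. First I would produce homotopies $\phi_{00}\simeq\eta\circ\Phi_{00}$ and $\phi_{22}\simeq\Phi_{33}$; each pair of maps is constructed by pairing against the same Hamiltonian suspension $Q$ and both are quasi-isomorphisms between the same source and target, so the directionality arguments of Section~\ref{section.directionality} imply that they represent the same class up to an explicit boundary, yielding the desired degree $-1$ cochains. Second, using these homotopies together with the chain-map identities $d\phi=\phi d$ and $d\Phi=\Phi d$ and the defining relation $[d,T]=qf-r$ for $I_X$, I would write down a cochain in $\hom^{-2}(K(X),\Xi(L_1)(X))$ whose boundary is exactly the difference of the two triangular fillings. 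I then absorb these cochains into the upper prism---this is the ``slight modification of the composite prism'' referred to in the outline---so that the modified upper prism and (\ref{eqn.T2-equivalence-prism}) restrict to the same rectangle on their shared diagonal face.

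Once the two prisms agree on that common face, gluing them along it produces the required map $\Delta^1\times\Delta^1\times\Delta^1\to\chain$. The four vertical edges of the resulting cube are (up to the chosen homotopies) $\phi_{00}$, $\Phi_{22}$, $\phi_{22}$, and the tautological map $\Xi(0)(X)\to 0$; all are quasi-isomorphisms by Lemma~\ref{lemma.phi-equivalence} and its $T_2$-analogue, Lemma~\ref{lemma.eta-equivalence}, and Lemma~\ref{lemma.zero}. This yields the equivalence of diagrams claimed in (\ref{eqn.square-equivalence}).
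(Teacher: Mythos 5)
Your global strategy---cutting the cube into two prisms along the rectangle over $K(X)\xra{h'}\Xi(L_1)(X)$, taking the lower prism to be (\ref{eqn.T2-equivalence-prism}) and the upper prism to be the composite of (\ref{eqn.T1-equivalence-1-prism}) and (\ref{eqn.T1-equivalence-2-prism}), and then matching the two candidate common faces---is exactly the paper's. The gap is in how you match them. You assert homotopies $\phi_{00}\simeq\eta\circ\Phi_{00}$ and $\phi_{22}\simeq\Phi_{33}$ on the grounds that both members of each pair are quasi-isomorphisms with the same source and target, built by pairing against the same suspension, so that directionality makes them ``the same class up to an explicit boundary.'' That inference is not valid: two quasi-isomorphisms between the same complexes need not be chain homotopic, and the directionality arguments of Section~\ref{section.directionality} only constrain the shape (lower-triangularity) of each individual map; they do not compare two different maps. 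Likewise, the cochain in $\hom^{-2}(K(X),\Xi(L_1)(X))$ whose boundary is the difference of the two rectangle fillings is precisely the nontrivial content of the lemma, and you only postulate its existence rather than construct it.

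The paper closes these points with a sharper geometric input that your proposal never invokes: in the regions where the relevant continuation strips are constrained to live, the branes $B(T_1)$ and $B(T_2)$ (and their boundary conditions and perturbation data) are literally identical, so the moduli spaces are in bijection and one gets equalities on the nose, $\phi_{21}=\Phi_{31}$, $\phi_{10}=\Phi_{10}$, and $\phi_{22}=\Phi_{33}$; hence the two rectangle fillings $\phi_{21}h'+\phi_{10}$ and $\Phi_{31}h'+\Phi_{10}$ coincide and no correcting $\hom^{-2}$ cochain is needed. The single genuine mismatch, $\eta\circ\Phi_{00}$ versus $\phi_{00}$, is not repaired by a homotopy at all: one proves the equality of composites $P\circ\eta\circ\Phi_{00}=P\circ\phi_{00}$ (again an identification of strip counts, (\ref{eqn.p-equal})), and since every face of the composite prism adjacent to that edge only sees the edge through the projection $P$ (e.g.\ $g=f\circ P$), one may simply replace the edge by $\phi_{00}$; the modified prism then agrees with (\ref{eqn.T2-equivalence-prism}) on the nose along the common rectangle, and the two prisms glue. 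Without either these exact moduli identifications or an actual construction of your comparison homotopies and the degree $-2$ correction, your argument does not go through as written.
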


\begin{proof}
Recall that the interior of the face in (\ref{eqn.T2-equivalence-prism}) is given by the element 
	\eqnn
	\phi_{21}h' + \phi_{10} \in \hom^{-1}(K(X), (X,L_1)).
	\eqnd
Importantly, note that by definition, the component $\phi_{21}$ counts strips whose $E$ coordinate is constant and $<<0$, whose $F_2$ coordinate is constant and $>>0$, and whose $F_1$ coordinate varies. But along this region, $B(T_2)$ is identical to $B(T_1)$. Which is to say, the operation $\phi_{21}$ is {\em equal} to the operation $\Phi_{31}$---these two count continuation strips constrained to a region in which the pseudoholomorphic strip equations and their boundary conditions are identical, hence their moduli of solutions are in bijection.

By similar reasoning, the operations $\phi_{10}$ and $\Phi_{10}$ are also equal---this is because $\phi_{10}$ and $\Phi_{10}$ count precisely those strips constrained to the face $F_1 <<0$. Again, in this region, the pseudoholomorphic strip equations and their boundary conditions are equal, so their moduli of solutions are in bijection.

What we thus conclude is that the element $\phi_{21}h' + \phi_{10}$ is equal to the element 
	\eqnn
		\Phi_{21}h' + \Phi_{10}.
	\eqnd
There is one issue remaining: The composite face from $(\ref{eqn.T1-equivalence-2-prism})\circ(\ref{eqn.T1-equivalence-1-prism})$ does not have {\em edges} which are equal to the edges in~(\ref{eqn.T2-equivalence-prism}). There is exactly one problem edge: The composite morphism $\eta \circ \Phi_{00}$ is not equal to $\phi_{00}$. (The edge from $(X,L_i)$ to itself is identical for both faces, again by a collaring argument as above.)

However, note that if one composes with the projection map 
	\eqnn
	P : I_X \to \Xi(L_0)(X),
	\qquad
	(x_0,x_1,x_2,x_3) \mapsto x_0
	\eqnd
one sees that
	\eqn\label{eqn.p-equal}
		P \circ \eta \circ \Phi_{00} = P \circ \phi_{00}.
	\eqnd
To explain this, coordinatize so that $E = \{x\}$, $F_1 = \{q_1\}$, and $F_2 = \{q_2\}$. By definition, $\phi_{00}$ counts continuation strips constrained to where $q_1<<0$ is constant, and where $q_2<<0$ is also constant. ($P \circ \phi_{00}$ counts those strips additionally ending up at where $x_1<<0$.) $\Phi_{00}$ counts such strips along with other continuation strips, but $P$ picks out only those strips satisfying $q_1,q_2<<0$ (and in fact, with $x_1 <<0$ as well). The upshot is that, in this region, the defining cobordisms for $\Phi_{00}$ and $\phi_{00}$ are identical, so one once again obtains a bijection of moduli spaces.

What~(\ref{eqn.p-equal}) tells us is that the one can modify the composite prism by simply changing the edge from $K(X)$ to $I_X$ by replacing $\eta\circ\Phi_{00}$ with $\phi_{00}$:
	\eqn
		\xymatrix{
		&&& K(X) \ar[ddlll]^{\sim}_{\phi_{00}} \ar[r]^{p'} \ar[dr]_{h'} & \Xi(L_0)(X) \ar[d]^{f'} \ar[ddlll]^{\sim}_{\Phi_{22}} \\
		&&& & \Xi(L_1)(X) \ar[ddlll]^{\sim}_{\Phi_{33}}\\
		I_X \ar[r]^P \ar[dr]_g & \Xi(L_0)(X) \ar[d]^{f} \\
		& \Xi(L_1)(X) &&&.
		}
	\eqnd
This prism's bottom face is now identical to the top face of~(\ref{eqn.T2-equivalence}), so one can glue them together to finally obtain the equivalence of diagrams $(\Delta^1 \times \Delta^1) \times \Delta^1 \to \chain$.
\end{proof}

Now consider the resulting square
	\eqn\label{eqn.I-square}
		\xymatrix{
		I_X \ar[r]^P \ar[dr]^g_G \ar[d] & \Xi(L_0)(X) \ar[d]^f \\
		0 \ar[r] & \Xi(L_1)(X).
		}
	\eqnd

\begin{lemma}\label{lemma.I-pullback}
(\ref{eqn.I-square}) is a pullback diagram in $\chain$.
\end{lemma}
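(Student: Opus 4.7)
The plan is to invoke the universal property of the homotopy kernel in $\chain$ (as recalled around equations (\ref{eqn.chain-kernel})--(\ref{eqn.R}) of Section~\ref{section.recollections}) and verify that the induced comparison map is a quasi-isomorphism. For $f=\Xi(Y)_X$, the square (\ref{eqn.I-square}) is a homotopy pullback if and only if the canonical comparison map
\[
\alpha:I_X\to\ker f,\qquad\alpha(x)=\bigl(P(x),\,G(x)\bigr),
\]
is a quasi-isomorphism. The fact that $\alpha$ is a chain map is nothing more than the homotopy equation $[d,G]=-f\circ P$ witnessed by $G$ filling the lower-left triangle of (\ref{eqn.I-square}).

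To verify that $\alpha$ is a quasi-isomorphism, I would compare it with the projection
\[
\pi:I_X\to\ker f,\qquad(x_0,x_1,x_2,x_3)\mapsto(x_0,x_1),
\]
which was shown in the proof of Lemma~\ref{lemma.eta-equivalence} to be a quasi-isomorphism (with explicit homotopy inverse $\eta$ and contracting homotopy $N$ of the acyclic subcomplex $\ker\pi=\{(0,0,x_2,x_3)\}\subset I_X$). Since the first components of $\alpha$ and $\sigma\circ\pi$ agree on the nose---where $\sigma(x_0,x_1)=(x_0,-x_1)$ is the sign-flip automorphism of $\ker f$---the difference $\alpha-\sigma\pi$ is supported in the second coordinate, giving $(0,\,G(x)+x_1)$. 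Both $G$ and the $x_1$-projection satisfy the same homotopy equation $[d,\cdot]=\pm f\circ P$ as witnesses to filling the same triangle, so $G+x_1$ is a $d$-cocycle; what remains is to exhibit it as a coboundary.

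For this I would construct a degree $-1$ null-homotopy $H:I_X\to\ker f$ of the form $(0,H_2)$, using the relation $[d,T]=qf-r$ from (\ref{eqn.dT}) together with the contraction $N$. The existence of such $H$ reflects the algebraic consistency of $I_X$: the cross-terms $T$, $q$, $r$ are present in the differential precisely so that $d^2=0$, and these same terms organize into a null-homotopy via a straightforward ansatz combining projections onto the $x_2$ and $x_3$ slots. The main obstacle will be the bookkeeping: the explicit geometric form of $G=G_0\oplus G_2$ from Lemma~\ref{lemma.T2-gamma} must be traced through the universal-property computation so that the Floer-theoretic strip counts match the algebraic coboundary produced by $T$, $q$, and $N$. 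Once $H$ is in hand, $\alpha\simeq\sigma\pi$ as chain maps, so $\alpha$ is a quasi-isomorphism and the universal property yields the lemma.
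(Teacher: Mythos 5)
Your opening reduction is fine: (\ref{eqn.I-square}) is a pullback precisely when the comparison map to $\ker f$ furnished by the universal property is an equivalence, and this is a legitimate alternative to the paper's route (the paper instead lifts $\eta$ to a morphism $\underline{\ker f}\to\underline{I_X}$ over the cospan, via the two $3$-simplices $\widetilde T_1,\widetilde T_2$, and uses that the kernel cone is terminal, so that an equivalence on apexes suffices). But the execution has a sign-level error and, more seriously, a genuine gap. The sign issue: $(x_0,x_1)\mapsto(x_0,-x_1)$ is \emph{not} a chain automorphism of $\ker f$, because $d(x_0,x_1)=(dx_0,\,f x_0 - dx_1)$ has the cross-term $f x_0$; so ``$\sigma\pi$'' is not a chain map and the assertion that $\alpha-\sigma\pi$ is a $d$-cocycle does not parse. (Relatedly, $(P,G)$ is a chain map only if $[d,G]=+f\circ P$; with the paper's filler convention $[d,G]=-g$ the comparison map is $(P,-G)$, and the correct comparison is then with $\pi$ itself, with no sign flip.)

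The essential gap is the unconstructed null-homotopy $H$. Its existence is not a formal consequence of the relations you invoke ($[d,G]=\pm g$, $[d,T]=qf-r$, the contraction $N$): whether (\ref{eqn.I-square}) is a pullback depends on \emph{which} homotopy fills the lower-left triangle, not merely on the fact that $I_X\simeq\ker f$. Indeed, replacing $G$ by another admissible filler $G'$ with the same commutator changes the comparison map by the chain map $(0,G-G')$, and for suitable $G'$ the square fails to be a pullback even though $\pi$ and $\eta$ remain inverse equivalences (already when $f=0$ one may take $G'=0$, and then $(P,0)$ misses the $\Xi(L_1)(X)[-1]$ factor in cohomology). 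So any proof must use the specific Floer-theoretic $G=G_0\oplus G_2$ of Lemma~\ref{lemma.T2-gamma}: concretely one needs $G\circ\eta$ to agree with $R$ (equivalently, $G$ with the $x_1$-projection) up to a specified homotopy, and this comes from collaring/strip-count identifications of the same kind as in Lemmas~\ref{lemma.T2-gamma} and~\ref{lemma.rectangles-glue}, not from the internal algebra of $I_X$ alone---in particular, a ``straightforward ansatz combining projections onto the $x_2$ and $x_3$ slots'' cannot see $G_0$ or $G_2$ at all. This compatibility is exactly what the paper's $\widetilde T_1,\widetilde T_2$ (together with the on-the-nose identities $P\eta=p$, $g=f\circ P$, $h=g\eta$) are engineered to record; in your write-up it is deferred to ``bookkeeping,'' which is precisely where the content of the lemma lives.
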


\begin{proof}
It suffices to prove that, if one pulls back~(\ref{eqn.I-square}) by $\eta$, one obtains the usual fiber square for $f$. So consider the two 3-simplices induced by $\eta$:
	\eqnn
		\xymatrix{
			\ker f \ar[ddr]_{\eta} \ar[ddrr]^{p} \ar[dddrr] \\
			\, \\
			& I_X \ar[dr]_g \ar[r] & \Xi(L_0)(X) \ar[d]^f \\
			&& \Xi(L_1)(X).
		}
		\qquad
		\xymatrix{
			\ker f \ar[ddr] \ar[dddr] \ar[dddrr]^h \\
			\, 			\\
			& I_X \ar[dr] \ar[d] \\
			&0 \ar[r] & \Xi(L_1)(X).
		}
	\eqnd
We call the lefthand 3-simplex $\widetilde T_1$, and the righthand 3-simplex $\widetilde T_2$. Their faces are exhibited by
\begin{itemize}
	\item $d_0 \widetilde T_1 = 0$, as $f \circ P = g$ (as we mentioned in Lemma~\ref{lemma.T1-equivalence-2}).
	\item $d_3 \widetilde T_1 = 0$, as $P \circ \eta = p$ by (the purely algebraic) definition of each map.
	\item $d_2 \widetilde T_1 = 0$, as $fp = fP\eta = g\eta$. 
	\item $d_1 \widetilde T_1 = 0$, as $fp = h$.
\end{itemize}
Of course, one can fill the 3-simplex by setting $\widetilde T_1 = 0 \in \hom^{-2}(\ker f, \Xi(L_1)(X)).$

As for $\widetilde T_2$, the faces are exhibited by
\begin{itemize}
	\item $d_0 \widetilde T_2 = G$, as $G$ exhibits the homotopy between 0 and $g$.
	\item $d_3 \widetilde T_2 = G\eta-R$. This exhibits the composite homotopy $0\eta \sim g\eta = h \sim 0$.
	\item $d_2 \widetilde T_2 = 0$, as $h=g\eta$; this also ensures that one can glue $\widetilde T_2$ to $\widetilde T_1$. 
	\item $d_1 \widetilde T_2 = R$.
\end{itemize}
Since the alternating sum of the induces faces is $G\eta - R + 0 - (G\eta - R) = 0$, this can be filled by the trivial 3-simplex.

Importantly, $d_1$ of each 3-simplex recovers the 2-simplices $0$ and $R$ defining the kernel diagram.

In short, what we have proven is the following. Consider the $\infty$-category $\sD$ of objects in $\chain$ living over the diagram 
	\eqnn
	\xymatrix{
		& \Xi(L_0)(X) \ar[d]^f \\
		0 \ar[r] & \Xi(L_1)(X).
	}
	\eqnd
We let $\underline{I_X}$ denote the object of $\sD$ exhibited by~(\ref{eqn.I-square}), and we let $\underline{\ker f}$ denote the object exhibited by~(\ref{eqn.chain-kernel}).

The proof of the present lemma shows so far that $\eta$ induces a map $\underline{\eta}: \underline{\ker f} \to \underline{I_X}$. And we know from Lemma~\ref{lemma.eta-equivalence} that $\eta$ is an equivalence; hence $\underline{\eta}$ is an equivalence in $\sD$. This completes the proof.
\end{proof}

\bibliographystyle{amsalpha}
\bibliography{biblio}

\end{document}